\newcommand{\pr}{\mathbb{P}}
\newcommand{\E}{\mathbb{E}}
\renewcommand{\d}{\mathrm{d}}
\newcommand{\W}{\mathbb{W}}
\newcommand{\1}[1]{{\mathds{1}}_{\left\{#1\right\}}}
\newtheorem{theorem}{Theorem}
\newtheorem{remark}{Remark}
\newtheorem{lemma}{Lemma}
\newtheorem*{lemma*}{Lemma}
\newtheorem{prop}{Proposition}
\newtheorem{corollary}{Corollary}
\newtheorem{heuristics}{Heuristics}
\newtheorem{claim}{Claim}
\newcommand{\rem}[1]{{#1}}
\author{
	D\'aniel Keliger\\
	{\small Department of Stochastics, Institute of Mathematics,}\\
	{\small Budapest University of Technology and Economics}\\
	{\small M\H{u}egyetem rkp. 3., H-1111 Budapest, Hungary;
	}\\
	{\small Alfr\'ed R\'enyi Institute of Mathematics, Budapest, Hungary}\\
	{\small e-mail: zunerd@renyi.hu}}
\title{Universality of SIS epidemics starting from small initial conditions
	\footnote{Supported by the ÚNKP-21-1 New National Excellence Program of the Ministry for Innovation and Technology from the source of the National Research, Development and Innovation Fund. 
	\includegraphics[width=0.05\textwidth]{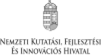}
	\includegraphics[width=0.05\textwidth]{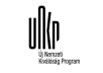}
} 
\footnote{ Partially supported by the ERC Synergy under Grant No. 810115 - DYNASNET.}
}
\begin{document}

\maketitle

\begin{abstract}
	We are investigating deterministic SIS dynamics on large networks starting from only a few infected individuals. Under mild assumptions we show that any two epidemic curves -- on the same network and with the same parameters -- are almost identical up to time translation when initial conditions are small enough, regardless of how infections are distributed at the beginning. The limit object -- an epidemic starting from the infinite past with infinitesimally small prevalence -- is identified as the nontrivial eternal solution connecting the disease  free state with the endemic equilibrium.  Our framework covers several benchmark models including the N-Intertwined Mean Field Approximation (NIMFA) and the Inhomogeneous Mean Field Approximation (IMFA).
\end{abstract}

\section{Introduction}
\label{s:introduction}

As it has been recently shown by the SARS-CoV-2 pandemic, understanding the propagation of diseases is a crucial task for interconnected societies. Making predictions about how much capacities hospitals will need, how effective lockdown measures are, and what is the optimal protocol for distributing vaccines are just few of the questions faced by policy makers.

Such decisions can be supported by epidemiological models where the consequences of certain policies can be simulated without the economic and societal costs of large-scale experimentation. These models usually group individuals into compartments (such as susceptible, infected, recovered, exposed) who make random transitions between such compartments with transition rates depending on the state of individuals being connected to her on a contact network. Examples are the SIS model with two states: \emph{susceptible} (S) and \emph{infected} (I), where infected individuals are cured at a constant rate and susceptible individuals are getting infected at a rate proportional to the number of their infected neighbors. Another example is the SI model, where no curing events happen, making it more suitable to model e.g. the diffusion of information.

Although these models provide flexibility, they have certain shortcomings arising from the high dimensionality of the problem. The exact stochastic model with a population of $n$ individuals has a state space of size $2^n$, making direct calculations infeasible, invoking the need for Monte Carlo simulations or certain reductions. Besides computational limitations, obtaining detailed information about the entire contact network or the complete initial configuration of infections can pose a challenge as well. 

A common way to mitigate the problem of dimensionality is to apply mean field approximations at several scales (and levels of accuracy). Examples from less to more detailed methods are: the Homogenous Mean Field Approximation (HMFA), where it is assumed that the population is well mixed \cite{kurtz70}, the Inhomogenous Mean Field Approximation (IMFA), which keeps track of the degree distribution and assumes individuals are statistically equivalent in the same degree class \cite{vesp} and the N-intertwined Mean Field Approximation (NIMFA) which treats the population at the level of individuals keeping the full contact network and only neglecting dynamical correlations between individuals \cite{NIMFA2011}. One may also apply so-called metapopulation models where people are grouped into smaller communities -- cities, regions etc. \cite{SIS_metapopulation}.

These mean-field models yield ODE systems where each equation describes the evolution of one smaller community or individual. For example, in the HMFA case, only one equation is needed, while NIMFA works with $n$ equations, which is still much less than the $2^n$ equations needed for the exact stochastic process. In this work, we utilize a recent graphon-based approach resulting in a PDE \cite{graphonSISnoise, graphonSIScontrol} which enables studying continuous populations together with all the previously mentioned mean-field approximations.

We start from the observation that real world pandemics usually start from only a few infected individuals, motivating the study of solutions from small but non-zero initial conditions. Clearly, there are some discrepancies between such curves as an epidemic starting from infection ratio $1 \%$ needs less time to saturate than an epidemic starting from $0.01 \%$, resulting in a delay. However, after accounting for this time translation, it turns out that these curves are remarkably close to each other regardless of the initial distribution of infections. We dub this phenomenon the Universality of Small Initial Conditions (USIC).

\begin{figure}[t]
	\centerline{\includegraphics[scale=0.75]{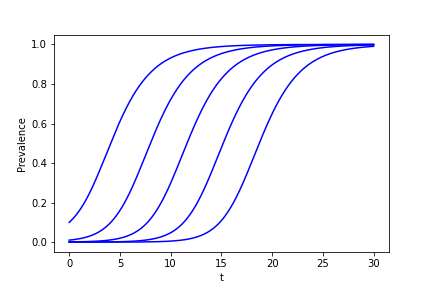}}
	\caption{The ratio of infected individuals for an SI epidemics (parameters: infection rate $\beta=1$, curing rate $\gamma=0$) on a power-law network with parameter $p=0.4$ starting from a ratio of infected individuals $10^{-1}, \dots, 10^{-5}$ at time 0. (For further details, see Section \ref{s:notations}.)}
	\label{fig_curves}
\end{figure} 

Based on USIC, one can also expect a certain limit object to arise. At time $t=0$ set the ratio to be an arbitrary value between $0$ and equilibrium prevalence, say, $10 \%$. If the initial infection is small, it should originate from a large negative $t<0$ value, hence, in the limit there is an epidemic curve starting from the infinite past from infinitesimal amount of infections. We call such limit the \emph{nontrivial eternal solution} --which is unique up to time translation -- as it is defined for all past future values as well.
\begin{figure}[t]
	\centerline{\includegraphics[scale=0.75]{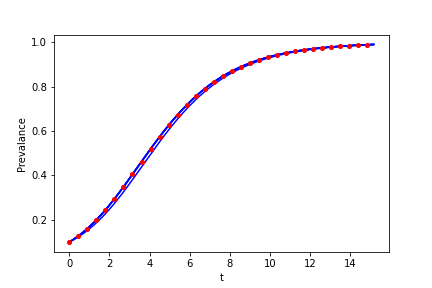}}
	\caption{Solid lines: The epidemic curves from Figure \ref{fig_curves} after a time shift. Dotted line: The eternal solution given by \eqref{eq:SI_explicit1}. }
	\label{fig_translate}
\end{figure} 

Roughly speaking, USIC states that -- under the same network and parameters -- there is only one "relevant" epidemic curve (up to translation), and it is given by the nontrivial eternal solution, which is robust to perturbations of the initial condition. It is worth emphasizing though that the nontrivial eternal solution is only relevant until the underlying parameters: the network, the infection and curing rates remain constant. In real life, this can be violated by imposing lockdown measures, vaccination, mutations and the changing awareness of the population. Thus, we only expect the nontrivial eternal solution to characterize the early phase of the epidemic where assuming constant parameters is plausible.

We will show (see \eqref{eq:small_eternal2}) that the nontrivial eternal solution at the beginning increases exponentially in time and infections are distributed according to the leading eigenvector of the operator corresponding to the linearized PDE \eqref{eq:v} which is nothing but the eigenvector centrality when the graph is finite. It is worth mentioning that the eigenvalue centrality can be estimated from samples when the graph is dense enough \cite{graphonL2conv}.

In summary, at the early stage of the epidemic -- shortly after the leading eigenvector starts dominating the dynamics -- one can assume the parameters to be constant and the prevalence to be small. To make predictions, the full initial configuration is not needed, only the ratio of infected. Furthermore, there is no need for the full contact network, instead, the eigenvector centrality is sufficient, which can be estimated \rem{even when only a sample of the underlying network is known  \cite{graphonL2conv}.} This results in a major reduction of dimensionality and robust predictions.

\rem{
The main idea behind the phenomena is the following: When the number of infections are small the system can be linearized. If we have a positive spectral gap the leading eigenvector $\varphi_1$ will dominate and the solutions will have the form $ce^{\alpha_1 t} \varphi_1$ where $c$ is a constant and $\alpha_1$ is the growth rate. The difference in the constant $c$ for two different initial conditions can be controlled by a time shift after which both solutions have the form $\varepsilon\varphi_1+o(\varepsilon).  $ If we converge to the stationary solution in a uniform fashion, then the we need $O \left( \frac{1}{\varepsilon} \right)$ amount of time during which the error becomes $o \left( \frac{1}{\varepsilon} \cdot \varepsilon \right)=o(1).$

Note that this argument implicitly assumes that the linearization breaks down only \emph{after} the largest eigenvector starts dominating the linear system. This might require some stronger assumptions how well the underlying network is connected.

In this paper we restrict our attention to the SIS model, but we expect the argument above to hold for wider class of models such as SIR or SEIR. 
}

The paper is structured in the following manner: The rest of Section \ref{s:introduction} lists the related works. Section \ref{s:notations} introduces the notion of graphon kernels and the corresponding PDE describing the SIS process on said kernel. Results are stated in Section \ref{s:results}.Proofs are given in Section \ref{s:proof}. Finally Section \ref{s:outlook} gives a brief summary along with possible directions for future research.

\subsection*{Related works}

\rem{Although, some articles articles already discuss phenomena similar to USIC, to the authors knowledge, this is the first work in the setting given in Section \ref{s:notations} and making the connection between USIC and the nontrivial eternal solution. Furthermore, the examples given below only shows USIC-like behavior for global quantities such as the total number of infections while we provide results for local infection densities as well enabling a considerable dimension reduction.

We would also like to mention that while the linearization argument is far from being novel, the fact that linearization breaks down after the principal eigenvector starts dominating and the error remains small in the later phase of the process as well is something that has not been studied before in the epidemic contest to the authors knowledge. 
}

There is a huge body of research for eternal solutions of reaction-diffusion systems. \cite{reaction_eternal} is an example where the eternal solution connects two stationary solutions reminiscent of our setup. For the SIS model on $\mathbb{R}$, \cite{SIS_PDE_on_R} constructs certain eternal solutions. \rem{In this article, instead, we focus on inhomogenous networks and make a connection between eternal solutions and USIC.  }

A diagram similar to Figure \ref{fig_curves} appears in the work of Volz \cite{volz} \rem{at page 20} studying the stochastic SIR process on graphs generated by the configuration model. \rem{The cause of time translation seems to be the stochastic nature of the initial phase which could be approximated be a branching process as in \cite{SIR_USIC, SIR_USIC_Juli} discussed below, rather than due to qualitatively different initial conditions.   }

Volz's approach turns out to be exact in the large graph limit \cite{volzproof,volz_proof_readable}. Furthermore, in Theorem 2.9 of \cite{volz_proof_readable}, the authors show that starting an epidemic with $o(\rem{N})$ infections will converge to an ODE conditioning on large outbreaks. This result implicitly contains USIC as time being translated to start from $s_0\rem{N}$ susceptible vertices after which all (surviving) processes follow the same deterministic dynamics given by the ODE. Note that the role of small initial conditions is relevant as the ODE uses the degree distribution of the susceptible vertices, which is roughly the same as the global degree distribution when there are only $o(\rem{N})$ infections initially.

\cite{SIR_USIC, SIR_USIC_Juli} consider a stochastic SI model on configuration model graphs starting from a single infected individual. A notable difference is that they allow non-Markovian transitions and construct the stochastic time delay obtained from the martingale of the corresponding branching process.

\cite{kiss2020} studies the stochastic SIS process and approximates it by a birth-death process. The number of SI links is estimated for a given number of infected vertices on a ``typical'' trajectory. This is achieved empirically in two scenarios: first, the epidemic starts from one initially infected vertex to get the segment between $0$ and the endemic state (``left side''); in the second scenario, an epidemic starts with everyone being infected to get the segment from the endemic state to $\rem{N}$ (``right side''). We conjecture that the left side of the curve corresponds to the nontrivial eternal solution in the limit $\rem{N} \to \infty$ as it represents a ``typical'' trajectory starting from small initial conditions.

\section{Main concepts and notations}
\label{s:notations}

\subsection{Graphons and kernels}
In this section we describe contact patterns between individuals represented by graphon kernels. For a more detailed overview on graphons see \cite{lovaszbook}. 

Individuals are represented by the variable $x \in [0,1]$. For example, in the finite case, with vertices indexed from $1$ to $n$, individual $i$ can be represented with the value $x=\frac{i}{n}.$

Connections are given by a symmetric kernel $W:[0,1]^2 \mapsto \mathbb{R}^{+}_{0}$  in $L^2 \left([0,1]^2 \right).$ One can interpret $W(x,y)$ as the probability of $x,y \in [0,1]$ being connected, or the strength of their interaction.

We will make use of the following \emph{connectivity property}: For all measurable $A \subseteq [0,1]$ with both $A$ and $A^c$ having positive measure one has
\begin{align}
\label{eq:W_connectivity}
\int_{A} \int_{A^c} W(x,y) \d x \d y>0.
\end{align}

Due to $W \in L^2 \left([0,1]^2 \right) $, the corresponding integral operator
\begin{align*}
\W f(x):=\int_{0}^{1} W(x,y) f(y) \d y
\end{align*}
is a Hilbert-Schmidt operator. If we also assume that $W$ is \emph{irreducible} -- that is, for some integer $r$, the iterated kernel $W^{(r)}$ is positive a.e. -- then there is a leading eigenvalue
\begin{align}
\label{eq:spectral_gap}
\forall i \neq 1 \ \lambda_1>|\lambda_i|
\end{align}
and the corresponding eigenvalue $\varphi_1$ is positive a.e. (see \cite{generalized_Jentzsch}). Also, we can choose an orthonormal eigenvector basis $(\varphi_k)_{k=1}^{\infty}$ and express $W$ as
\begin{align*}
W(x,y)=\sum_{k=1}^{\infty} \lambda_k \varphi_k(x)\varphi_k(y).
\end{align*}

Besides the leading eigenvalue $\lambda_1$, we pay special attention to $\lambda_2,$ the second largest eigenvalue (not the second largest in absolute value). $\lambda_1-\lambda_2>0$ is referred to as the \emph{spectral gap}.

Note that irreducibility and \eqref{eq:W_connectivity} trivially holds when $W$ is uniformly positive, that is $\exists m_0>0 \ : \ W(x,y)\geq m_0$ a.e. In all of our examples we either assume the existence of such $m_0$ or study piece-wise constant $W$ corresponding to a finite, weighted graph (see Section \ref{s:discrete} ). In the later case we assume that said weighted graph is \emph{connected} (or equivalently \eqref{eq:W_connectivity}) under which condition the Perron Frobenius is applicable making all the previous claims true save from \eqref{eq:spectral_gap} which is modified to
\begin{align}
\forall i \neq 1 \ \lambda_1>\lambda_i.
\end{align}

Furthermore, for all of our examples, $\varphi_1$ is uniformly positive, that is
\begin{align}
\label{eq:unifpos}
\exists m>0 \ \forall x \in [0,1] \quad \varphi_1(x) \geq m.
\end{align}

\subsection{Finite and annealed graphs}
\label{s:discrete}
We call the kernel $W$ discrete if there is a non-degenerate finite  partition $(I_i)_{i=1}^{n}$ of $[0,1]$  and non-negative constants $\left(W_{ij}\right)_{i,j=1}^n$ such that
\begin{align*}
W(x,y)=\sum_{i=1}^{n}\sum_{j=1}^n W_{ij} \1{I_i}(x) \1{I_j}(y)	
\end{align*}
meaning, $W$ is piece-wise constant. 

\rem{Note that in metapopulation models we might make a distinction between $N$, the total number of individuals and $n$,the number of subpopulations (say cities). The formalism allows for $N=n$ when one "subpopulation" represents and individual. }

For functions in the form
$$f(x)=\sum_{i=1}^{n} f_i \1{I_i}(x)$$ 
applying the integral operator leads to
\begin{align*}
(x \in I_i) \ \W f(x)=\sum_{j=1}^n W_{ij} \int_{I_j} f(y) \d y=\sum_{j=1}^n W_{ij}|I_j| f_j
\end{align*}
motivating the definition of the (asymmetric) weights $w_{ij}:=W_{ij} |I_j|$ and the matrix $\mathcal{W}:=(w_{ij})_{i,j=1}^{n}.$

We will always assume that $\mathcal{W}$ is connected in the sense that for all $i,j$ there are $k_1,\dots, k_l$ such that $w_{ik_1}w_{k_1k_2} \dots w_{k_{l-1}k_l}w_{k_lj}>0.$  

Setting $I_{i}=]\frac{i-1}{n},\frac{i}{n} ]$ (with $I_1=[0,\frac{1}{n}]$) gives a finite weighted graph with $n$ vertices and weights given by the (now symmetric) matrix $\mathcal{W}.$ In particular, setting $\mathcal{W}=\frac{1}{n}A$ to a (rescaled) adjacency matrix leads to a classical, unweighted graph, with \eqref{eq:W_connectivity} describing its connectivity.

Another interesting case covered is annealed networks \cite{annealed}. Let $k_1,\dots, k_n$ denotes the finite number of possible degrees of a graph. $p(k_i)$ is the ratio of vertices having degree $k_i$, and $p(k_i|k_j)$ refers to the probability of a stub of degree $k_j$ connecting to a stub of degree $k_i$. In the special case of uncorrelated networks, $p(k_i|k_j)=\frac{k_i p(k_i)}{\langle k \rangle}$ where $\langle k \rangle$ is the average degree.

The interval corresponding to the degree class $k_i$ is set to be 
$$I_i=]\sum_{j=1}^{i-1}p(k_{j}),\sum_{j=1}^{i-1}p(k_{j})+p(k_i)]$$
(with $I_1=[0,p(k_1)]).$ Naturally, $|I_i|=p(k_i).$

The weights are $w_{ij}=k_i p(k_j|k_i)$.

Finally, we have to check that $W_{ij}=\frac{w_{ij}}{|I_j|}$ is symmetric so that the weight matrix $\mathcal{W}$ indeed corresponds to a discrete $W$ kernel.
\begin{align*}
W_{ij}=&\frac{k_i p(k_j|k_i)}{p(k_j)}=\langle k \rangle \frac{p(k_i,k_j)}{p(k_i)p(k_j)},
\end{align*}
where $(2-\delta_{k_i,k_j})p(k_i,k_j)$ is the probability that a randomly chosen edge connects two vertices with degree $k_i$ and $k_j$, hence $W_{ij}$ is indeed symmetric. 

In general, $\mathcal{W}$ is symmetric with respect to the scalar product $[f,g]:=\sum_{i=1}^{n}f_i g_i \frac{1}{|I_i|},$ hence all the eigenvalues are real. It is easy to see that for any $\phi$ eigenvector of $\mathcal{W},$ $\varphi(x)=\sum_{i=1}^n \phi_i \1{I_i}(x)$ is an eigenvector of $\W.$ The other way round, $\phi_i:=\frac{1}{|I_i|}\int_{I_i} \varphi(x) \d x$ also makes $\phi$ an eigenvector. In particular, the Perron–Frobenius vector $\phi^{(1)}$ of $\mathcal{W}$ corresponds to $\varphi_1(x)=\sum_{i=1}^n \phi_i^{(1)} \1{I_i}(x),$ making $\varphi_1$ uniformly positive with $m:=\min_{1 \leq i \leq n} \phi_i^{(1)}>0$ in the discrete case.

\subsection{Rank-$1$ kernels}

The kernel $W$ has rank $1$ if it has the form $W(x,y)=\lambda_1 \varphi_1(x)\varphi_1(y).$ Note that uncorrelated annealed graphs are included in this class as $W_{ij}=\frac{k_i k_j}{\langle k \rangle} .$ The parameters are
\begin{align*}
\phi_i^{(1)}=&\frac{1}{\sqrt{\langle k^2 \rangle}}k_i\\
\lambda_1=& \frac{\langle k^2 \rangle}{\langle k \rangle },
\end{align*}
where $\langle k^2 \rangle$ is the second moment of the degrees.

An important example to keep in mind is the rank-$1$ graphon with eigenfunction $\varphi_1(x)=\sqrt{1-2p}x^{-p}:$
\begin{align}
\label{eq:powerlaw}
W(x,y)=\lambda_1 (1-2p)x^{-p}y^{-p} \ \ 0 \leq p < \frac{1}{2}.
\end{align}
The condition $0 \leq p < \frac{1}{2}$ is needed so that $\varphi_1$ is an $L^2\left( [0,1]\right)$ function. 

The corresponding degree function is
\begin{align*}
d(x):= \int_{0}^{1} W(x,y) \d y=\lambda_1 \frac{1-2p}{1-p}x^{-p}	
\end{align*}
hence the ``degree distribution'' of the graphon has a power law decay:
\begin{align*}
\pr\left(d(U) \geq x \right)= \pr \left( U \leq d^{-1}(x) \right)=\left(\lambda_1 \frac{1-2p}{1-p} \right)^{\frac{1}{p}}x^{-\frac{1}{p}},
\end{align*}
fore large enough $x$ where $U$ is a uniform random variable on $[0,1].$

Note that in this example, $\varphi_1(x)$ satisfies the uniform positivity assumption \eqref{eq:unifpos} with $m=\sqrt{1-2p}.$

\subsection{The PDE dynamics}

The (deterministic) SIS process on the graphon kernel $W$ is described by the PDE
\begin{align}
\label{eq:u}
\partial_t u=\beta(1-u)\W u-\gamma u.
\end{align}
with parameters $\beta>0, \gamma \geq 0$. 

\rem{\cite{SISdyn} studies the basic properties of a PDE describing the SIS dynamics on general community structures of which \eqref{eq:u} is a special case. The ideas presented there are heavily utilized in our proofs. Such systems were also studied in \cite{graphongame}. Furthermore, it is know to be the limit of the exact, stochastic SIS process on a sequence of dense enough graphs \cite{Delmas2023, Kuehn2022,Keliger2022}.  }

Here, $u(t,x)$ corresponds to the probability that individual $x \in [0,1]$ being infected at time $t,$ or, in the case of metapopulation models, the ratio of infected within population $x$. Recovery happens at a constant rate $\gamma$, and a susceptible individual $x$ becomes infected at rate
$$ \beta \W u(t,x)=\beta \int_{0}^{1} W(x,y)u(t,y) \d y, $$
where the integral corresponds to the number of its infected neighbours.

When $\gamma>0$, one can set $\gamma=1$ via time change. The $\gamma=0$ case corresponds to the SI process, where no curing is allowed. The SI model is more suitable to describe the diffusion of information rather than viral infections.

Although \eqref{eq:u} describes a deterministic dynamics, it has plenty of connections to the stochastic SIS process.

\rem{When $W$ is \emph{discrete}, \eqref{eq:u} gives back certain well-known mean field approximations:
\begin{align}
	\label{eq:z}
	\frac{\d}{\d t}z_i(t)=\beta(1-z_i(t))\sum_{j=1}^{n}w_{ij}z_j(t)-\gamma z_j(t)
\end{align} 
referred to as  NIMFA or quenched mean field approximation in the literature. 

The relationship between \eqref{eq:u} and \eqref{eq:z} is the following. If $u(t,x)$ is the solution of \eqref{eq:u} then	
\begin{align}
\label{eq:u_and_z}
z_i(t):=\frac{1}{|I_i|} \int_{I_i}u(t,y) \d y
\end{align}
solves \eqref{eq:z}.

On the other hand, if $(z_i(t))_{i=1}^n$ solves \eqref{eq:z} then
\begin{align}
\label{eq:z_and_u}
u(t,x):=\sum_{i =1}^n z_i(t) \1{I_i}(x)
\end{align}
gives a solution to \eqref{eq:u}.

Note that this kind of projection loses some local information regarding the infections within the sets $I_i$, however, this information is either something we do not care about or can not measure in the case of metapopulation models, or does not have any physical interpretation in the case of finite (potentially weighted) graphs where the sets $I_i$ represents individuals. Hence, we only care about solutions which are constant on $I_i$ of the form \eqref{eq:z_and_u}.

(Such reductions are only approximate when $W$ is continuous instead of blockwise constant.) 
}
It has been shown that NIMFA gives an upper bound on the exact infection probabilities \cite{simon2017NIMFA,Mieghem2014}, and for large average degrees, the error can be arbitrarily small \cite{Sridhar_Kar1, Sridhar_Kar2,NIMFA_sajat} \rem{even in the $N=n$ case when $z_i(t)$ represents infection probabilities of individuals rather than the ratio of infections within a subpopulation.}

When $n=1$, without loss of generality, one can set $w_{11}=1$, deriving HMFA:
\begin{align*}
\frac{\d}{\d t} z(t)=\beta(1-z(t))z(t)-\gamma z(t),
\end{align*}
which is shown to be the exact limit of the prevalence on complete graphs \cite{kurtz70}.

To get IMFA \cite{vesp} one must use annealed networks with weights $w_{ij}=k_ip(k_j|k_i).$
\begin{align}
\label{eq:IMFA}
\begin{split}
\frac{\d}{\d t}z_i(t)=&\beta k_i (1-z_i(t))\Theta_i(t)-\gamma z_i(t)\\
\Theta_i(t)=& \sum_{j=1}^{n} p(k_j|k_i)z_j(t)
\end{split}
\end{align} 

Smooth kernels $W$ can also arise as the limit of the discrete system \eqref{eq:z} when the underlying graph converges to $W$ in the graphon sense \cite{lovaszbook,graphonL2conv}. In certain special cases with diverging average degree, the stochastic SIS model converges to \eqref{eq:u}, making \eqref{eq:u} an exact limit, not just an approximation \cite{gl}.

Not all solutions of \eqref{eq:u} are physically relevant as $u(t,x)$ must retain a probabilistic interpretation. Hence we restrict our attention to the domain
\begin{align}
\label{eq:Delta}
\Delta:=\{ \left. f \in L^2([0,1]) \right| 0 \leq f(x) \leq 1 \ a.e. \}.
\end{align}

\begin{prop}
\label{t:PDE_basics}
Assume $0 \leq W \in L^2\left([0,1]^2\right)$ and $u_0 \in \Delta.$ Then among the solution to \eqref{eq:u} satisfying $\forall t \in \mathbb{R}_{0}^{+} \ u(t) \in \Delta$ there is is a unique $u(t)$ with $u(0)=u_0$. 
\end{prop}
\rem{The proof can be found in Section \ref{s:PDE_proofs}.}

Proposition \ref{t:PDE_basics} has already been proven in \cite[Proposition 2.9. (ii)]{SISdyn} under the assumption that $W$ is bounded -- in which case some further smoothness properties also follow.Extending it to square-integrable $W$ is relatively straightforward using the following approximation result:
\begin{prop}
\label{t:approx}
Let $0 \leq W_1,W_2 \in L^2\left([0,1]^2\right)$ such that $\|W_1 \|_2, \| W_2\|_2 \leq \lambda $ for some $\lambda \geq 0$. Let $u_1,u_2$ be two solutions of \eqref{eq:u}  such that $\forall t \in \mathbb{R}_{0}^{+} \ u(t) \in \Delta.$

Then 
\begin{align}
\label{eq:approx}
\sup_{0 \leq t \leq T} \|u_1(t)-u_2(t)\|_2 =O\left(\|u_1(0)-u_2(0) \|_2+\|W_1-W_2 \|_2 \right)
\end{align}
with constants in the $O(\cdot)$ notation depending only on $T$ and $\lambda$.
\end{prop}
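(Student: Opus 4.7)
The plan is a Gr\"onwall estimate in $L^2$ for $v(t) := u_1(t) - u_2(t)$. A preliminary note on the statement: the hypothesis ``$0 \geq W_1, W_2$'' appears to be a typographical inversion of the standing assumption $W:[0,1]^2 \to \mathbb{R}^{+}_{0}$ used throughout the paper, and I will read it as $W_1, W_2 \geq 0$. This positivity is not a matter of convenience but a structural necessity for the argument below: it gives one quadratic term a favorable sign, and that is precisely what allows the final constants to depend only on $\|W\|_2 \leq \lambda$ rather than on an (unavailable) $\|W\|_\infty$.

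Subtracting the two PDEs and algebraically decomposing
\begin{equation*}
(1-u_1)\mathbb{W}_1 u_1 - (1-u_2)\mathbb{W}_2 u_2 \;=\; (1-u_1)\,\mathbb{W}_1 v \;+\; (1-u_1)(\mathbb{W}_1 - \mathbb{W}_2)u_2 \;-\; v\cdot \mathbb{W}_2 u_2
\end{equation*}
separates the $v$-dependence from the $W_1-W_2$ dependence. Pairing with $v$ in $L^2$, using $u_i \in \Delta$ (so $\|1-u_1\|_\infty \leq 1$ and $\|u_2\|_2 \leq 1$) together with the Hilbert-Schmidt bound $\|\mathbb{W}_i\|_{\mathrm{op}} \leq \|W_i\|_2 \leq \lambda$, and Cauchy-Schwarz on the first two pieces, one obtains
\begin{equation*}
\tfrac{1}{2}\tfrac{d}{dt}\|v\|_2^2 \;\leq\; \beta\lambda\|v\|_2^2 \;+\; \beta\|W_1-W_2\|_2\,\|v\|_2 \;-\; \beta\int v^2\, \mathbb{W}_2 u_2\,\mathrm{d}x \;-\; \gamma\|v\|_2^2.
\end{equation*}

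The delicate piece is $-\beta\int v^2\, \mathbb{W}_2 u_2\,\mathrm{d}x$: a direct estimate yields only $\|v\|_\infty\|v\|_2\|\mathbb{W}_2 u_2\|_2 \leq \lambda \|v\|_2$, which is merely linear in $\|v\|_2$ and cannot close a Gr\"onwall loop. Here the sign does the work: $W_2\geq 0$ and $u_2\geq 0$ give $\mathbb{W}_2 u_2 \geq 0$ pointwise, so the integral is nonnegative and the entire term may be discarded. What remains is $\frac{d}{dt}\|v\|_2^2 \leq C\|v\|_2^2 + C\|W_1-W_2\|_2\,\|v\|_2$ with $C$ depending only on $\beta, \gamma, \lambda$, which integrates (after, for example, absorbing the cross term via $ab \leq \tfrac{1}{2}(a^2+b^2)$) to \eqref{eq:approx} uniformly on $[0,T]$. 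For $W$ in $L^2$ but not bounded, one runs the argument first on bounded truncations, where Proposition \ref{t:PDE_basics} supplies $\Delta$-invariance, and extends to the $L^2$ kernel by the very Cauchy property being proved. The chief obstacle is exactly the $\int v^2 \mathbb{W}_2 u_2\,\mathrm{d}x$ integral: without the sign $W\geq 0$ there is no way to remove an $\|W\|_\infty$ dependence from the constants, which is also the sense in which the stated hypothesis cannot be taken with the opposite sign.
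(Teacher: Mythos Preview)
Your argument is correct, and it takes a genuinely different route from the paper's. The paper does not pair with $v$ and differentiate $\|v\|_2^2$; instead it bounds the right-hand side directly in $L^2$ norm, claiming a Lipschitz estimate
\[
\|\mathcal{G}[f_1,W_1]-\mathcal{G}[f_2,W_2]\|_2 \;\leq\; \beta\|W_1-W_2\|_2 + (2\beta\lambda+\gamma)\|f_1-f_2\|_2
\]
and then applies the integral form of Gr\"onwall. The decomposition is essentially the same as yours, but the term you single out as delicate, $v\cdot\mathbb{W}_2 u_2$, is handled by the paper via the inequality $\|(f_1-f_2)\,\mathbb{W}_2 f_2\|_2 \leq \|\mathbb{W}_2 f_2\|_2\,\|f_1-f_2\|_2$, which as written requires $\mathbb{W}_2 f_2 \in L^\infty$ --- not available for a merely $L^2$ kernel. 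Your energy approach sidesteps this entirely: after pairing with $v$ the same term becomes $\int v^2\,\mathbb{W}_2 u_2\,\mathrm{d}x \geq 0$, and the sign (rather than any size estimate) disposes of it. This is exactly what makes the constants depend only on $\|W\|_2$, and your proof makes that dependence more transparent than the paper's. Your reading of the typo $0 \geq W_i$ as $W_i \geq 0$ is also the right call, and your explanation of why the sign is structurally necessary is on point. The closing remark about truncations is not actually needed for your argument (the energy estimate already works at the $L^2$ level), but it does no harm.
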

\rem{The proof can be found in Section \ref{s:PDE_proofs}.}

\rem{
\begin{remark}
\label{r:discrete}
An easy consequence of Proposition \ref{t:approx} is that we can approximate \eqref{eq:u} with the finite ODE system \eqref{eq:z}.

Let $W_n$ be a discrete approximation of $W$. $u_n,u(t)$ are the solution of \eqref{eq:u} for the graphons $W_n,W$ respectively with initial conditions $u_n(0)=u(0)$. Clearly,
$$\sup_{0 \leq t \leq T} \|u_n(t)-u(t) \|_2=O \left(\|W-W_n\|_2 \right).$$

We can choose a sequence $W_n \overset{L^2([0,1])}{ \to} W$ as simple functions are dense in $L^2([0,1])$ and measurable sets can be approximated by rectangles.

When $W$ is continuous one can take $$W_n(x,y)=\sum_{i,j=1}^n W_{ij}^{(n)}\1{I_i^{(n)}}(x)\1{I_j^{(n)}}(y)$$
where $W_{ij}^{(n)}=W \left(\frac{i}{n},\frac{j}{n}\right) \int_{I_i^{(n)}}\int_{I_j^{(n)}}W(x,y) \d x \d y$. When $W$ is also Lipschitz continuous we further attain $\|W-W_n\|_2 =O \left( \max_{1 \leq i \leq n} \left|I_i^{(n)}\right| \right).$    

\end{remark}
}
Note that  $u(t) \in \Delta$ is only required for $t\geq 0$, and it might not be true for past times $t<0$. For example, let $A \subseteq [0,1]$ be such that both $A,A^c$ have positive measure and $u(0,x)=\1{A^c}(x).$ Then, based on \eqref{eq:W_connectivity},
\begin{align*}
\left.\frac{\d}{\d t} \int_{A}u(t,x) \d x \right |_{t=0}=\beta \int_{A} \W u(0,x) \d x=\beta \int_{A} \int_{A^c} W(x,y) \d y \d x>0
\end{align*}
making $\int_{A}u(-t,x) \d x<0$ for some small $t > 0.$

We call a solution $u(t)$ \emph{eternal} if it satisfies
$$u(t) \in \Delta\quad \forall t \in \mathbb{R}.$$

Obviously, stationary solutions are always eternal. In the supercritical case ($\beta \lambda_1>\gamma$, see Section \ref{s:linear}) for bounded $W$ there are two such stationary solutions \cite[Proposition 4.13.]{SISdyn}: the unstable disease free state $u(t) \equiv 0$ and a nonzero endemic state $\psi$ satisfying
\begin{align}
\label{eq:psi}
\beta(1-\psi)\W \psi=\gamma \psi.
\end{align}
$\psi$ can be also expressed in the form
\begin{align*}
	\psi(x)=\frac{\beta \W \psi(x)}{\gamma+\beta \W \psi(x)}.
\end{align*}

\begin{remark}
When $W$ is rank-$1$, the nonzero solution of \eqref{eq:psi} is
\begin{align*}
\psi(x)=&\frac{\beta \lambda_1 c \varphi_1(x)}{\gamma+ \beta \lambda_1 c \varphi_1(x)}\\
0<c:=&\langle  \varphi_1, \psi \rangle.
\end{align*}
Note that $c$ must solve
\begin{align*}
	1=& \int_{0}^{1} \frac{\varphi_1^2(x)}{\frac{\gamma}{\beta \lambda_1 }+c\varphi_1(x)} \d x.
\end{align*}

Hence the nontrivial solution of \eqref{eq:psi} is unique for rank-$1$ graphons too with $\psi \in \Delta$. Later, in Lemmas \ref{l:convergence_to_equilibrium} and \ref{l:epsilon_0_tilde} we will see that $u(t) \to \psi$ holds in this slightly more general case as well when $u \not \equiv 0.$ 
\end{remark}

When $\gamma>0$ we will use the function
\begin{align}
	\label{eq:pi}
	\pi(x):= \left( 1-\psi(x)\right)^{-1}=1+\frac{\beta}{\gamma} \W \psi(x).
\end{align}
Clearly,
\begin{align*}
	1 \leq \pi(x) \leq 1+\frac{\beta}{\gamma} \int_{0}^{1} W(x,y) \d y
\end{align*}
in particular the $\|\pi \|_1 < \infty$.

The weighted $L^2$ space $L^2_{\pi}([0,1])$ includes measurable functions such that
\begin{align}
\label{eq:pi_norm}
\|f \|_{\pi}^2:= \int_{0}^{1} \pi(x) f^2(x) \d x<\infty. 
\end{align} 
Clearly, bounded functions are included in $L^2_{\pi}([0,1])$. Note that since $1 \leq \pi$
\begin{align*}
 \| f\|_2 \leq \|f \|_{\pi}.
\end{align*}

We call an eternal solution $u(t)$ \emph{nontrivial} if it is neither the disease free, nor the endemic state.

When considering discrete $W$, we have to further restrict the physically viable solutions to
\begin{align}
	\label{eq:Delta_I}
	\Delta_I:=\{ \left. f \in \Delta \right| \forall \ 1 \leq i \leq n \ f|_{I_i} \textit{ is constant.} \}
\end{align}
Note that for discrete $W$, $\Delta_I$ is forward invariant, hence \eqref{eq:u_and_z} \rem{and \eqref{eq:z_and_u}} creates a one-to-one map between the solutions of \eqref{eq:u} and \eqref{eq:v} on the domain $\Delta_{I},$ thus, for discrete $W$, we will work with $\Delta_I$ instead of $\Delta$ in this paper.
 
\subsection{Linearization of the PDE}
\label{s:linear}
The expansion of $u(t)$ by the orthonormal basis $\left(\varphi_k\right)_{k=1}^{\infty}$ is written as
\begin{align}
\label{eq:u_expansion}
u(t)&=\sum_{k=1}^{\infty}c_k(t)\varphi_k\\
\label{eq:c_k}
c_k(t)&:= \langle u(t), \varphi_k \rangle. 
\end{align}

We linearize \eqref{eq:u} around $u=0$ to get
\begin{align}
\label{eq:v}
\begin{split}
\partial_t v&= \mathcal{A}v:=\left(\beta \W-\gamma\mathbb{I} \right)v \\
v(t_0)&=u(t_0).
\end{split}
\end{align}

The initial time $t_0$ will vary in the theorems and proofs, however, it will be clear from the context which version of $v$ we are referring to.

The corresponding expansion of $v(t)$ with respect to $\left( \varphi_k \right)_{k=1}^{\infty}$ is
\begin{align*}
v(t)&=\sum_{k=1}^{\infty}\tilde{c}_k(t)\varphi_k \\
\tilde{c}_k(t)&:= \langle v(t), \varphi_k \rangle.	
\end{align*}

One of the interpretation of \eqref{eq:v} is as the expectation of a Branching random walk. We put a Poisson measure on $[0,1]$ with intensity $v(t_0,x) \d x$. Then an individual at position $x$ creates an offspring on sight $y$ at rate $\beta W(x,y) \d y$ and dies at rate $\gamma$. Then the expected individuals around site $x$ at time $t$ is described by $v(t,x)$. This fact along with Lemma \ref{l:u_and_v} (see Section \ref{s:auxiliary}) implies
\begin{align*}
 \forall t \geq t_0 \quad 0 \leq u(t) \leq v(t).
\end{align*}

Note that $\varphi_k$ are eigenvectors to $\mathcal{A}$ as well with eigenvalues
\begin{align*}
\alpha_k=\beta \lambda_k-\gamma.
\end{align*}

To solution to the linear system \eqref{eq:v} can be written as
\begin{align*}
v(t)=e^{\mathcal{A}(t-t_0)}v(t_0).
\end{align*}
Since $\mathcal{A}$ is bounded, the exponential can be understood as a power series and
\begin{align}
\label{eq:v_expanded}
v(t)&= \sum_{k=1}^{\infty} c_k(t_0)e^{\alpha_k(t-t_0)} \varphi_k. 
\end{align}
Note that $\tilde{c}(t_0)=c_k(t_0).$

It is easy to see that there is a phase transition for $v(t)$ at $\beta_c= \frac{\gamma}{\lambda_1}.$ When $\beta<\beta_c,$ $v(t) \to 0$ for all initial conditions. However, when $\beta>\beta_c,$ the leading term $c_1(t_0)e^{\alpha_1(t-t_0)}\varphi_1$ survives if $c_1(t_0)=\int_{0}^{1} u(t_0,x)\varphi_1(x) \d x>0.$ $\varphi_1>0$ a.e. makes $\varphi_1 \d x$ being equivalent to the Lebesgue measure, therefore, $c_1(t_0)>0$ is equivalent to $\| u(t_0)\|_1>0,$ meaning, we are not considering the disease free epidemic.

From now on, it is assumed that we are in the supercritical case, that is, $\beta \lambda_1>\gamma.$

\section{Results}
\label{s:results}
In this section we present the main results of the article.

\subsection{USIC}

The precise statement for the universality of small initial conditions is the following:
\label{ss:usic}

\begin{theorem} (Main)
\label{t:main}

Assume $W \in L^2([0,1]^2)$ is non-negative along with the dynamics being supercritical: $\beta \lambda_1>\gamma.$

Further assume a), b) or c) holds:
\begin{itemize}
	\item{a)} $W$ is discrete and connected,
	\item{b)} $0<m_0 \leq W(x,y) \leq M$ and $\gamma<\beta \lambda_1<\gamma+2\beta(\lambda_1-\lambda_2), $
	\item{c)} $W$ is rank-$1$, $\varphi_1$ is uniformly positive and in $L^{2+\rho}([0,1])$ for some $\rho>0$.
\end{itemize}

Then, for all $\varepsilon, \eta>0$ there is a $\delta>0$ such that for all $u_1(0),u_2(0) \in \Delta$ (in the discrete $W$ case $u_1(0),u_2(0) \in \Delta_I$) with $0<\| u_1(0)\|_2, \|u_2(0)\|_2 \leq \delta$ there are time shifts $t_1,t_2 \geq 0$ such that
\begin{align}
\label{eq:USIC_main_part}
\sup_{t \geq 0} \|u_1(t+t_1)-u_2(t+t_2) \|_2 \leq \varepsilon,
\end{align}
while
\begin{align}
\label{eq:USIC_second_part}
(i=1,2) \ \sup_{0 \leq t \leq t_i} \| u_i(t)\|_2 \leq \eta.
\end{align}
\end{theorem}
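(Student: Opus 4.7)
The plan is to exploit the fact that for small initial data, the solution of the nonlinear equation \eqref{eq:u} tracks the linearized evolution \eqref{eq:v} for a long time, and the linearized flow aligns every nontrivial orbit with the leading eigenvector $\varphi_1$ at an exponential rate. Thus after the right waiting time, every small-initial-condition orbit passes through essentially the same small multiple of $\varphi_1$, and from there Proposition \ref{t:approx} (applied with $W_1=W_2=W$), together with convergence of all such orbits to the endemic equilibrium $\psi$, propagates the closeness forward for all positive time.

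Concretely, I would fix a small threshold $\eta_0 \leq \eta$ (its precise value to be chosen at the end as a function of $\varepsilon$ and $W$) and set
\begin{align*}
t_i := \inf\{t \geq 0 \,:\, \|u_i(t)\|_2 \geq \eta_0\}.
\end{align*}
Supercriticality $\beta\lambda_1>\gamma$ together with the domination $u_i(t)\leq v_i(t)=e^{\mathcal{A}t}u_i(0)$ and $c_1(0)=\langle u_i(0),\varphi_1\rangle >0$ (a consequence of $\varphi_1>0$ a.e.\ and $\|u_i(0)\|_1>0$) imply that $t_i$ is finite once $\delta$ is small enough, and in fact $t_i \sim \alpha_1^{-1}\log(\eta_0/\|u_i(0)\|_2) \to \infty$ as $\delta\to 0$. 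Since $\|u_i(t)\|_2<\eta_0 \leq \eta$ for $t<t_i$ by construction, bound \eqref{eq:USIC_second_part} is automatic.

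The heart of the argument is the alignment claim: up to an error tending to $0$ with $\delta$, $u_i(t_i)$ equals a universal profile proportional to $\varphi_1$ (and of $L^2$-norm $\eta_0$). To establish it I would first estimate $w_i := v_i-u_i$, which satisfies a linear equation with forcing $\beta u_i \W u_i$; a Gronwall bound yields a quadratic-in-$\eta_0$ control of $\|w_i(t)\|_2$ on $[0,t_i]$. Next, the spectral expansion \eqref{eq:v_expanded} shows that $v_i(t)/(\tilde c_1(0)e^{\alpha_1 t})\to \varphi_1$ because the non-leading terms $\tilde c_k(0)e^{\alpha_k t}\varphi_k$ with $k\geq 2$ become relatively negligible. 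Combining the two yields $u_i(t_i) \approx \eta_0\,\varphi_1/\|\varphi_1\|_2$. The three bullets in the hypothesis correspond to three regimes in which this alignment actually goes through: the discrete connected case reduces to finite-dimensional Perron--Frobenius and is essentially classical; the bounded uniformly positive case needs the spectral-gap hypothesis $\beta\lambda_1-\gamma < 2\beta(\lambda_1-\lambda_2)$ precisely so that the quadratic correction cannot feed the non-leading modes faster than the leading mode grows before the threshold is reached; and the rank-$1$ case is trivial because there are no non-leading modes at all, with the $L^{2+\rho}$ hypothesis entering only to make sense of the nonlinearity $u\,\W u$.

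Finally, to pass from the pointwise-in-time match $u_1(t_1) \approx u_2(t_2)$ to the global bound \eqref{eq:USIC_main_part}, I would split $[0,\infty)$ into $[0,T]$ and $[T,\infty)$: on the bounded window Proposition \ref{t:approx} directly gives $\sup_{t\leq T}\|u_1(t+t_1)-u_2(t+t_2)\|_2 = O(\|u_1(t_1)-u_2(t_2)\|_2)$, while for $t\geq T$ a convergence-to-equilibrium result of the type announced in the remark after \eqref{eq:psi} places both $u_1(T+t_1)$ and $u_2(T+t_2)$ within $\varepsilon/3$ of $\psi$. Choosing $\eta_0$, then $T$, then $\delta$ in this order makes the whole supremum at most $\varepsilon$. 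The main obstacle is exactly the alignment step in the bounded-$W$ case: quantifying how fast $v_i/\|v_i\|_2$ approaches $\varphi_1/\|\varphi_1\|_2$ relative to how fast the quadratic correction $u_i\,\W u_i$ pollutes the non-leading modes is where the spectral-gap hypothesis earns its keep, and I expect this to be where most of the technical work sits.
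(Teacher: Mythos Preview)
Your high-level architecture matches the paper's: show that every small orbit, after some waiting time, passes through a common profile $\approx \eta_0 \varphi_1$; then propagate the closeness by Gr\"onwall on a finite window; then use convergence to $\psi$. But you have missed the central technical obstruction in the alignment step, and your description of two of the three cases is off.

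The problem is that the ratio $\|u(0)\|_2 / c_1(0)$, where $c_1(0)=\langle u(0),\varphi_1\rangle$, is \emph{not} bounded over $\Delta$. Your ``Gr\"onwall bound yields a quadratic-in-$\eta_0$ control of $\|w_i\|_2$'' in fact produces (cf.\ the paper's Lemma~\ref{l:linear_error})
\[
\|u(t)-v(t)\|_2 \;\leq\; \frac{\beta\lambda_1}{\alpha_1}\left(\frac{\|u(0)\|_2}{c_1(0)}\right)^{\!2}\eta_0^2,
\]
and the parenthetical factor can be arbitrarily large (the paper gives the example $u(0)=\delta\,\mathds{1}_{[0,\delta]}$ on the complete graph). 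The same factor appears in the rate at which $v(t)/\|v(t)\|_2$ aligns with $\varphi_1$. So the whole alignment argument is non-uniform in the initial condition, and this is precisely what the three bullets are there to repair. In the discrete case the ratio is bounded a priori by $1/(m^2\min_i|I_i|)$ because $u(0)\in\Delta_I$ (Lemma~\ref{l:discrite_initial_bound}); in the other two cases the paper inserts a preliminary waiting time $\hat t = (\alpha_1-\alpha_2)^{-1}\log(\|u(0)\|_2/c_1(0))$ and proves that the nonlinear correction does not destroy the $\varphi_1$-component during $[0,\hat t]$ (Lemmas~\ref{l:M_t_hat}--\ref{l:bounded_W}). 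Your sketch is silent on this; without it the argument does not close.

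Two specific corrections. First, the rank-$1$ case is not ``trivial because there are no non-leading modes'': the orthogonal complement of $\varphi_1$ is the kernel of $\W$, and initial data can live almost entirely there, so the alignment issue is fully present. Second, the role of $\varphi_1\in L^{2+\rho}$ is not to make sense of $u\,\W u$ (that product is fine for any $u\in\Delta$); it is used, via H\"older with exponents $(1+2/\rho,\,1+\rho/2)$, to bound $\langle u(s),\varphi_1^2\rangle$ in the proof that the $\varphi_1$-component survives the interval $[0,\hat t]$ (Lemma~\ref{l:rank_1}).
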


The proof can be found in Section \ref{s:USIC_proof}.

Note that the c) is satisfied for power law kernels \eqref{eq:powerlaw}.

\eqref{eq:USIC_second_part} requires some explanation.  \eqref{eq:USIC_main_part} without \eqref{eq:USIC_second_part} is meaningless as both $u_1(t)$ and $u_2(t)$ converge to the endemic state as $t \to \infty$, hence, we could choose $t_1,t_2$ to be large enough so that $u_{i}(t_i) \approx \psi$ ($i=1,2$) satisfying the \eqref{eq:USIC_main_part} in a trivial manner. \eqref{eq:USIC_second_part} mitigates this problem by asserting that the solutions are small even after the time shifts, hence they can be close to each other even from an early stage of the epidemic.

\rem{
	In condition b) the condition $\alpha_1=\beta \lambda_1-\gamma< 2 \beta (\lambda_1-\lambda_2)$ ensures we have a large enough spectral gap compared to the growth rate ensuring the domination of the leading eigenvector before the linearization breaks down. We conjecture the condition  $\lambda_1>\lambda_2$ to be enough, however, some complications may arrise when $\varphi_1$ is allowed to be zero as it might be difficult to compare $c_1(0)=\int_{0}^{1} u(0,x) \varphi_1(x) \d x$ and $\|u(0) \|_2$ without Lemma \ref{l:c1_bound}.   
} 

\rem{
\begin{remark}
Not that even in the discrete case the results are mostly suitable for dense enough graphs - with diverging average degrees - generated by stochatic block models or $W$-random graphons. We imagine first fixing $u_1(0),u_2(0)$ and $W$ then letting the number of vertices $N$ to infinity in which case the stochastic dynamics on the graph $G_N$ is described by \eqref{eq:u}.

In this setting, $\delta$ does not depend on $N$, the number of vertices on the finite graph, but it might depend on $n$, the number of subpopulations on the discrete $W$ (say the number of cities in a metapopulation model.) Although, the formalism allows for $N=n$, the distinction is more important when comparing the stochastic and the deterministic models. 

Rigorous study of the dependence of $\delta_n$ on some sequence of discrete $W_n$ is out of the scope of this article, however, we believe $\delta$ might need to be unreasonably small when the discrete graph $W$ is too sparsely connected, limiting the range of applicability. This has a 
\end{remark}
}

\rem{ Using \eqref{eq:v_expanded} we expand on the heuristic given in the Introduction.}
\begin{heuristics}
\label{h:main}
When a solution $u(t)$ is small, it can be approximated by the linear system \eqref{eq:v}. As we can see from \eqref{eq:v_expanded}, the leading eigenvector will dominate after some time and
\begin{align*}
u(t) \approx v(t) \approx c_1(0)e^{\alpha_1 t} \varphi_1.	
\end{align*}
Here, we implicitly assume such dominance happens before the linerized system stops being accurate.

Setting $t_i:=\frac{1}{\alpha_1} \log \left( \frac{\varepsilon}{\langle u_i(0), \varphi_1 \rangle} \right) \ (i=1,2)$ gives
\begin{align*}
u_i(t_i)=\varepsilon \varphi_1+o(\varepsilon).	
\end{align*}

We need $\tau=\frac{1}{\alpha_1}\log \left(\frac{1}{\varepsilon} \right)$ time till we get to a constant level while we expect the error to increase by a factor of $e^{\alpha_1 \tau}=\frac{1}{\varepsilon}$ making $\| u_1(t_1+\tau)\|_2,\|u_2(t_2+\tau) \|_2=\Theta(1)$ while $\|u_1(t_1+\tau)-u_2(t_2+\tau) \|_2=o(1). $

For times $t >\tau$ $u_1(t_1+t),u_2(t_2+t)$ remain close together as both of them converge to $\psi$ as $t\to \infty.$
\end{heuristics}

\rem{
\begin{remark}
Note that based on Heuristics \ref{h:main} we expect the time shift to be
\begin{align}
\label{eq:timeshit}
t_1-t_2=-\frac{1}{\alpha_1} \log \frac{\langle u_1(0),\varphi_1 \rangle}{\langle u_2(0),\varphi_1 \rangle}
\end{align}
which is independent of $\varepsilon$. This is the time shift we are using at Figure \ref{fig_USIC_metapop2} where it seems to work quiet well.
\end{remark}
}
To summarize, when two initial conditions are small enough -- but not identically $0$ -- we can apply an appropriate time shift after which the two solutions remain close to each other.

\subsection{Basic properties of eternal solutions}
The proofs for the statements of this section can be found in Section \ref{s:proof_eternal}.

Consider the following heuristics.

\begin{heuristics}
Take a sequence of initial conditions $u_n(-t_n) \to 0$ with $-t_n \to -\infty$ and at time $t=0$ take some intermediate value between the disease free and the endemic state, say  $\|u_n(0)\|_1=\frac{1}{2}\| \psi \|_1.$

Since $u_n(-t_n)$ is getting smaller and smaller, USIC suggest that the solutions $u_n(t)$ are becoming more and more similar to each other, hence there should be a limit $u_n \to u$ by the Cauchy-argument. This limit should satisfy $\lim_{t \to -\infty}u(t)=0, \ \lim_{t \to \infty}u(t)=\psi$ while $u(t) \in \Delta$ for all $t \in \mathbb{R}.$ Since $\|u(0)\|_1=\frac{1}{2}\| \psi \|_1,$ the limit object differs from the disease free and the endemic state, hence, it describes a nontrivial eternal solution making them a natural limit object to study.
\end{heuristics}

\begin{theorem}(Existence of nontrivial eternal solutions)
\label{t:existence}	

Assume $0 \leq W \in L^2([0,1]^2)$ is irreducible (or in the discrete case, connected) together with  supercriticality and $\varphi_1 \in L^{2+\rho}([0,1])$ for some $\rho>0$. Then, there is a nontrivial eternal solution such that
\begin{align}
\label{eq:small_eternal}
\lim_{t \to -\infty} \frac{\langle \varphi_1, u(t) \rangle}{\| u(t)\|_2}=1.
\end{align}

Also, when $W$ is discrete, the nontrivial eternal solution takes values from $\Delta_I.$
\end{theorem}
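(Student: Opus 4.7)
The plan is to realize $u^\infty$ as a limit of solutions starting from vanishing initial data and synchronized by hitting a fixed level $c_0$, mirroring the heuristic stated before the theorem. First, fix any nonzero $u_0 \in \Delta$ (in $\Delta_I$ in the discrete case) and, for each $n$, let $u^{(n)}$ be the solution of \eqref{eq:u} with $u^{(n)}(0) = \frac{1}{n} u_0$, which is well-defined and $\Delta$-valued by Proposition~\ref{t:PDE_basics}. Since $\varphi_1 > 0$ a.e.\ by irreducibility, $\langle u_0, \varphi_1\rangle > 0$; combined with $\beta \lambda_1 > \gamma$ this forces $\|u^{(n)}(t)\|_2$ to grow and eventually hit any prescribed level $c_0 \in (0, \|\psi\|_2)$. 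Set $t_n := \inf\{t \geq 0 : \|u^{(n)}(t)\|_2 = c_0\}$ and $\tilde u^{(n)}(s) := u^{(n)}(s + t_n)$, defined on $[-t_n, \infty)$. Comparison with the linear equation shows $t_n \sim \frac{1}{\alpha_1}\log n \to \infty$, and that throughout the growth phase $u^{(n)}(t)$ is well approximated by $\frac{1}{n} e^{\A t} u_0$.

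The core step is to show that $\{\tilde u^{(n)}\}$ is Cauchy in $L^\infty_{\mathrm{loc}}(\mathbb{R}; L^2)$. Writing the linearization in the eigenbasis gives $\frac{1}{n} e^{\A t} u_0 = \frac{1}{n}\sum_k \langle u_0, \varphi_k\rangle e^{\alpha_k t}\varphi_k$, which aligns with $\varphi_1$ at rate $e^{-(\alpha_1-\alpha_2)t}$, so that for large $n$
\begin{align*}
u^{(n)}(t_n + s) \approx c_0 e^{\alpha_1 s}\varphi_1
\end{align*}
uniformly on any left-bounded interval of $s$, with error vanishing as $n \to \infty$. In particular $\tilde u^{(n)}(0) \to c_0 \varphi_1$ in $L^2$, and Proposition~\ref{t:approx} propagates convergence forward in time to yield a limiting trajectory on $[0, \infty)$. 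An alternative route invokes Theorem~\ref{t:main} directly: USIC supplies shifts $s_n, s_m$ making $\{u^{(n)}\}$ close up to translation, and synchronizing $s_n$ with the level-set time $t_n$ (via strict monotonicity of $\|u^{(n)}(t)\|_2$ near $c_0$, which itself comes from the linearization) yields the same Cauchy estimate for $\tilde u^{(n)}$ on every compact interval $[-T, T]$ once $t_n, t_m > T$.

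The limit $u^\infty(s) := \lim_n \tilde u^{(n)}(s)$ is then a function on $\mathbb{R}$ satisfying \eqref{eq:u} on every compact subinterval by continuity of the forward flow (Proposition~\ref{t:approx}). Since each $\tilde u^{(n)}(s) \in \Delta$ once $s \geq -t_n$, and $\Delta$ is closed in $L^2$, $u^\infty(s) \in \Delta$ for all $s$; in the discrete case $\Delta_I$ is also closed and forward-invariant, giving $u^\infty(s) \in \Delta_I$. Nontriviality follows because $\|u^\infty(0)\|_2 = c_0 > 0$ (ruling out the disease-free state) while $\|u^\infty(s)\|_2 \to 0$ as $s \to -\infty$ (ruling out $\psi$). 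The alignment \eqref{eq:small_eternal} drops out of the explicit asymptotic $\tilde u^{(n)}(s) \approx c_0 e^{\alpha_1 s}\varphi_1 + O(e^{-(\alpha_1-\alpha_2)(t_n+s)})$: letting $n \to \infty$ first and then $s \to -\infty$ gives $u^\infty(s) / \|u^\infty(s)\|_2 \to \varphi_1$.

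The main technical obstacle is controlling the linear-approximation error uniformly over the long growth phase $[0, t_n]$ despite $\varphi_1$ being possibly unbounded; this is where the $L^{2+\rho}$ hypothesis enters, providing the integrability needed to handle the nonlinear term $u\,\W u$ via H\"older's inequality. A secondary subtlety is reconciling the three case-by-case hypotheses of Theorem~\ref{t:main} with the looser hypotheses here: either one argues that the linear-approximation approach described above works directly under these hypotheses without invoking USIC as a black box, or one imports USIC in whichever variant is applicable and pays attention to the strict-monotonicity synchronization step.
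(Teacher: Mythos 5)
Your high-level plan (vanishing initial data, synchronize, pass to the limit) matches the paper's blueprint, but there are genuine gaps. First, synchronizing at a fixed level $c_0 \in (0,\|\psi\|_2)$ and then asserting $\tilde u^{(n)}(0) \to c_0\varphi_1$ fails when $c_0$ is macroscopic: the linear approximation $u^{(n)}(t) \approx \frac{1}{n}e^{\A t}u_0$ is valid only while the prevalence stays small, and at macroscopic levels the nonlinearity has already reshaped the profile away from $\varphi_1$ (note $\psi$ is not proportional to $\varphi_1$ in general). The paper instead anchors at a small, vanishing level: it starts from $u_n(-n)=\min\{\varepsilon_n\varphi_1,1\}$ with $\varepsilon_n = \varepsilon_0 e^{-\alpha_1 n}$, compares the solutions at time $-k_0$ where they are $\approx\varepsilon_{k_0}\varphi_1$ and hence still inside the linear regime, and sends $k_0\to\infty$; this also sidesteps the strict-monotonicity question you need in order for the hitting time $t_n$ to be well-defined. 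Second, the route through Theorem~\ref{t:main} is unavailable here: Theorem~\ref{t:main} requires one of three strictly stronger case-by-case hypotheses (discrete, bounded with a spectral gap, or rank-$1$), whereas Theorem~\ref{t:existence} assumes only irreducibility, supercriticality, and $\varphi_1\in L^{2+\rho}$. You flag this mismatch as ``secondary,'' but it is fatal for that branch; the paper's proof is deliberately self-contained precisely because initial data pre-aligned with $\varphi_1$ remove the need for those extra hypotheses.

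Third, the role of $L^{2+\rho}$ in the paper is not H\"older control of $u\,\W u$ (that is Lemma~\ref{l:rank_1}, which belongs to the proof of Theorem~\ref{t:main} in the rank-$1$ case). It is used to bound the truncation error $\|\eta_n\|_2 = O(\varepsilon_n^{\rho/2})$ incurred when clipping $\varepsilon_n\varphi_1$ into $\Delta$; that polynomial decay rate is exactly what beats the Gr\"onwall accumulation over the lengthening interval $[-k_0,0]$ and closes the Cauchy estimate \eqref{eq:Cauchy}. Your choice $u^{(n)}(0)=\frac1n u_0$ avoids any truncation, but then you must supply a different quantitative rate of alignment with $\varphi_1$ and verify it survives the Gr\"onwall step; this is precisely the step you have left at the level of a heuristic, and it is not clear that $L^{2+\rho}$ is the right assumption to make that variant close.
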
	

Note that in Theorem \ref{t:main} $\varphi_1 \in L^{2+\rho}([0,1])$ is either assumed explicitly or a consequence of $W$ being bounded.

The significance of \eqref{eq:small_eternal} is that it allows us to describe the shape of $u(t,x)$ in the early stages of the epidemic, where the eternal solution is most applicable. Since initially there are only a few infections (see Lemma \ref{l:convergene_to_0} below), $u(t)$ can be approximated by \eqref{eq:v_expanded}, and while the weights are mostly concentrated on $\varphi_1$ due to \eqref{eq:small_eternal}. Therefore, after appropriate time translation we get
\begin{align}
\label{eq:small_eternal2}
u(t,x) \approx \|u(0) \|_2 e^{\alpha_1 t} \varphi_1(x)
\end{align}
when $\|u(0) \|_2$ is small.

\begin{lemma}
\label{l:convergene_to_0}
Assume $0 \leq W \in L^2([0,1]^2)$ together with the connectivity property \eqref{eq:W_connectivity} and supercriticality. Let $u(t)$ be a nontrivial eternal solution. Then $\lim_{t \to -\infty}u(t)=0$ a.e.
\end{lemma}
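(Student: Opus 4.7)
The plan is to establish $\|u(t)\|_2\to 0$ as $t\to-\infty$ via a weak-compactness argument, and then upgrade to a.e.\ pointwise convergence using the Duhamel representation of the PDE; the case $\gamma=0$ admits a softer proof through monotonicity that bypasses the $L^2$-step entirely.

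For the $L^2$-decay I would argue as follows. The family $\{u(t)\}_{t\le 0}\subset\Delta$ is uniformly bounded in $L^\infty$, hence weakly sequentially precompact in $L^2$; for any $t_n\to-\infty$, extract a weak $L^2$-limit $u_*\in\Delta$ along a subsequence. The claim is $u_*\equiv 0$. Suppose otherwise. Because $\W$ is Hilbert--Schmidt (so $\W u^{(n)}\to\W u_*$ strongly whenever $u^{(n)}\rightharpoonup u_*$), one can pass to the weak limit in the PDE and conclude that the time-$s$ map $\Phi_s$ is weakly continuous on $\Delta$; hence $u(t_{n_k}+s)=\Phi_s(u(t_{n_k}))\rightharpoonup \Phi_s(u_*)$ for each fixed $s>0$. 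Forward convergence to the endemic state from nonzero data (Proposition~4.13 of \cite{SISdyn} for bounded $W$, extended to $L^2$-kernels via Proposition~\ref{t:approx}) gives $\Phi_s(u_*)\to\psi$ in $L^2$ as $s\to\infty$; choosing $s$ large so that $\Phi_s(u_*)$ is close to $\psi$ and then $k$ large so that $t_{n_k}+s<0$, Lyapunov stability of $\psi$ under the forward flow forces $u(0)$ to be close to $\psi$, contradicting $u(0)\neq\psi$. Hence every weak subsequential limit is $0$, so $u(t)\rightharpoonup 0$; pairing with the constant $1$ gives $\|u(t)\|_1\to 0$, and then $\|u(t)\|_2^2\le\|u(t)\|_\infty\|u(t)\|_1\le\|u(t)\|_1\to 0$.

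For the a.e.\ upgrade when $\gamma>0$, view the PDE at each fixed $x$ as a scalar non-autonomous linear ODE $\partial_t u(\cdot,x)+g(\cdot,x)u(\cdot,x)=\beta(\W u)(\cdot,x)$ with $g:=\beta\W u+\gamma\ge\gamma>0$. The integrating-factor formula gives
\begin{align*}
u(t,x)=e^{-\int_{t_0}^t g(r,x)\,\d r}u(t_0,x)+\int_{t_0}^t e^{-\int_s^t g(r,x)\,\d r}\,\beta(\W u)(s,x)\,\d s,
\end{align*}
and letting $t_0\to-\infty$ kills the initial term since $g\ge\gamma$ and $u(t_0,x)\le 1$, leaving $u(t,x)\le \beta\int_0^\infty e^{-\gamma r}(\W u)(t-r,x)\,\d r$. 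By Fubini, $W(x,\cdot)\in L^2([0,1])$ for a.e.\ $x$, so Cauchy--Schwarz combined with the $L^2$-decay yields $(\W u)(t-r,x)\le\|W(x,\cdot)\|_2\|u(t-r)\|_2\to 0$ for a.e.\ $x$ as $t\to-\infty$, while the integrand is dominated by the integrable function $e^{-\gamma r}d(x)$ with $d(x)=\int W(x,y)\,\d y<\infty$ a.e. Dominated convergence gives $u(t,x)\to 0$ for a.e.\ $x$. The case $\gamma=0$ is simpler and needs no $L^2$-input: $\partial_t u=\beta(1-u)\W u\ge 0$, so $u(t,x)$ is pointwise non-decreasing in $t$, and the decreasing pointwise limit $u_{-\infty}(x):=\lim_{t\to-\infty}u(t,x)\in[0,1]$ exists a.e. Passing to the limit in the PDE shows $u_{-\infty}$ is an equilibrium, $(1-u_{-\infty})\W u_{-\infty}=0$ a.e., and a connectivity argument via \eqref{eq:W_connectivity} applied to the level set $\{u_{-\infty}=1\}$ and then to the support of $u_{-\infty}$ forces $u_{-\infty}\in\{0,1\}$ a.e.; nontriviality rules out $u_{-\infty}\equiv 1=\psi$, leaving $u_{-\infty}\equiv 0$.

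The main obstacle is the first step: the three ingredients -- weak continuity of $\Phi_s$, forward convergence $\Phi_s(u_*)\to\psi$ for nonzero $u_*\in\Delta$, and Lyapunov stability of $\psi$ -- all require care in the $L^2$-kernel setting. The forward convergence is available for bounded $W$ via \cite{SISdyn} and for rank-$1$ kernels via Lemma~\ref{l:convergence_to_equilibrium}; extending it to general $L^2$ kernels should proceed by approximation using Proposition~\ref{t:approx}. A cleaner self-contained route would be a genuine Lyapunov functional monotone in the backward direction, but natural candidates such as $\langle u,\varphi_1\rangle$ only yield one-sided exponential bounds that propagate forward rather than backward.
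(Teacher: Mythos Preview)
Your overall architecture differs substantially from the paper's proof, and the first step contains a genuine gap. The paper never passes through weak compactness: it uses Lemma~\ref{l:convergence_to_equilibrium} directly, observing that $\|u-\psi\|_2^2$ is nonincreasing with dissipation $2\beta\int(\tilde u)^2\W u\,\d x$. Integrating this from $-t$ to $0$ and using $\|u-\psi\|_2^2\le 4$ shows the dissipation is integrable on $(-\infty,0]$, hence tends to $0$; the paper then works pointwise with $u(-\infty,x):=\limsup_{t\to-\infty}u(t,x)$ and uses connectivity \eqref{eq:W_connectivity} twice (once on the set where $\W u(-\infty)=0$, once on level sets of $u(-\infty)$) to force $u(-\infty)\equiv 0$. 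Ironically, this is precisely the ``genuine Lyapunov functional monotone in the backward direction'' that you propose as a cleaner alternative at the end of your write-up: $\|u-\psi\|_2^2$ is nondecreasing as $t$ decreases.

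The gap in your argument is the passage from weak convergence to the hypothesis of Lyapunov stability. You obtain $u(t_{n_k}+s)\rightharpoonup\Phi_s(u_*)$ weakly and $\|\Phi_s(u_*)-\psi\|_2$ small, but to conclude that $u(0)$ is close to $\psi$ via monotonicity of $t\mapsto\|u(t)-\psi\|_2$ you would need $\|u(t_{n_k}+s)-\psi\|_2$ small for some $k$, i.e.\ strong closeness. Weak convergence does not provide this: by lower semicontinuity of the norm you only get $\|\Phi_s(u_*)-\psi\|_2\le\liminf_k\|u(t_{n_k}+s)-\psi\|_2$, and the right-hand side in fact equals the backward limit $\ell=\lim_{t\to-\infty}\|u(t)-\psi\|_2\ge\|u(0)-\psi\|_2>0$ regardless of $s$. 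To repair this you would need strong precompactness of the backward orbit $\{u(t):t\le 0\}$ in $L^2$, which is not free for merely $L^2$ kernels and is not established in your outline. Your second step (the Duhamel upgrade to a.e.\ convergence for $\gamma>0$, and the monotonicity argument for $\gamma=0$) is correct and pleasant, but it rests on the unproven $L^2$-decay.
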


Next we turn to uniqueness of the eternal solution. Of course, any uniqueness can only be up to time translation, since if $u(t)$ is an eternal solution, then any time translated version $u(t+\tau)$ will also be an eternal solution.

Without the connectivity assumption of Lemma \ref{l:convergene_to_0}, several fundamentally different solutions may arise. For example, when the graph is the union of two disjoint complete graphs, we could treat the solutions on them separately, hence a mixture of disease free state on one component, endemic state on the other is possible.

However, under USIC, the only ambiguity that can occur is due to time translation. Combining it with Lemma \ref{l:convergene_to_0} \emph{the} nontrivial eternal solution can be interpreted as an epidemic started from the infinite past from an infinitesimally small initially infected population with. Also, the beginning exhibits exponential growth with "spatial" distribution described by $\varphi_1(x)$ due to \eqref{eq:small_eternal2}.

Since epidemics usually starts from only a small amount of initial infections, USIC shows that the nontrivial eternal solution is the limit object, reducing the problem to a simple curve from the original infinite dimensional problem, at least for given parameters $\beta,\gamma,W$.

\begin{theorem}(Uniqueness of nontrivial eternal solutions)
\label{t:uniqunes}

Assume the conditions of Theorem \ref{t:main}. Let $u_1,u_2$ be two nontrivial eternal solutions. Then there is a translation time $\tau$ such that $u_1(t+\tau)=u_2(t).$
\end{theorem}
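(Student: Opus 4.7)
The strategy is a Cauchy-type argument driven by USIC: apply Theorem \ref{t:main} at ever-earlier reference times where both eternal solutions are simultaneously small, then extract a limiting time shift. Fix sequences $\varepsilon_n,\eta_n \downarrow 0$ and let $\delta_n>0$ be the thresholds supplied by Theorem \ref{t:main}. Since each $u_i$ is nontrivial and eternal, Lemma \ref{l:convergene_to_0} together with $0 \leq u_i \leq 1$ and dominated convergence yield $\|u_i(t)\|_2 \to 0$ as $t \to -\infty$; combined with continuity in $L^2$ and $u_i \not\equiv 0$, we can pick reference times $s_i^{(n)}$ with $0<\|u_i(s_i^{(n)})\|_2 \leq \delta_n$. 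Applying USIC to the translated data $u_i(\cdot + s_i^{(n)})$ yields shifts $t_i^{(n)} \geq 0$ such that, setting $T_i^{(n)} := s_i^{(n)}+t_i^{(n)}$ and $\tau_n := T_1^{(n)}-T_2^{(n)}$,
\begin{align*}
\sup_{t' \geq T_2^{(n)}} \|u_1(t'+\tau_n)-u_2(t')\|_2 \leq \varepsilon_n, \qquad \|u_i(T_i^{(n)})\|_2 \leq \eta_n.
\end{align*}

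Since $u_2(t) \to \psi \neq 0$ as $t \to +\infty$ (remark after \eqref{eq:psi}), the bound $\|u_2(T_2^{(n)})\|_2 \to 0$ combined with continuity and uniqueness of solutions to \eqref{eq:u} forces $T_2^{(n)} \to -\infty$. Hence for every fixed $t' \in \mathbb{R}$ eventually $t' \geq T_2^{(n)}$, and so $u_1(t'+\tau_n) \to u_2(t')$ in $L^2$. To show $(\tau_n)$ is bounded, suppose $\tau_{n_k} \to +\infty$ along a subsequence: then $t'+\tau_{n_k} \to +\infty$ for any fixed $t'$, giving $u_1(t'+\tau_{n_k}) \to \psi$ and hence $u_2 \equiv \psi$, contradicting nontriviality. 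Analogously, $\tau_{n_k} \to -\infty$ combined with Lemma \ref{l:convergene_to_0} forces $u_2 \equiv 0$, again a contradiction.

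Hence $(\tau_n)$ is bounded; extract $\tau_{n_k} \to \tau \in \mathbb{R}$. Continuity of $u_1$ in $L^2$ gives $u_1(t'+\tau_{n_k}) \to u_1(t'+\tau)$, and together with the previous step $u_1(t'+\tau)=u_2(t')$ for every $t' \in \mathbb{R}$, which is the theorem. The main obstacle is securing the dichotomy above: both the backward convergence $u_i(t) \to 0$ in $L^2$ (Lemma \ref{l:convergene_to_0} promoted by dominated convergence) and the forward convergence $u_i(t) \to \psi$ for any nontrivial solution must hold in $L^2$, the latter imported from the convergence lemmas announced in the remark after \eqref{eq:psi}; granted these, the rest of the argument is a soft compactness-and-continuity passage to the limit built on USIC.
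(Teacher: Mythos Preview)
Your argument is correct and takes a genuinely different route from the paper's. The paper proceeds by contradiction: it assumes $\sup_{t}\inf_{\tau}\|u_1(t+\tau)-u_2(t)\|_2=2\varepsilon>0$, applies USIC \emph{once} with this $\varepsilon$ (and $\eta=\varepsilon/2$) at a single far-past reference time $-T$, and obtains a contradiction via a short case split on whether $T>t_2$ or $T\le t_2$. Your proof is instead direct and constructive: you iterate USIC along sequences $\varepsilon_n,\eta_n\downarrow 0$ to produce approximate shifts $\tau_n$, show they form a bounded sequence by playing them against the two asymptotics $u_i(-\infty)=0$ and $u_i(+\infty)=\psi$, and then pass to a convergent subsequence.

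What each approach buys: the paper's proof is shorter and uses only the backward limit from Lemma~\ref{l:convergene_to_0}; on the other hand it really only establishes $\sup_t\inf_\tau=0$, and the final passage to a \emph{single} $\tau$ with $u_1(\cdot+\tau)=u_2$ still needs (implicitly) that the infimum is attained at some $t_0$ and then forward/backward uniqueness of \eqref{eq:u} on $\Delta$. Your compactness argument produces the actual $\tau$ explicitly and gets equality at every $t'$ in one stroke; the price is that you also need the forward limit $u_i(t)\to\psi$ in $L^2$, which is not stated as a stand-alone result for nontrivial eternal solutions but indeed follows under the hypotheses of Theorem~\ref{t:main} from the machinery of Sections~\ref{s:varepsilon'}--\ref{s:close_to_endemic} (after Lemma~\ref{l:convergene_to_0} puts you at a small level, the route through \eqref{eq:epsilon'_level1}/\eqref{eq:epsilon'_level2} and Lemmas~\ref{l:convergence_to_equilibrium}--\ref{l:epsilon_0_tilde} gives exponential convergence to $\psi$). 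Two small points worth making explicit in your write-up: when ruling out bounded $T_2^{(n)}$ you are using \emph{backward} as well as forward uniqueness of \eqref{eq:u} on $\Delta$ (Lipschitz right-hand side) to conclude $u_2\equiv 0$ from $u_2(T^*)=0$; and the upgrade of Lemma~\ref{l:convergene_to_0} from a.e.\ to $L^2$ via dominated convergence is valid because that lemma actually shows $\limsup_{t\to-\infty}u(t,x)=0$ a.e., hence pointwise convergence along every sequence $t_n\to-\infty$.
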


\subsection{Explicit formulas and approximations}

Here, we shows some explicit formulas and heuristics regarding the eternal solution.

\subsubsection{Heuristics for infections close to criticality}

So far we mostly gave implicit descriptions of the nontrivial eternal solutions, apart from \eqref{eq:small_eternal2}. In this section, we aim to give some more explicit formulas for some cases.

We highlight the work \cite{NIMFA_time_dependent} deriving a $\tanh(t)$-formula
\begin{align*}
	\frac{\d}{\d t} c(t)=&(\beta \lambda_1-\gamma )c(t)\left(1-c(t) \right)\\
	u(t,x) \approx &(\beta \lambda_1-\gamma)c(t)\varphi_1(x)
\end{align*}
 for epidemics on finite networks when the infection rate $\beta$ is just slightly above critical. Intuitively, when we are close to criticality, the endemic state becomes small, hence one can linearize \eqref{eq:psi}  to get $\beta \W \psi \approx \gamma \psi$ making $\psi \approx (\beta \lambda_1-\gamma) \varphi_1.$ This means that the initial growth phase \eqref{eq:small_eternal2} looks similar to the saturation phase around $\psi$, resulting in a logistic curve with ``spatial'' distribution given by $\varphi_1(x).$

We believe these ideas can be generalized for a wider class of $W$ kernels \rem{besides finite graphs.}

\subsubsection{Explicit formulas for rank-$1$ kernels}

In then special case when the kernel has the form $W(x,y)=\lambda_1 \varphi_1(x)\varphi_1(y)$ it is possible to give a more explicit construction for the nontrivial eternal solution. At the cost of modifying $\beta$ we can set $\lambda_1=1$ without loss of generality. 

There are two qualitatively distinct cases: one where $\gamma=0$ corresponding to an SI dynamics and one with $\gamma>0$ for the general SIS dynamics.

\subsubsection*{The SI case}

When $\gamma=0$ we can rescale time to set $\beta=1$ resulting in
\begin{align*}
\partial_t u=(1-u)\W u.	
\end{align*}

Let $u(t)$ be a nontrivial eternal solution. Treating $\W u(t)$ as a known function, \
\begin{align*}
u(t,x)=1-\exp\left( -\int_{t_0}^{t} \W u(s,x) \d s\right)(1-u(t_0,x)).
\end{align*}

From Lemma \ref{l:convergene_to_0} $u(t_0) \to 0$ as $t_0 \to -\infty$ resulting in
\begin{align*}
u(t,x)=1-\exp\left( -\int_{-\infty}^{t}  \W u(s,x) \d s\right).
\end{align*}

Since $W$ is rank-$1$
\begin{align*}
\int_{-\infty}^{t}  \W u(s,x) \d s=\varphi_1(x)\int_{-\infty}^{t} c_1(s) \d s=:\Omega(t)\varphi_1(x),
\end{align*}
which yields
\begin{align}
\label{eq:SI_explicit1}
u(t,x)=1-e^{-\Omega(t)\varphi_1(x)}
\end{align}
successfully separating the temporal and spatial variables. Our goal now is to express the temporal part $\Omega(t).$

Define the function
\begin{align}
\label{eq:F}
F(\omega):=\int_{0}^{1} \varphi_1(x)\left(1-e^{-\omega \varphi_1(x)} \right) \d x.
\end{align}
Note that for $\omega> 0$, $F$ is positive and for $\omega \geq 0$, it is Lipschitz continuous with constant $1$. It is also easy to see that $\Omega(0)>0$ and $\Omega(t)$ is increasing.

Observe $\frac{\d}{\d t} \Omega(t)=c_1(t)=\langle \varphi_1,u(t) \rangle,$ resulting in the dynamics
\begin{align}
\label{eq:Omega_ODE}
\frac{\d}{\d t} \Omega(t)=F(\Omega(t)).
\end{align}

To solve \eqref{eq:Omega_ODE} define
$$G(\omega):=\int_{0}^{\omega} \frac{1}{F(\omega')} \d \omega'.$$
Note that $G$ is strictly increasing, hence invertible, leading to
\begin{align}
\Omega(t)=G^{-1}\left(t+G(\Omega(0)) \right).
\end{align}

\subsubsection*{The SIS case}

By rescaling time one can set $\gamma=1$ while $\beta>1$ as we need supercriticality.

Define $u^c(t,x):=1-u(t,x)$ referring to the probability of being susceptible.
\begin{align*}
&\partial_t u^c(t)=-\beta u^c(t) \W u(t)+(1-u^c(t)) \\
&\partial_t u^c(t)+\left(1+\beta \W u(t) \right)u^c(t)=1 \\
&u^c(t,x)=\exp \left(-(t-t_0)-\beta\int_{t_0}^{t} \W u(s,x) \d s \right) \left[u^c(t_0,x) \right. \\
& \left.+\int_{t_0}^{t} \exp\left((s-t_0)+\beta \int_{t_0}^{s} \W u(\tau,x) \d \tau\right) \d s \right]
\end{align*}

Notice $0 <\exp \left(-(t-t_0)- \beta\int_{t_0}^{t}\W u(s,x) \d s \right) \leq e^{-(t-t_0)} \to 0$ as $t_0 \to -\infty$. Therefore, the first term disappears in the limit.

\begin{align*}
u^c(t,x)=&\int_{-\infty}^{t}e^{-(t-s)} \exp \left(-\beta \int_{s}^{t} \W u(\tau,x) \d \tau \right) \d s= \\
 &\int_{0}^{\infty} e^{-s} \exp \left(-\beta \int_{t-s}^{t} \W u(\tau,x) \d \tau \right)  \d s\\
 u(t,x)=&\int_{0}^{\infty} e^{-s}\left[ 1-\exp \left(-\beta \int_{t-s}^{t} \W u(\tau,x) \d \tau \right) \right] \d s
\end{align*}
Note that $\int_{t-s}^{t} \W u(\tau,x) \d \tau=\varphi_1(x)\int_{t-s}^{t} c_1(\tau) \d \tau=\varphi_1(x) \left[\Omega(t)-\Omega(t-s) \right]$ making
\begin{align}
\label{eq:SIS_explicit1}
u(t,x)=\int_{0}^{\infty}e^{-s}\left[1-\exp \left(-\beta \left[\Omega(t)-\Omega(t-s) \right] \varphi_1(x) \right) \right] \d s.	
\end{align}

The dynamics for the temporal part becomes the following Delayed Differential Equation (DDE)
\begin{align}
\label{eq:Omega_DDE}
\frac{\d}{\d t} \Omega(t)=\int_{0}^{\infty} e^{-s}F \left(\beta \left[\Omega(t)-\Omega(t-s) \right] \right) \d s.
\end{align}

\subsection{Simulations}
\label{s:simulation}

Here we present some simulations illustration the results of Section \ref{s:results}.

\rem{Firstly, we discus Figure \ref{fig_curves} and \ref{fig_translate} form Section \ref{s:introduction} and Figure \ref{f:high_low}.

As stated under the description of Figure \ref{fig_curves}, $\beta=1, \ \gamma=0$ the graphon is rank-$1$ with power-law with parameter $p=0.4$. So the PDE is
\begin{align*}
\partial_t u(t,x)=0.2x^{-0.4}(1-u(t,x)) \int_{0}^{1} y^{-0.4} u(t,y) \d y.
\end{align*}

For the numerical approximation of the PDE \eqref{eq:u} we first discretize $W$ according to Remark \ref{r:discrete} with $n=100$ and also cut off the power-law function at the value $1000$. Then, ODE system \eqref{eq:z} is numerically integrated via Euler's method with step size $h=0.003$. The global ratio of infections are plotted.

For Figure \ref{fig_curves} and \ref{fig_translate}  The initial conditions are $z_i(0) \equiv=10^{-k} \ k=1,\dots, 5.$ For Figure \ref{f:high_low} there are 3 cases: uniform initial condition (blue solid line) where $z_i(0) \equiv 0.01$, low degree initial infections (red line with circles) with $z_n(0)=1, \ z_i(0)=0$ for $i<n$ and high degree initial infections where $z_1(0)=1, \ z_i(0)=0$ for $i>1$. (Note that $\varphi_1\left( \frac{i}{n} \right)=\sqrt{1-2p} \left( \frac{i}{n} \right)^{-p}$ is monotone decreasing in $i$, thus high degree nodes have small index $i$. )

In both cases time shift is numerically found  where the values reach (approximately) $0.1$.
\begin{figure}[H]
	\centerline{\includegraphics[scale=0.50]{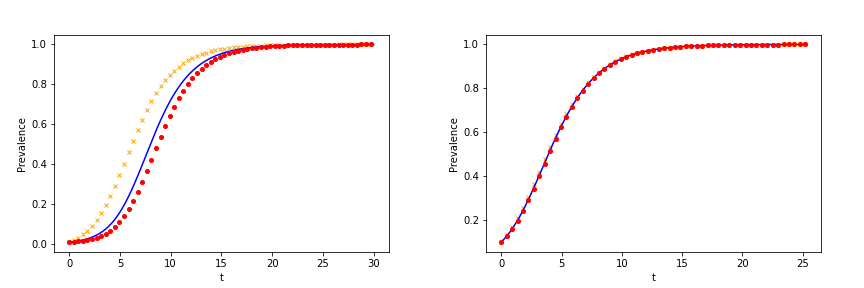}}
	\caption{The ratio of infected individuals for an SI epidemics (parameters: infection rate $\beta=1$, curing rate $\gamma=0$) on a power-law network with parameter $p=0.4$. Initially $1$\% of the population is infected who are distributed in the following three ways: Uniform infection (blue solid line), low degree infection (red circle line) and high degree infection (orange cross marked line). Left: before translation. Right: after translation. }
	\label{f:high_low}
\end{figure}

 }

For the rest of the simulations the parameters are fixed at $\beta=10, \ \gamma=1$. The population is made of five communities labeled from A to E with size and edge density given by Figure \ref{fig_metapop_graph}. In total there are $N=1500000$ individuals. \rem{With numbers, the vector $(|I_i|)_{i=1}^{5}$ is $[\frac{5}{15}, \frac{2}{15}, \frac{2}{15}, \frac{3}{15}, \frac{3}{15}]$ and the matrix $\left(W_{ij} \right)_{i,j=1}^{5}$ is
\[
\begin{bmatrix}
		1 & 0.25 & 0.25 & 0 & 0.1 \\
		0.25 & 1 & 0.25 & 0& 0 \\
		0.25 & 0.25 & 1 & 0.5 & 0 \\
		0 & 0 & 0.5 & 1 & 0.25 \\
		0.1 & 0 & 0 & 0.25 & 1 \\
\end{bmatrix}.
\]

For the time shift in Figure \ref{fig_USIC_metapop2} uses \eqref{eq:timeshit}. 

For the stochastic simulations we use the Gillespie algorithm.

}
The PDE in this case reduces to NIMFA given by \eqref{eq:z}. Note that NIMFA only requiers the relative size of the sub-populations given by $|I_i|,$ the absolute size $N |I_i|$ is only relevant for the stochastic simulations.

\begin{figure}[H]
	\centerline{\includegraphics[scale=0.50]{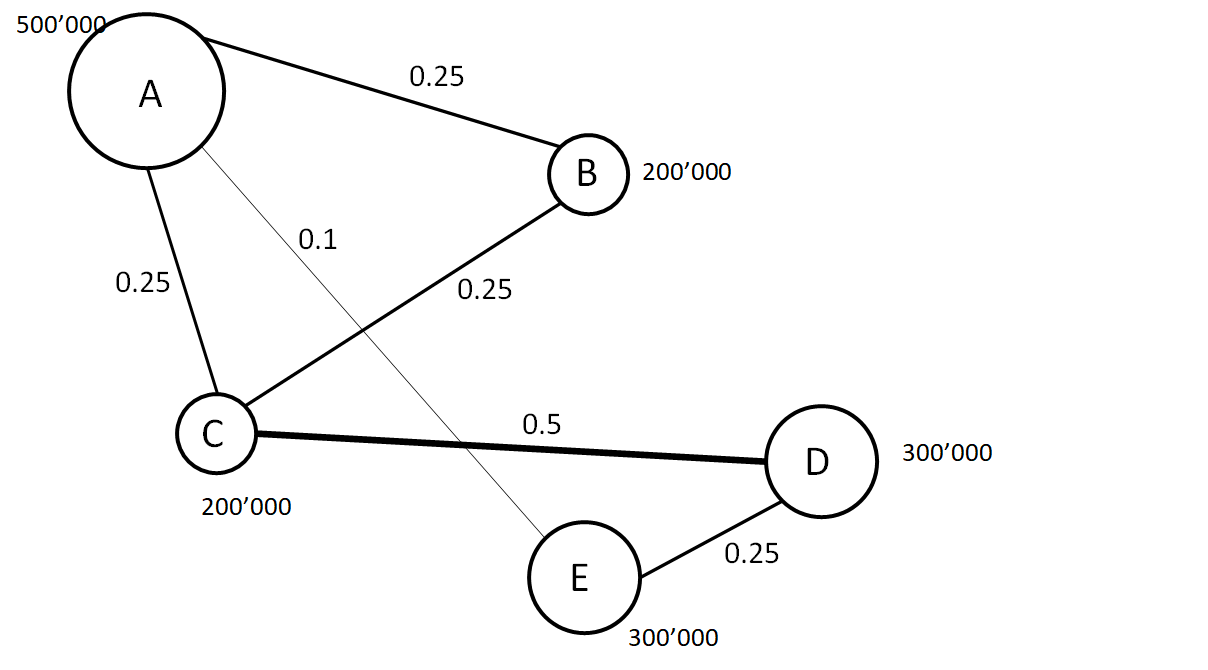}}
	\caption{The meta-population graph for the simulations. The numbers on nodes and vertices denotes the size of the sub-population ($N |I_i|$) and edge density ($W_{ij}$) respectively. Within population edge densities are set to $W_{ii}=1$.  }
	\label{fig_metapop_graph}
\end{figure}

In the first setting there are $100$ infected individuals starting from community E. The $30$ stochastic simulations and the numerical solution for \eqref{eq:z} runs until $T=3$.

We compare the proportion of infected individuals both in the sotochasti and the deterministic simulations to the leading eigenvector $C\varphi_1,$ where $C$ is chosen such that there are the same number of infected individuals in \eqref{eq:z} at time $T=3$ as in $C \varphi_1$. As we can see in Figure \ref{fig_metapop} both the stochastic simulations and the deterministic ODE approximation are close to being proportional to the leading eigenvector as it is predicted by Theorem \ref{t:existence}.

\begin{figure}[H]
	\centerline{\includegraphics[scale=0.7]{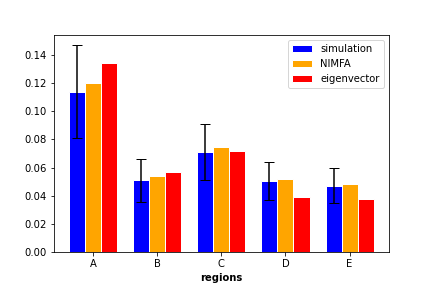}}
	\caption{Prevalence at time $T=3$ in the 5 communities in the first setting. For the simulations the bar represents the average of 30 simulations while the error bars shows $2$ times the standard deviation.   }
	\label{fig_metapop}
\end{figure} 

In the second setting we test whether two quantitatively different initial conditions would lead to similar epidemic curves at all five locations up to time translation as Theorem \ref{t:main} suggests.

We start the epidemic with a $100$ initial infections and run it until time $T=8$. The two initial conditions are:
\begin{itemize}
	\item All the infections starts from community A.
	\item Each community has $20$ initially infected individuals.
\end{itemize}

\begin{figure}[H]
	\centerline{\includegraphics[scale=0.42]{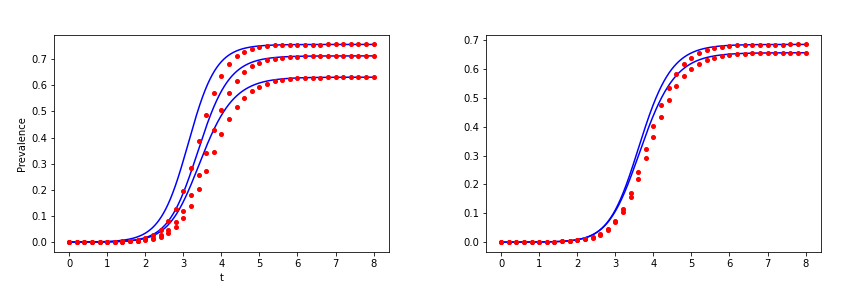}}
	\caption{Prevalence in the second setting. Left: communities A to C. Right: communities D and E. Solid lines: All initial infections start from A. Dotted lines: each community starts with $20$ infections. }
	\label{fig_USIC_metapop1}
\end{figure}

After appropriate (global) time shift the two set of curves are virtually indistinguishable.

\begin{figure}[H]
	\centerline{\includegraphics[scale=0.42]{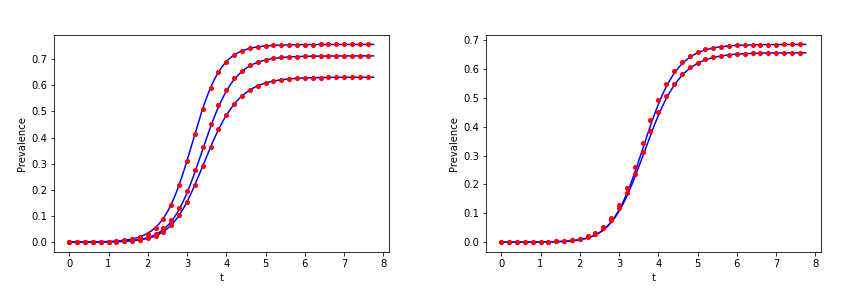}}
	\caption{Time shifted version of Figure \ref{fig_USIC_metapop1} \rem{using \eqref{eq:timeshit}} . }
	\label{fig_USIC_metapop2}
\end{figure}

\section{Proofs}
\label{s:proof}

\subsection{Proof of auxiliary statements}
\label{s:auxiliary}

Here we gathered some lemmas that are used in proofs but not vital to understanding the main ideas. 

Recall the notations from Section \ref{s:linear}.

\begin{lemma}
	\label{l:u_and_v}
	
	Denote $g(t):=\beta u(t) \W u(t).$
	\begin{align}
		\label{eq:u_and_v}
		u(t)=v(t)-\int_{t_0}^{t}e^{\mathcal{A}(t-s)}g(s) \d s
	\end{align}
\end{lemma}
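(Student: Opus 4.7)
The plan is to recognize this as a direct application of the variation of constants (Duhamel) formula for the semigroup generated by the bounded operator $\mathcal{A} = \beta\W - \gamma\mathbb{I}$.

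First I would rewrite the nonlinear PDE \eqref{eq:u} by expanding
$$\partial_t u = \beta(1-u)\W u - \gamma u = (\beta\W - \gamma\mathbb{I})u - \beta u\W u = \mathcal{A}u - g(t),$$
so that $u$ satisfies the inhomogeneous linear evolution equation $\partial_t u = \mathcal{A}u - g(t)$ with initial condition $u(t_0)$, while $v$ satisfies the homogeneous version $\partial_t v = \mathcal{A}v$ with the same initial condition $v(t_0) = u(t_0)$.

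Next, I would denote the candidate right hand side by
$$R(t) := v(t) - \int_{t_0}^{t} e^{\mathcal{A}(t-s)} g(s)\, \d s = e^{\mathcal{A}(t-t_0)} u(t_0) - \int_{t_0}^{t} e^{\mathcal{A}(t-s)} g(s)\, \d s,$$
and verify the identity by checking initial conditions and differentiating. Clearly $R(t_0) = u(t_0)$. Since $\mathcal{A}$ is bounded (as $\W$ is Hilbert-Schmidt), $e^{\mathcal{A}\tau}$ is well defined as a norm-convergent power series, commutes with $\mathcal{A}$, and satisfies $\partial_\tau e^{\mathcal{A}\tau} = \mathcal{A} e^{\mathcal{A}\tau}$. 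Differentiating under the integral (justified by boundedness of $\mathcal{A}$ and local boundedness of $g$, which follows from $u(t) \in \Delta$ and $\W$ being a bounded operator on $L^2$) yields
$$\partial_t R(t) = \mathcal{A}e^{\mathcal{A}(t-t_0)}u(t_0) - g(t) - \int_{t_0}^{t} \mathcal{A} e^{\mathcal{A}(t-s)} g(s)\, \d s = \mathcal{A} R(t) - g(t).$$
Thus $R$ satisfies the same linear inhomogeneous equation as $u$ with the same initial value.

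Finally, uniqueness of solutions of this linear inhomogeneous ODE in $L^2([0,1])$ (standard, since $\mathcal{A}$ is a bounded linear operator and the forcing $g$ is a prescribed $L^2$-valued function along the known solution $u$) forces $u(t) = R(t)$, giving \eqref{eq:u_and_v}. There is no real obstacle here: the only point requiring mild care is to justify differentiating the Duhamel integral, which is immediate from the boundedness of $\mathcal{A}$ on $L^2([0,1]^2)$ via Proposition \ref{t:PDE_basics} and the Hilbert-Schmidt property of $\W$.
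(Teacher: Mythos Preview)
Your proof is correct and follows essentially the same approach as the paper: rewrite \eqref{eq:u} as $\partial_t u = \mathcal{A}u - g$ and apply the variation of constants (Duhamel) formula for the bounded generator $\mathcal{A}$. The paper states the solution formula directly while you verify it by differentiation and uniqueness, but this is the same argument; one minor slip is that $\mathcal{A}$ acts on $L^2([0,1])$, not $L^2([0,1]^2)$, and its boundedness comes straight from $\W$ being Hilbert--Schmidt rather than from Proposition~\ref{t:PDE_basics}.
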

\begin{corollary}
	Note that $\tilde{v}(t):=e^{\mathcal{A}(t-s)}g(s)$ is a solution to \eqref{eq:v} with initial condition $\tilde{v}(s)=g(s) \geq 0$ as $u(s) \geq 0$. The branching random walk interpretation ensures $\tilde{v}(t) \geq 0$ which along with \eqref{eq:u_and_v} implies $u(t) \leq v(t).$
\end{corollary}

\begin{proof} (Lemma \ref{l:u_and_v})
	
	Note that
	\begin{align}
		\label{eq:u_inhomogenous}
		\partial_t u&=\beta(1-u)\W u-\gamma u=(\beta \W-\gamma \mathbb{I})u-\beta u \W u=\mathcal{A}u-g.
	\end{align}
	Treating $g$ as a known function makes \eqref{eq:u_inhomogenous} an inhomogeneous linear problem with solution
	\begin{align*}
		u(t)=&e^{A(t-t_0)} \left(u(t_0)-\int_{t_0}^{t}e^{-\mathcal{A}(s-t_0)}g(s) \d s\right)=\\
		& \underbrace{e^{A(t-t_0)}u(t_0)}_{=v(t)}-\int_{t_0}^{t}e^{\mathcal{A}(t-s)}g(s) \d s.
	\end{align*}
\end{proof}

\begin{lemma}
	\label{l:c1_bound}
	Assume $\varphi_1$ is uniformly positive with constant $m>0.$ Then 
	\begin{align*}	
		\|u(t) \|_2 \leq \frac{1}{\sqrt{m}}\sqrt{c_1(t)}.
	\end{align*}
\end{lemma}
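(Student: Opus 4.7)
The plan is to exploit two facts: first, that $u(t) \in \Delta$ so $0 \le u(t,x) \le 1$ pointwise a.e., and second, that $\varphi_1$ is bounded below by the positive constant $m$. These combine to control the $L^2$ norm by the single scalar $c_1(t) = \langle u(t), \varphi_1\rangle$.

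First I would square both sides so the goal becomes $\|u(t)\|_2^2 \le \frac{1}{m} c_1(t)$. The key pointwise observation is that $0 \le u(t,x) \le 1$ implies $u(t,x)^2 \le u(t,x)$ a.e., so integrating gives $\|u(t)\|_2^2 \le \|u(t)\|_1$. This is where the domain $\Delta$ from Proposition \ref{t:PDE_basics} enters: it is precisely what ensures $u(t,x)^2 \le u(t,x)$.

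Next I would control $\|u(t)\|_1$ by $c_1(t)$ using the uniform lower bound on $\varphi_1$. Since $u(t,x) \ge 0$ and $\varphi_1(x) \ge m > 0$,
\begin{align*}
\|u(t)\|_1 = \int_0^1 u(t,x)\,\d x \le \frac{1}{m}\int_0^1 u(t,x)\varphi_1(x)\,\d x = \frac{c_1(t)}{m}.
\end{align*}
Chaining the two inequalities yields $\|u(t)\|_2^2 \le c_1(t)/m$, and taking square roots completes the proof.

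There is no real obstacle here; the statement is essentially an immediate consequence of the pointwise bound $u \le 1$ and the uniform positivity of $\varphi_1$. The only subtle point worth flagging is that the argument requires $u(t) \in \Delta$ to hold at the time $t$ in question, which is guaranteed for $t \ge 0$ by Proposition \ref{t:PDE_basics} (or for all $t \in \mathbb{R}$ if $u$ is an eternal solution).
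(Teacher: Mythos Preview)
Your proof is correct and follows essentially the same approach as the paper: both use $u^2 \le u$ (from $u \in \Delta$) to bound $\|u(t)\|_2^2$ by $\|u(t)\|_1$, and then use $\varphi_1 \ge m$ to bound $\|u(t)\|_1$ by $c_1(t)/m$.
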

\begin{proof}(Lemma \ref{l:c1_bound})	
	\begin{align*}
		\|u(t) \|_2^2&=\int_{0}^{1}u^2(t,x) \d x \leq \int_{0}^{1}u(t,x) \d x= \frac{1}{m}\int_{0}^{1}mu(t,x) \d x\\
		&  \leq \frac{1}{m}\int_{0}^{1}\varphi_1(x)u(t,x) \d x =\frac{1}{m}c_1(t)
	\end{align*}
\end{proof}

\begin{lemma}
\label{l:bilinear}
Assume $W(x,y) \leq M$ or $W$ is rank-$1$ with $\varphi_1 \in L^{2+\rho}([0,1])$ for some $\rho>0.$ Then there are some $C>0,$ $0<\theta \leq 1$ such that
\begin{align*}
 \| u(t)\W u(t)\|_2 \leq C \| u(t)\|_2^{1+\theta}.	
\end{align*}
\end{lemma}

\begin{proof}(Lemma \ref{l:bilinear})

When $W(x,y) \leq M$ we can use
\begin{align*}
\W u(t,x)=\int_{0}^{1} W(x,y) u(t,y) \d y \leq M \int_{0}^{1} u(t,y)  \d y =M \|u(t) \|_1 ,	
\end{align*}
which implies
\begin{align*}
\|u(t) \W u(t) \|_2 \leq M \| u(t)\|_1 \|u(t) \|_2 \leq M \|u(t) \|_2^2.
\end{align*}

In the rank-$1$ case we use the Hölder inequality with $p=1+\frac{2}{\rho}, \ q=1+\frac{\rho}{2}.$ Note that $p>1$. Since $\W u(t,x)= \lambda_1\langle \varphi_1, u(t) \rangle \varphi_1(x),$
\begin{align*}
\|u(t)\W u(t) \|_2^2=&\lambda_1^2 \langle \varphi_1, u(t) \rangle^2 \| u(t) \varphi_1 \|_2^2 \leq \lambda_1^2 \underbrace{\|\varphi_1 \|_2^2}_{=1} \|u(t) \|_2^2  \langle u^2(t), \varphi_1^2 \rangle \leq \\
&\lambda_1^2 \|u(t) \|_2^2   \|u^2(t) \|_p \|\varphi_1^2 \|_q.
\end{align*} 

Note that $\|\varphi_1^2 \|_q$ is bounded as
\begin{align*}
\|\varphi_1^2 \|_q^{q}=\int_{0}^{1} \varphi_1^{2q}(x) \d x=\int_{0}^{1}\varphi_1^{2+\rho}(x) \d x<\infty.
\end{align*}
As for the $\|u^2(t) \|_p$ term
\begin{align*}
\|u^2(t) \|_p=\left(\int_{0}^{1} u^{2p}(t,x) \d x \right)^{\frac{1}{p}} \leq \left(\int_{0}^{1} u^{2}(t,x) \d x \right)^{\frac{1}{p}}= \| u(t)\|_2^{\frac{2}{p}}. 
\end{align*}
Thus,
\begin{align*}
\|u(t)\W u(t) \|_2 \leq \lambda_1 \sqrt{ \|\varphi_1^2 \|_q} \|u(t) \|_{2}^{1+\frac{1}{p}}.
\end{align*}
\end{proof}

\begin{lemma}
	\label{l:elsilon_0_error}
	
	\begin{align*}
		\sup_{0 \leq s \leq t} \|u_1(s)-u_2(s) \|_2 \leq \|u_1(0)-u_2(0) \|e^{\alpha_1 t}
	\end{align*}	
\end{lemma}

\begin{proof}(Lemma \ref{l:elsilon_0_error})
	
	Recall \eqref{eq:u^c_explicit} from the proof of Proposition \ref{t:approx}. It can be rewritten as
	\begin{align}
		\label{eq:u^c_expliciter}
		\begin{split}
			u_i^c(t)=&e^{-\gamma t} \exp \left(-\beta \int_{0}^{t} \W u_i(s) \d s\right)u_i^c(0)\\
			&+\int_{0}^{t} \gamma e^{-\gamma(t-s)} \exp \left(-\beta \int_{s}^{t} \W u_i(\tau) \d \tau \right) \d s.
		\end{split}
	\end{align}
	
	Define the error term $\Delta(t):= \|u_1(t)-u_2(t) \|_2.$ Note that $e^{-x}$ is Lipschitz with constant $1$ for $x \geq 0.$ The error arising from the first term of \eqref{eq:u^c_expliciter} can be bounded as
	\begin{align*}
		&\left \|e^{-\gamma t} \exp \left(-\beta \int_{0}^{t} \W u_1(s) \d s\right)u_1^c(0)-e^{-\gamma t} \exp \left(-\beta \int_{0}^{t} \W u_2(s) \d s\right)u_2^c(0) \right \|_2 \leq \\
		& e^{-\gamma t}\beta \lambda_1 \int_{0}^{t} \Delta (s) \d s+e^{-\gamma t} \Delta(0),
	\end{align*}
	while the second term becomes
	\begin{align*}
		&\left \| \int_{0}^{t} \gamma e^{-\gamma(t-s)} \left[  \exp \left(-\beta \int_{s}^{t} \W u_1(\tau) \d \tau \right)-\exp \left(-\beta \int_{s}^{t} \W u_2(\tau) \d \tau \right)  \right] \d s \right \|_2 \leq \\
		&\beta \lambda_1 \int_{0}^{t}\gamma e^{-\gamma(t-s)} \int_{s}^{t} \Delta(\tau) \d \tau \d s=\beta \lambda_1 \int_{0}^{t} \Delta(\tau) \int_{0}^{\tau} \gamma e^{-\gamma(t-s)} \d s \d \tau= \\
		&\beta \lambda_1 \int_{0}^{t} \left(e^{-\gamma(t-\tau) }-e^{-\gamma t} \right) \Delta(\tau) \d \tau .
	\end{align*}
	The two bounds together give
	\begin{align}
		\label{eq:Delta_error}
		\Delta(t) \leq e^{-\gamma t}\Delta(0)+\beta \lambda_1 \int_{0}^{t}e^{-\gamma(t-s)} \Delta(s) \d s.
	\end{align}
	
	Define $\tilde{\Delta}(t):=e^{\gamma t} \Delta(t)$ and multiply both sides by $e^{\gamma t}$ Grönwall's lemma concludes
	\begin{align*}
		\tilde{\Delta}(t) \leq& \tilde{\Delta}(0)+\beta \lambda_1 \int_{0}^{t} \tilde{\Delta}(s) \d s \\
		\tilde{\Delta}(t) \leq& \tilde{\Delta}(0)e^{\beta \lambda_1 t} \\ 
		\Delta(t) \leq & \Delta(0)e^{(\beta \lambda_1-\gamma)t}=\Delta(0)e^{\alpha_1 t}.
	\end{align*}
\end{proof}

\subsection{Proofs for the  existence, uniqueness and approximation of the PDE}
\label{s:PDE_proofs}

Here we give the proofs of Propositions \ref{t:PDE_basics} and \ref{t:approx}.

\begin{proof}(Proposition \ref{t:approx})
		Let $u(t)$ be a solution of \eqref{eq:u} such that $u(t) \in \Delta$ for all $t \in \mathbb{R}_{0}^{+}$. $u^c(t,x):=1-u(t,x)$ refers to the probability of being susceptible at $x$ at time $t$. It satisfies the PDE
		\begin{align}
			\label{eq:u^c}
			\partial_t u^c=-\beta u^c \W u+\gamma(1-u^c).
		\end{align}
		
		We define $L(t,x):=\beta \W u(t,x)+\gamma \geq 0$ and treat it as a known function. Solving \eqref{eq:u^c} under such assumption yields
		\begin{align}
			\nonumber
			\partial_t u^c(t)&+L(t)u^c(t)=\gamma \\
			\label{eq:u^c_explicit}
			u^c(t)=& \exp\left(-\int_{0}^{t}L(s) \d s\right)u^c(0)+\int_{0}^{t} \gamma \exp \left(-\int_{s}^{t}L(\tau) \d \tau \right) \d s.
		\end{align}
		
		Let now $u_1,u_2$ be two solutions of \eqref{eq:u} with initial conditions $u_1(0),u_2(0) \in \Delta$ and kernels $W_1,W_2$. Note that $u_1^c(t)-u_2^c(t)=-(u_1(t)-u_2(t))$ and 
		\begin{align}
		\label{eq:L_difference}
		L_1(t)-L_2(t)=\beta \left[\left(\W_1-\W_2\right)u_1(t)+\W_2\left(u_1(t)-u_2(t)\right) \right].
		\end{align}
	
		Also, note that $e^{-x}$ is Lipschitz continuous with constant $1$ for $x \geq 0$.  
		Thus, for $t \in [0,T]$ 
		\begin{align*}
			\|u_1(t)-u_2(t) \|_2 \leq& \left \| \left[\exp\left(-\int_{0}^{t}L_1(s) \d s\right)-\exp\left(-\int_{0}^{t}L_2(s) \d s\right) \right] u_1(0) \right \|_2 + \\
			&\left \|\exp\left(-\int_{0}^{t}L_2(s) \d s\right)[u_1(0)-u_2(0)] \right \|_1+\\
			& \left \|\int_{0}^{t} \gamma  \left[\exp \left(-\int_{s}^{t}L_1(\tau) \d \tau \right)-\exp \left(-\int_{s}^{t}L_2(\tau) \d \tau \right) \right] \d s \right \|_2 \\
			&\|u_1(0)-u_2(0) \|_2+(1+\gamma T)\int_{0}^{t} \|L_1(s)-L_2(s) \|_2 \d s \\
			\leq & \|u_1(0)-u_2(0) \|_2+(1+\gamma T)\beta T \|W_1 -\W_2\|_2\\
			&+(1+\gamma T)\beta \lambda\int_{0}^{t} \|u_1(s)-u_2(s) \|_2 \d s\\
			\sup_{0\leq t \leq T}\|u_1(t)-u_2(t) \|_2=& O\left(\|u_1(0)-u_2(0) \|_2+\|\W_1-\W_2 \|_2 \right).
		\end{align*}
	
	To finish, we observe $\|\W_1-\W_2 \|_2 \leq \|W_1-W_2 \|_2.$		 
\end{proof}

\begin{proof}(Proposition \ref{t:PDE_basics})
	The uniqueness simply follows from Proposition \ref{t:approx} by setting $u_1(0)=u_2(0)$ and $W_1=W_2$.

	Let $u_N$ be a sequence of solutions with kernel
	$$W_{N}(x,y):=\max\{W(x,y),N\} $$
	and initial condition $u_N(0)=u(0).$  \cite[Proposition 2.9. (ii)]{SISdyn} guarantees that such solutions exist on $\mathbb{R}_{0}^{+}$ and $u_N(t) \in \Delta.$

	From \eqref{eq:approx} we conclude
	\begin{align*}
		\sup_{0 \leq t \leq T} \| u_N(t)-u_M(t)\|_2=O(\|W_N-W_M \|_2) \to 0
	\end{align*}
	as $N,M \to \infty$, making $\left( u_N \right)_{N=1}^{\infty}$ a Cauchy-sequence on  $\mathcal{C}\left([0,T],L^2([0,1])\right).$ Since $T$ is arbitrary, the domain of $u$ can be extended to $\mathbb{R}_{0}^{+}.$
	
	It is straightforward to check that $u(t) \in \Delta$ and satisfies \eqref{eq:u}.
\end{proof}

\subsection{Proof of USIC}
\label{s:USIC_proof}

We mention that when $W$ is uniformly positive, then $\varphi_1$ is also uniformly positive as
\begin{align*}
	0<m:=\frac{m_0\| \varphi_1 \|_1}{\lambda_1} \leq \frac{1}{\lambda_1} \int_{0}^{1}W(x,y) \varphi_1(y) \d y = \varphi_1(x),
\end{align*} 
hence $\varphi_1(x)\geq m>0$ can be assumed in all three cases of Theorem \ref{t:main}.

First, we break down the proof to four claims - each with their own subsection - that together imply Theorem \ref{t:main}. \rem{These claims represent different scales which requiers different kind of arguments.
	
In Claim \ref{c:epsilon'} we establish the time translation where both of the solutions  $\approx \varepsilon' \varphi_1+o(\varepsilon')$ while the linearization is still accurate. Here $\varepsilon'$ is arbitrarily small.

Next, in Claim \ref{c:epsilon_0} we fix a small $\varepsilon_0$ and show that even when the solutions look like $\varepsilon_0 \varphi_0$ and the linearization breaks down the error is still small in terms of $\varepsilon'$.

Claim \ref{c:epsilon_close} show that the during the bulk of the process much error can not accumulate since it takes $O(1)$ amount of time.

Finally, Claim \ref{c:monotonicity} show that once we are $\varepsilon/2$ close to the stationary solution, we remain close forever.

Since in non of the phases through Claim \ref{c:epsilon_0} to Claim \ref{c:monotonicity} did we accumulate a prohibitive amount of error, we can get bellow a total error of $\varepsilon$ given that we set $\varepsilon'$ to be small enough and choosing a corresponding $\delta$.
}

We implicitly assume a), b) or c) throughout Claim \ref{c:epsilon'} to \ref{c:monotonicity}.

\begin{claim}
\label{c:epsilon'}
$\exists 0<\theta \leq 1, \ 0<\delta_0  \ \forall \epsilon',\eta>0 \ \exists 0<\delta \leq \delta_0 \ \forall u(0) \in \Delta$ (in Case a) $u(0) \in \Delta_I$) such that $0<\| u(0)\|_2 \leq \delta \ \Rightarrow \ \exists T \geq 0$ such that
\begin{align}
	\label{eq:epsilon'_1}
	u(T)=\varepsilon' \varphi_1+O \left((\varepsilon')^{1+\theta} \right) \ \textit{in } L^2([0,1])
\end{align}
while
\begin{align}
	\label{eq:epsilon'_2}
	\sup_{0 \leq t \leq T} \|u(t) \|_2 \leq \eta.
\end{align}  	
\end{claim}

In Claim \ref{c:epsilon'} $\eta$ and $\delta$ are the same as in Theorem \ref{t:main}, however, $\varepsilon'$ is dummy variable that plays a similar role to $\varepsilon$. Roughly speaking Claim \ref{c:epsilon'} states that after fixing $\varepsilon',\eta$ a solution $u$ with small enough initial condition will look like $u(T)=\varepsilon' \varphi_1+o(\varepsilon')$ for some $T$ while being smaller than $\eta$ on $[0,T]$. 

Next, we have two solutions $u_1,u_2$ where the initial conditions are the endpoints of Claim \ref{c:epsilon'}.

\begin{claim}
\label{c:epsilon_0}
Let $u_1,u_2$ two solutions such that
\begin{align}
\label{eq:epsilon_0_1}
u_1(0),u_2(0)=\varepsilon' \varphi_1+O \left((\varepsilon')^{1+\theta} \right) \ \textit{in } L^2([0,1])
\end{align}
for some $0<\theta<1$ Let $0<\varepsilon' \leq \varepsilon_0<1$. Then for time $t^*:=\frac{1}{\alpha_1} \log \frac{\varepsilon_0}{\varepsilon'}$
\begin{align}
\label{eq:epsilon_0_2}
u_1(t^*),u_2(t^*)=\varepsilon_0 \varphi_1+O \left((\varepsilon')^{1+\theta}\right) \in \textit{in } L^2([0,1])
\end{align}
while
\begin{align*}
	 \sup_{0 \leq t \leq t^*} \|u_1(t)-u_2(t) \|_2=O \left((\varepsilon')^\theta \right).
\end{align*}
\end{claim}

In Claim \ref{c:epsilon_0} we think about $\varepsilon_0$ as a small, but fixed (non vanishing) number that only depends on $W,\beta$ and $\gamma$. At the level of $\varepsilon_0$ the error between $u$ and $\varphi_1$ is small, but non vanishing, however, the error between $u_1$ and $u_2$ is $o(1)$.

After we reached the level $\varepsilon_0$ the renaming time to get $\varepsilon$ close to the endemic state $\psi$ will no longer depend on $\varepsilon'$ but merely on $\varepsilon$, justifying the step.

\begin{claim}
\label{c:epsilon_close}
Let $u_1,u_2$ be two solutions such that
\begin{align*}
	u_1(t^*),u_2(t^*)=&\varepsilon_0 \varphi_1+O \left((\varepsilon_0)^{1+\theta}\right) \in \textit{in } L^2([0,1]) \\
	\sup_{0 \leq t \leq t^*} \|u_1(t)-u_2(t) \|_2=&O \left((\varepsilon')^\theta \right).
\end{align*}
for some $0<\theta, \ 0<\varepsilon' \leq  \varepsilon_0 <1$.

Then if $\epsilon_0$ is small enough (depending only on $W,\beta$ and $\gamma$) one has $\forall 0<\varepsilon' \leq \varepsilon<1 \ \exists t^{**}=t^{**}(\varepsilon)$ such that
\begin{align}
\label{eq:epsilon_1}
(i=1,2) \ \|u_i(t^{**})-\psi \|_2 \leq \frac{\varepsilon}{2}
\end{align}
if $\gamma=0$
\begin{align}
	\label{eq:epsilon_1'}
	(i=1,2) \ \|u_i(t^{**})-\psi \|_\pi \leq \frac{\varepsilon}{2}
\end{align}
 if $\gamma>0$, while
\begin{align}
\label{eq:epsilon_2}
\sup_{0 \leq t \leq t^{**}} \|u_1(t)-u_2(t) \|_2=O \left((\varepsilon')^{\theta}e^{\alpha_1 t^{**}(\varepsilon)} \right).	
\end{align}
\end{claim}

The message of Claim \ref{c:epsilon_close} is that after fixing $\varepsilon$ we only constant amount of time to get from the level $\varepsilon_0$ to being $\varepsilon$ close to the endemic equilibrium $\psi$, hence, the accumulated error in this segment is still vanishing.

\begin{claim}
\label{c:monotonicity}
$\|u(t)-\psi  \|_\pi$ is monotone decreasing when $\gamma>0$ and $\|u(t)-\psi  \|_2$ is monotone decreasing when $\gamma=0$.
\end{claim}

\begin{corollary}
\label{corr:monotone}
	 Assume $\gamma>0$. If $u_1,u_2$ are two solutions such that
	\begin{align*}
 (i=1,2) \ \|u_i(0)-\psi \|_{\pi} \leq \frac{\varepsilon}{2}		
	\end{align*}
	then
	\begin{align*}
	\sup_{t \geq 0} \|u_1(t)-u_2(t) \|_2 \leq&  \sup_{t \geq 0} \|u_1(t)-\psi \|_\pi+\sup_{t \geq 0} \|u_2(t)-\psi \|_\pi= \\
	&\|u_1(0)-\psi \|_\pi+\|u_2(0)-\psi \|_\pi \leq \varepsilon. 
	\end{align*}
	
	Similarly, when $\gamma=0$ and
	\begin{align*}
		(i=1,2) \ \|u_i(0)-\psi \|_{2} \leq \frac{\varepsilon}{2}		
	\end{align*}
	then
	\begin{align*}
		\sup_{t \geq 0} \|u_1(t)-u_2(t) \|_2 \leq&  \sup_{t \geq 0} \|u_1(t)-\psi \|_2+\sup_{t \geq 0} \|u_2(t)-\psi \|_2= \\
		&\|u_1(0)-\psi \|_2+\|u_2(0)-\psi \|_2 \leq \varepsilon. 
	\end{align*}
\end{corollary}

\begin{figure}[t]
	\centerline{\includegraphics[scale=0.6]{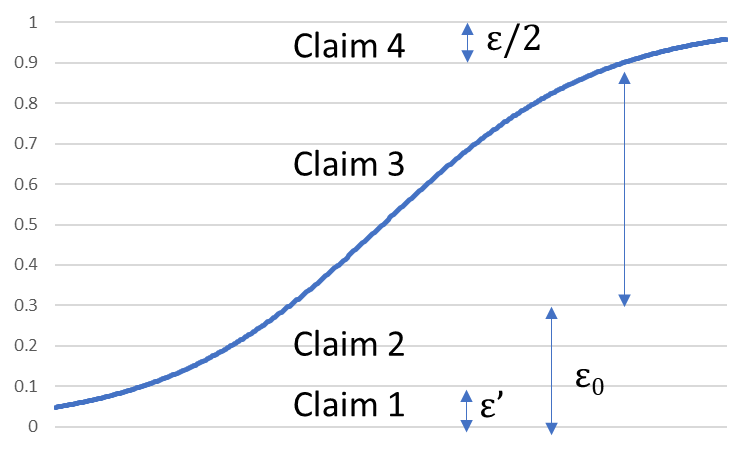}}
	\caption{A schematic representation of the scales on which Claim \ref{c:epsilon'} to \ref{c:monotonicity} operates. }
	\label{fig_claim}
\end{figure} 

\begin{proof} (Theorem \ref{t:main})

First we set $\delta_0$ in Claim \ref{c:epsilon'} and $\varepsilon_0$ in Claim \ref{c:epsilon_close}. They are constants that merely depend on $W,\beta$ and $\gamma$.

Next we choose $0<\varepsilon,\eta < 1$ to be arbitrarily small. Then, we set $\varepsilon'$ to be so small such that $O \left((\varepsilon')^{\theta}e^{\alpha_1 t^{**}(\varepsilon)} \right) \leq \varepsilon$ and $\varepsilon' \leq \varepsilon,\varepsilon_0$. Lastly, we choose $\delta \leq  \delta_0$ such that it is small enough for Claim \ref{c:epsilon'}.

For two solutions $u_1,u_2$ with $0<\|u_1(0) \|_2 \leq \delta$ we choose $t_1,t_2$ to be the appropriate $T$ in Claim \ref{c:epsilon'} respectively, hence $\|u_1(t_1)-u_2(t_2) \|_2 \leq \varepsilon$ while ($i=1,2$) $\sup_{0 \leq t \leq t_i} \|u_i(t) \|_2 \leq \eta$.
Claim \ref{c:epsilon_0}, \ref{c:epsilon_close} and Corollary \ref{corr:monotone} ensures that on the segments $[0,t^*], \ [t^*,t^{**}]$ and $[t^{**}, \infty[ $ the error $\|u_1(t_1+t)-u_2(t_2+t) \|_2$ remains below $\varepsilon$, concluding the proof. 
\end{proof}

\subsubsection{Reaching the level $\varepsilon'$}
\label{s:varepsilon'}
   
We fix some small $\varepsilon' , \eta>0$ to which we provide an propitiate $\delta>0$ controlling the size of the initial conditions. More precisely, we assume
\begin{align*}
0<  \|u(0) \|_2 \leq \delta.
\end{align*}

In this section $t_0$ will either be $0$ or it takes an other, positive value specified later.

Define the time it takes for the leading term in \eqref{eq:v_expanded} to reach the level $\varepsilon'$ from $c_1(t_0)$ as 
\begin{align}
\label{def:t_bar}
\bar{t}&:= \frac{1}{\alpha_1}\log \frac{\varepsilon'}{c(t_0)},
\end{align}
which is equivalent to $\tilde{c}_1(t_0+\bar{t})=\varepsilon'.$   

There are two crucial steps to make our heuristic rigorous until we reach the $\varepsilon'$-level: we should mitigate the error arising from linearization, and prove that the leading term in \eqref{eq:v_expanded} is indeed dominant at time $t_0+\bar{t}.$  The following two lemmas work towards these goals.
\begin{lemma}
\label{l:linear_error}
\begin{align}
\label{eq:linear_error}
\sup_{t_0 \leq t \leq t_0+\bar{t} } \|u(t)-v(t) \|_2 \leq \frac{\beta C}{\theta\alpha_1} \left(\frac{\|u(t_0) \|_2}{c_1(t_0)} \varepsilon'\right)^{1+\theta},
\end{align}
where $C,\theta>0$ are constants from Lemma \ref{l:bilinear}.
\end{lemma}

\begin{proof}(Lemma \ref{l:linear_error})
	
	Recall Lemma \ref{l:u_and_v} and \ref{l:bilinear}.
	\begin{align}
		\nonumber
		\|g(s) \|_2&=\beta \|u(s) \W u(s) \|_2 \leq \beta C \| u(s)\|_2^{1+\theta} \overset{u \leq v}{\leq} \beta C \| v(s)\|_2^{1+\theta} \\
		\nonumber
		&\overset{\eqref{eq:v_expanded}}{\leq} \beta C \| u(t_0)\|_2^{1+\theta}e^{ (1+\theta)\alpha_1(s-t_0)}\\
		\nonumber
		\|u(t)-v(t) \|_2 &\leq \int_{t_0}^{t} \left\|e^{\mathcal{A}(t-s)}g(s)  \right\|_2 \d s \leq \int_{t_0}^{t}e^{\alpha_1(t-s)} \left\|g(s)  \right\|_2 \d s \\
		\nonumber
		& \leq \beta C \| u(t_0)\|_2^{1+\theta}e^{\alpha_1(t-t_0)}\int_{t_0}^{t}e^{\theta \alpha_1(s-t_0)}  \d s \\
		\label{eq:linear_error2}
		& \leq \frac{\beta C}{\theta \alpha_1} \| u(t_0)\|_{2}^{1+\theta}e^{(1+\theta)\alpha_1(t-t_0)}
	\end{align}

	Thus,
	\begin{align*}
		\sup_{t_0 \leq t \leq t_0+\bar{t}}\|u(t)-v(t) \|_2 \leq \frac{\beta C}{\theta \alpha_1} \left(\| u(t_0)\|_2e^{\alpha_1\bar{t}} \right)^{1+\theta}=\frac{\beta C}{\theta \alpha_1} \left(\frac{\| u(t_0)\|_2}{c_1(t_0)} \varepsilon' \right)^{1+\theta}.
	\end{align*}	
\end{proof}

\begin{lemma}
\label{l:leading_term}

\begin{align}
\label{eq:leading_term}
v(t_0+ \bar{t})=\varepsilon' \left[\varphi_1+O  \left(\frac{\|u(t_0) \|_2}{c_1(t_0)}  e^{-(\alpha_1-\alpha_2) \bar{t}}\right) \right] \ \ \textit{in } L^2([0,1]).
\end{align}
\end{lemma}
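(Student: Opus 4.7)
The plan is to exploit directly the explicit eigenbasis expansion \eqref{eq:v_expanded}: this is the cleanest route since $\mathcal{A}=\beta\mathbb{W}-\gamma\mathbb{I}$ is self-adjoint on $L^2([0,1])$, so $(\varphi_k)_{k=1}^{\infty}$ is a bona fide orthonormal eigenbasis and Parseval is available. The key observation is that the defining relation \eqref{def:t_bar} for $\bar t$ was chosen precisely so that the $k=1$ term of the expansion at time $t_0+\bar t$ equals $\varepsilon'\varphi_1$; everything else is controlled by the spectral gap $\alpha_1-\alpha_2 = \beta(\lambda_1-\lambda_2) > 0$.

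Concretely, I would first evaluate \eqref{eq:v_expanded} at $t=t_0+\bar t$ and split the sum:
\begin{align*}
v(t_0+\bar t) = c_1(t_0)\,e^{\alpha_1\bar t}\varphi_1 + \sum_{k=2}^{\infty} c_k(t_0)\,e^{\alpha_k\bar t}\varphi_k = \varepsilon'\varphi_1 + R,
\end{align*}
where the second equality uses $c_1(t_0)e^{\alpha_1\bar t}=\varepsilon'$ by \eqref{def:t_bar} and $R$ denotes the tail $k\ge 2$. Because $\lambda_2\ge\lambda_k$ for $k\ge 2$ implies $\alpha_2\ge\alpha_k$, and $\bar t\ge 0$ (as $c_1(t_0)\le \|u(t_0)\|_2\le\delta\le\varepsilon'$ for small enough $\delta$), we have $e^{\alpha_k\bar t}\le e^{\alpha_2\bar t}$ uniformly in $k\ge 2$. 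Orthonormality then gives
\begin{align*}
\|R\|_2^2 = \sum_{k=2}^{\infty} c_k(t_0)^2 e^{2\alpha_k\bar t}
\le e^{2\alpha_2\bar t}\sum_{k=2}^{\infty} c_k(t_0)^2
\le e^{2\alpha_2\bar t}\|u(t_0)\|_2^2,
\end{align*}
so $\|R\|_2 \le e^{\alpha_2\bar t}\|u(t_0)\|_2$.

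Finally, I would rewrite this bound in the form stated in the lemma. Using once more the identity $\varepsilon' = c_1(t_0)e^{\alpha_1\bar t}$, one has
\begin{align*}
e^{\alpha_2\bar t}\|u(t_0)\|_2
= \varepsilon'\cdot\frac{\|u(t_0)\|_2}{c_1(t_0)}\,e^{-(\alpha_1-\alpha_2)\bar t},
\end{align*}
which is exactly the $O(\cdot)$ term in \eqref{eq:leading_term}, concluding the proof. There is no real obstacle here: the argument is essentially a bookkeeping exercise combining self-adjointness of $\mathcal{A}$, Parseval's identity, and the spectral gap. The only subtle point worth double-checking is the sign of $\bar t$ and the assertion that $\alpha_2$ dominates $\alpha_k$ for $k\ge 2$ even when some $\alpha_k$ are negative, but since $\bar t\ge 0$ this is immediate from $\alpha_2\ge \alpha_k$.
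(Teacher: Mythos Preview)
Your proof is correct and is essentially the same as the paper's: both expand $v(t_0+\bar t)$ in the eigenbasis, use $c_1(t_0)e^{\alpha_1\bar t}=\varepsilon'$ to identify the leading term, and bound the remainder via Parseval together with $\alpha_k\le\alpha_2$ for $k\ge2$. The only cosmetic difference is that the paper factors out $\varepsilon'$ before estimating the tail, whereas you bound $\|R\|_2$ first and then rewrite; the arithmetic is identical.
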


\begin{proof}(Lemma \ref{l:leading_term})
	
	\begin{align*}
		v(t_0+\bar{t}) &\overset{\eqref{eq:v_expanded}}{=}c_1(t_0)e^{\alpha_1\bar{t}}\varphi_1+\sum_{k>1}c_k(t_0)e^{\alpha_k \bar{t}}\varphi_k\\
		&=\underbrace{c_1(t_0)e^{\alpha_1\bar{t}}}_{=\varepsilon'}\left(\varphi_1+\frac{1}{c_1(t_0)}\sum_{k>1}c_k(t_0)e^{-(\alpha_1-\alpha_k) \bar{t}}\varphi_k \right)
	\end{align*}	
	
	\begin{align*}
		&\left \|\frac{1}{c_1(t_0)}\sum_{k>1}c_k(t_0)e^{-(\alpha_1-\alpha_k) \bar{t}}\varphi_k \right \|_2^2  = \frac{1}{c_1^2(t_0)}\sum_{k>1}c_k^2(t_0)e^{-2(\alpha_1-\alpha_k) \bar{t}}\leq \\
		& \frac{e^{-2(\alpha_1-\alpha_2) \bar{t}}}{c_1^2(t_0)}\sum_{k>1}c_k^2(t_0) \leq \left(\frac{\| u(t_0)\|_2}{c_1(t_0)} \right)^2e^{-2(\alpha_1-\alpha_2) \bar{t}}
	\end{align*}
\end{proof}

\begin{remark}
\label{r:counter_example}

The problem with Lemma \ref{l:linear_error} and \ref{l:leading_term} is the appearance of the expression $\frac{\|u(t_0) \|_2}{c_1(t_0)}$ in the error terms which unfortunately can be arbitrarily large. One such example is the homogeneous population $W(x,y) \equiv 1$ with 
\begin{align}
\label{eq:intial_conentration}
u(t_0,x)=\delta \1{0 \leq x \leq \delta};
\end{align}
then
\begin{align*}
c_1(t_0)&=\int_{0}^{1} u(t_0,x)\underbrace{\varphi_1(x)}_{=1} \d x=\delta^2 \\
\|u(t_0) \|_2^2&=\int_{0}^{1}u^2(t_0,x) \d x= \delta^3 \\
\frac{\|u(t_0) \|_2}{c_1(t_0)}&=\delta^{-\frac{1}{2}} \to \infty \ \textit{as } \delta \to 0^+.
\end{align*} 

Note that
$$\|u(t_0) \|_2^2=\sum_{k=1}^{\infty}c_k^2(t_0), $$
hence, $\frac{c_1(t_0)}{\|u(t_0) \|_2}$ measures how much weight the $\varphi_1$ component has initially, resulting in large error terms when being small.
\end{remark}

There are two ways to deal with this problem.

First is setting $t_0=0$ to be the beginning and only considering small initial conditions where $\varphi_1$ already has enough weight, or, in other words, assuming $\frac{\| u(0)\|_2}{c_1(0)} \leq K$ for some constant $K$. This limits the class of initial conditions we may consider.

This approach works well for the construction of eternal solutions where the initial condition is chosen to be roughly $\varepsilon \varphi_1$, making the $\varphi_1$ component dominant from the beginning (see the proof of Theorem \ref{t:existence} in Section \ref{s:proof_eternal}).

Another important case when such an assumption holds naturally is when $W$ is discrete, as we are only considering piece-wise constant initial conditions from $\Delta_I$ artificially excluding counterexamples like in Remark \ref{r:counter_example}.

\begin{lemma}
\label{l:discrite_initial_bound}
Assume $W$ discrete with $J:=\min_{i} |I_i|$ and $u(0) \in \Delta_I \setminus \{0\}.$ Then
\begin{align*}
\left(\frac{\|u(0) \|_2}{c_1(0)} \right)^2 \leq \frac{1}{m^2 J}.
\end{align*}
\begin{proof}(Lemma \ref{l:discrite_initial_bound})
	
	\begin{align*}
		\left(\frac{\| u(0)\|_2}{c_1(0)} \right)^2&=\frac{\int_{0}^{1}u^2(0,x) \d x}{\left( \int_{0}^{1} \varphi_1(x) u(0,x) \d x \right)^2} \leq \frac{1}{m^2} \int_{0}^{1} \left( \frac{u(0,x)}{\|u(0) \|_1} \right)^2 \d x\\
		&=:\frac{1}{m^2} \int_{0}^{1} f^2(x) \d x \leq \frac{\| f\|_{\infty}}{m^2}
	\end{align*}	
	as $f(x):=\frac{u(0,x)}{\|u(t_0) \|_1}$ is a density function.
	
	It remains to give an upper bound on $f(x).$
	\begin{align*}
		(x \in I_i) \ \ f(x)= \frac{z_i(0)}{\sum_{j=1}^n z_j(0)|I_j|} \leq \frac{1}{J} \frac{z_i(0)}{\sum_{j=1}^n z_j(0)} \leq \frac{1}{J}.
	\end{align*}
\end{proof}

\end{lemma}

\begin{corollary}
Assuming $\left(\frac{\|u(0) \|_2}{c_1(0)} \right)^2$ is bounded at $t_0=0$, Lemmas \ref{l:linear_error} and \ref{l:leading_term} lead to
\begin{align}
	\label{eq:epsilon'_level1}
	u(\bar{t})=\varepsilon' \left[\varphi_1+O\left((\varepsilon')^{\theta}+e^{-(\alpha_1-\alpha_2)\bar{t}} \right) \right] \ \ \textit{in } L^2([0,1]).
\end{align}
It is worth noting that
$$\bar{t}=\frac{1}{\alpha_1}\log \frac{\varepsilon'}{c(0)} \to  \infty \ \ \textit{as } \delta \to 0^{+}, $$ 	
since $c_1(0) \leq \|u(0) \|_2 \leq \delta.$ This will be relevant for Corollary \ref{cor:t_bar}.
\end{corollary}

The other method is to initiate a new period and set $t_0=\hat{t},$ where $\hat{t}$ is the time till $\varphi_1$ receives enough weight so that $\frac{\| u(\hat{t})\|_2}{c_1(\hat{t})}=O(1).$ The potential danger with this approach is that this event might happen \emph{later} than when we reach the level $\varepsilon'$.

\begin{heuristics}
With the choice of
\begin{align}
	\label{eq:t_hat}
	\hat{t}:= \frac{1}{\alpha_1-\alpha_2}\log \frac{\| u(0)\|_2}{c_1(0)}
\end{align}
we can guarantee
\begin{align*}
	\left(\frac{\|v(\hat{t}) \|_2}{\tilde{c}_1(\hat{t})} \right)^2&=1+\frac{1}{c_1^2(0)}\sum_{k>1}c_k^2(0)e^{-2(\alpha_1-\alpha_k)\hat{t}} \\
	& \leq 1+\left( \frac{\|u(0) \|_2}{c_1(0)}\right)^2e^{-2(\alpha_1-\alpha_2)\hat{t}}=2
\end{align*}
where $v$ here stands for the solution of \eqref{eq:v} with initial condition $v(0)=u(0).$ 
\end{heuristics}

However, what we need to bound is $\frac{\|u(\hat{t}) \|_2}{c_1(\hat{t})}$ instead, which can be achieved via the following lemma:
\begin{lemma}
\label{l:M_t_hat}
\begin{align}
\label{eq:M_t_hat}
\left(\frac{\|u(\hat{t}) \|_2}{c_1(\hat{t})} \right)^2 \leq \frac{2}{\left(1-\frac{\beta}{c_1(0)}\int_{0}^{\hat{t}}e^{-\alpha_1 s} \langle u(s)\W u(s), \varphi_1 \rangle \d s \right)^2}
\end{align}
\end{lemma}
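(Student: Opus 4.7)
\medskip

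\noindent\textbf{Proof plan for Lemma \ref{l:M_t_hat}.} The strategy is to track the leading coefficient $c_1(t)=\langle u(t),\varphi_1\rangle$ exactly along the nonlinear flow, compare it to the linear counterpart $\tilde{c}_1(t)$, and then sandwich $u$ above by $v$ via cooperativity.

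\smallskip

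\noindent\emph{Step 1: ODE for $c_1(t)$.} I would test \eqref{eq:u} against $\varphi_1$. Since $\W$ is self-adjoint and $\W\varphi_1=\lambda_1\varphi_1$,
\begin{align*}
\frac{\d}{\d t}c_1(t)
&=\beta\langle(1-u(t))\W u(t),\varphi_1\rangle-\gamma\langle u(t),\varphi_1\rangle\\
&=\beta\lambda_1 c_1(t)-\gamma c_1(t)-\beta\langle u(t)\W u(t),\varphi_1\rangle\\
&=\alpha_1 c_1(t)-\beta\langle u(t)\W u(t),\varphi_1\rangle.
\end{align*}
Multiplying by the integrating factor $e^{-\alpha_1 t}$ and integrating from $0$ to $\hat t$ gives the closed form
\begin{align*}
c_1(\hat t)=e^{\alpha_1\hat t}c_1(0)\!\left(1-\frac{\beta}{c_1(0)}\int_0^{\hat t}e^{-\alpha_1 s}\langle u(s)\W u(s),\varphi_1\rangle\,\d s\right).
\end{align*}

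\smallskip

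\noindent\emph{Step 2: comparison with the linearization.} For $v$ solving \eqref{eq:v} with $v(0)=u(0)$, the Fourier coefficients evolve as $\tilde{c}_k(t)=c_k(0)e^{\alpha_k t}$, so in particular $\tilde{c}_1(\hat t)=e^{\alpha_1\hat t}c_1(0)$. Hence
\begin{align*}
\frac{c_1(\hat t)}{\tilde{c}_1(\hat t)}=1-\frac{\beta}{c_1(0)}\int_0^{\hat t}e^{-\alpha_1 s}\langle u(s)\W u(s),\varphi_1\rangle\,\d s.
\end{align*}

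\smallskip

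\noindent\emph{Step 3: bounding the $L^2$ norm.} By cooperativity (the remark recorded just above Lemma \ref{l:discrite_initial_bound}), $0\le u(t)\le v(t)$ pointwise for $t\ge 0$, so $\|u(\hat t)\|_2\le\|v(\hat t)\|_2$. Combining with the already-established bound $\|v(\hat t)\|_2^2/\tilde{c}_1(\hat t)^2\le 2$ from the definition of $\hat t$ in \eqref{eq:t_hat},
\begin{align*}
\left(\frac{\|u(\hat t)\|_2}{c_1(\hat t)}\right)^{\!2}
\le\frac{\|v(\hat t)\|_2^2}{\tilde{c}_1(\hat t)^2}\cdot\frac{\tilde{c}_1(\hat t)^2}{c_1(\hat t)^2}
\le \frac{2}{\bigl(c_1(\hat t)/\tilde{c}_1(\hat t)\bigr)^2},
\end{align*}
which is exactly \eqref{eq:M_t_hat} after plugging in the ratio from Step 2.

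\smallskip

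\noindent\emph{Expected difficulty.} The calculation itself is essentially a variation-of-constants formula, so there is little real obstacle. The only mild subtlety is that the bound is vacuous unless the parenthesised quantity in the denominator is strictly positive (so that the square is meaningful as an upper bound rather than being trivially large); however, this quantity equals $c_1(\hat t)/\tilde{c}_1(\hat t)$, which is automatically positive since $c_1(\hat t)>0$ for any nontrivial nonnegative solution. Therefore the statement is just the quantitative form of ``the nonlinear coefficient $c_1$ lags behind its linear surrogate by the cumulative quadratic correction,'' together with the pointwise sandwich $u\le v$.
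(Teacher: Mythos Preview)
Your proof is correct and follows essentially the same route as the paper: both derive the exact formula $c_1(\hat t)=c_1(0)e^{\alpha_1\hat t}\bigl(1-\frac{\beta}{c_1(0)}\int_0^{\hat t}e^{-\alpha_1 s}\langle u(s)\W u(s),\varphi_1\rangle\,\d s\bigr)$, bound $\|u(\hat t)\|_2$ above by $\|v(\hat t)\|_2$, and invoke the already-shown estimate $\|v(\hat t)\|_2^2/\tilde c_1(\hat t)^2\le 2$. The only cosmetic difference is that you obtain the $c_1$-formula by testing \eqref{eq:u} against $\varphi_1$ and using an integrating factor, whereas the paper projects the Duhamel representation of Lemma~\ref{l:u_and_v} onto $\varphi_1$; these are the same computation.
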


\begin{proof}(Lemma \ref{l:M_t_hat})

Recall Lemma \ref{l:u_and_v}. Let $v$ be the solution of \eqref{eq:v} with $v(0)=u(0).$

\begin{align*}
	u(t)&=v(t)-\beta\int_{0}^{t}e^{\mathcal{A}(t-s)}u(s)\W u(s) \d s\\
	&=v(t)-\beta\int_{0}^{t}\sum_{k=1}^{\infty}\langle u(s)\W u(s), \varphi_k \rangle \underbrace{e^{\mathcal{A}(t-s)} \varphi_k}_{e^{\alpha_k(t-s)}\varphi_k}  \d s\\
	c_1(t)&=c_1(0)e^{\alpha_1 t}-\beta\int_{0}^{t}e^{\alpha_1(t-s)}\langle u(s)\W u(s), \varphi_1 \rangle \d s \\
	&=c_1(0)e^{\alpha_1 t} \left(1-\frac{\beta}{c_1(0)}\int_{0}^{t}e^{-\alpha_1 s}\langle u(s)\W u(s), \varphi_1 \rangle \d s \right)
\end{align*}

$\|u(t) \|_2$ will be bounded in the usual way.
\begin{align*}
	\| u(t)\|_2^2 & \leq \| v(t)\|_2^2 \leq c_1^2(0)e^{2\alpha_1 t}+\|u(0) \|_2^2e^{2 \alpha_2 t}
\end{align*}

This results in
\begin{align*}
	\left(\frac{\|u(\hat{t}) \|_2}{c_1(\hat{t})}\right)^2 &\leq \frac{1+\left(\frac{\| u(0)\|_2}{c_1(0)}\right)^2e^{-2(\alpha_1-\alpha_2)\hat{t}}}{\left(1-\frac{\beta}{c_1(0)}\int_{0}^{\hat{t}}e^{-\alpha_1 s}\langle u(s)\W u(s), \varphi_1 \rangle \d s \right)^2} \\
	& \overset{\eqref{eq:t_hat}}{=} \frac{2}{\left(1-\frac{\beta}{c_1(0)}\int_{0}^{\hat{t}}e^{-\alpha_1 s}\langle u(s)\W u(s), \varphi_1 \rangle \d s \right)^2}. 
\end{align*}
\end{proof}

Thus, it remains to bound the denominator of \eqref{eq:M_t_hat}. We will do so separately for assumption b) and c) in the following two lemmas.

\begin{lemma}
\label{l:rank_1}	
	
Assume $W$ is rank-$1$, $\varphi_1(x) \geq m>0,$ and $\varphi_1 \in L^{2+\rho}([0,1])$ for some $\rho>0$.

Then there is a $\delta_0$ (depending only on $W,\beta$ and $\gamma$) such that for any $0<\delta \leq \delta_0$ we have
$$\frac{\beta}{c_1(0)}\int_{0}^{\hat{t}}e^{-\alpha_1 s} \langle u(s)\W u(s), \varphi_1 \rangle \d s \leq \frac{1}{2}. $$
\end{lemma}

\begin{proof}(Lemma \ref{l:rank_1})
	
	Note that $\W u(s)=\lambda_1 \varphi_1 c_1(s) \leq  \lambda_1 \varphi_1 c_1(0)e^{\alpha_1 s}.$ We have
	\begin{align*}
		\frac{\beta }{c_1(0)}\int_{0}^{\hat{t}}e^{-\alpha_1 s} \langle u(s) \W u(s), \varphi_1 \rangle \d s \leq  \beta \lambda_1 \int_{0}^{\hat{t}} \left \langle u(s) , \varphi_1^2 \right \rangle \d s
	\end{align*}
	
	Use Hölder's inequality with $p=1+\frac{2}{\rho}, \ q=1+\frac{\rho}{2}.$
	\begin{align*}
		\left \langle u(s) , \varphi_1^2 \right \rangle \leq \|u(s) \|_p \left \|\varphi_1^2 \right \|_q.
	\end{align*}
	Since $\varphi_1 \in L^{2+\rho}([0,1])$
	\begin{align*}
		\left \|\varphi_1^2 \right \|_q^q=\int_{0}^{1} \varphi_1^{2q}(x) \d x=\int_{0}^{1} \varphi_1^{2+\rho}(x) \d x<\infty.
	\end{align*}
	As for the first term, since $0 \leq u(s,x) \leq 1$ and $1<p<\infty$ we have
	\begin{align*}
		\|u(s) \|_p^p=&\int_{0}^{1}\left(u(s,x) \right)^p \d x \leq \int_{0}^{1}u(s,x) \d x \leq \frac{1}{m} \int_{0}^{1}\varphi_1(x)u(s,x) \d x\\
		=&\frac{1}{m}c_1(s) \leq \frac{1}{m}c_1(0)e^{\alpha_1 s}
	\end{align*}
	leading to
	\begin{align*}
		\int_{0}^{\hat{t}} \left \langle u(s) , \varphi_1^2 \right \rangle \d s \leq \left(\frac{c_1(0)}{m} \right)^{\frac{1}{p}}\left \| \varphi_1^2 \right \|_q \int_{0}^{\hat{t}} e^{\frac{\alpha_1}{p}s} \d s \leq \frac{p}{\alpha_1}m^{-\frac{1}{p}} \left \| \varphi_1^2 \right \|_q \left(c_1(0)e^{\alpha_1 \hat{t}} \right)^{\frac{1}{p}}.
	\end{align*}
	
	Based on Lemma \ref{l:c1_bound},
	\begin{align}
		\label{eq:t_hat_exponent}
		\begin{split}
			e^{\alpha_1 \hat{t}}=& \left(\frac{\| u(0)\|_2}{c_1(0)}\right)^{\frac{\alpha_1}{\alpha_1-\alpha_2}}=O \left(c_1(0)^{-\frac{\alpha_1}{2(\alpha_1-\alpha_2)}}\right)\\
			c_1(0)e^{\alpha_1 \hat{t}}=&O \left(c_1(0)^{1-\frac{\alpha_1}{2(\alpha_1-\alpha_2)}}\right) \to 0
		\end{split}
	\end{align}
	as $\delta \to 0^{+}$ since
	\begin{align*}
		\frac{\alpha_1}{2(\alpha_1-\alpha_2)}=\frac{\beta \lambda_1-\gamma}{2\beta\lambda_1}=\frac{1}{2} \left( 1-\frac{\gamma}{\beta \lambda_1}\right)<1.
	\end{align*} 
	Hence, we can find a small enough $\delta_0$ such that $0<\delta \leq \delta_0$ implies
	$$\frac{\beta}{c_1(0)}\int_{0}^{\hat{t}}e^{-\alpha_1 s} \langle u(s) \W u(s), \varphi_1 \rangle \d s \leq \frac{1}{2}. $$
\end{proof}

\begin{lemma}
\label{l:bounded_W}
Assume $W(x,y) \leq M,$ $\varphi_1(x) \geq m>0$  and $\beta \lambda_1 < \gamma+2 \beta(\lambda_1-\lambda_2).$ Then there is a $\delta_0$ (depending only on $W,\beta$ and $\gamma$) such that 
for any $0<\delta \leq \delta_0$ we have
$$\frac{\beta}{c_1(0)}\int_{0}^{\hat{t}}e^{-\alpha_1 s} \langle u(s)\W u(s), \varphi_1 \rangle \d s \leq \frac{1}{2}.$$
\end{lemma}
\begin{proof} (Lemma \ref{l:bounded_W})
	Since $W$ is bounded and $\varphi_1(x) \geq m$,
	\begin{align}
		\label{eq:Wu_upper_bound}
		\begin{split}
			\W u(t,x)=&\int_{0}^{1} W(x,y)u(t,y) \d y  \leq \frac{M}{m} \int_{0}^{1} m u(t,y) \d y \leq \\
			& \frac{M}{m} \int_{0}^{1} \varphi_1(y) u(t,y) \d y= \frac{M}{m} \langle u(t),\varphi_1 \rangle=\frac{M}{m} c_1(t)
		\end{split}
	\end{align}
	uniformly in $x \in [0,1].$
	
	\begin{align*}
		&\frac{\beta}{c_1(0)}\int_{0}^{\hat{t}}e^{-\alpha_1 s} \langle u(s) \W u(s), \varphi_1 \rangle \d s \leq \frac{\beta M}{mc_1(0) }\int_{0}^{\hat{t}}e^{-\alpha_1 s}c_1(s) \langle u(s) , \varphi_1 \rangle \d s=\\
		&\frac{\beta M}{mc_1(0) }\int_{0}^{\hat{t}}e^{-\alpha_1 s}c_1^2(s) \d s \leq \frac{\beta M}{mc_1(0) }\int_{0}^{\hat{t}}e^{-\alpha_1 s}c_1^2(0)e^{ 2 \alpha_1 s} \d s=\\
		& \frac{\beta Mc_1(0)}{m }\int_{0}^{\hat{t}}e^{  \alpha_1 s} \d s \leq \frac{\beta Mc_1(0)}{ \alpha_1 m }e^{\alpha_1 \hat{t}}
	\end{align*}
	
	Using \eqref{eq:t_hat_exponent}
	\begin{align*}
		c_1(0)e^{\alpha_1 \hat{t}}=O \left(c_1(0)^{1-\frac{\alpha_1}{2(\alpha_1-\alpha_2)}}\right) \to 0
	\end{align*}
	as $\delta \to 0^{+}$ since $\frac{\alpha_1}{2(\alpha_1-\alpha_2)}<1$ due to our assumption. Hence, we can find a small enough $\delta_0$ such that $0<\delta \leq \delta_0$ implies
	$$\frac{\beta}{c_1(0)}\int_{0}^{\hat{t}}e^{-\alpha_1 s} \langle u(s) \W u(s), \varphi_1 \rangle \d s \leq \frac{1}{2}. $$
\end{proof}

\begin{corollary}
Assuming the denominator of \eqref{eq:M_t_hat} is bounded from below, Lemmas \ref{l:linear_error}, \ref{l:leading_term} and \ref{l:M_t_hat} result in
\begin{align}
	\label{eq:epsilon'_level2}
	u(\hat{t}+\bar{t})=\varepsilon' \left[\varphi_1+O\left((\varepsilon')^{\theta}+e^{-(\alpha_1-\alpha_2)\bar{t}} \right) \right] \ \ \textit{in } L^2([0,1]).
\end{align}
\end{corollary}

Now we show that $\bar{t}$ can be arbitrarily large as $\delta$ decreases, that is, $\varphi_1$ starts to dominate before we reach the level $\varepsilon'$.
\begin{lemma}
\label{l:domination_time}

Assume $\beta \lambda_1<\gamma+2\beta(\lambda_1-\lambda_2).$ Then
$ \bar{t} \to \infty $ as $\delta \to 0^{+}.$
\end{lemma}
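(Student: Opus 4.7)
The plan is to reduce the statement to showing $c_1(t_0) \to 0$ as $\delta \to 0^+$, since then $\bar{t} = \alpha_1^{-1}\log(\varepsilon'/c_1(t_0)) \to \infty$. In the first approach of Section \ref{s:varepsilon'} (where $t_0 = 0$) this is immediate, as $c_1(0) \leq \|u(0)\|_2 \leq \delta$ requires no spectral assumption. The substantive case is the second approach, $t_0 = \hat{t}$, where a priori $c_1(\hat{t})$ need not be small and the hypothesis $\beta\lambda_1 < \gamma + 2\beta(\lambda_1-\lambda_2)$ will enter.

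To bound $c_1(\hat{t})$, I will invoke the cooperativity estimate $u(t) \leq v(t)$, which gives $c_1(\hat{t}) \leq \tilde{c}_1(\hat{t}) = c_1(0) e^{\alpha_1 \hat{t}}$. Setting $\xi := \|u(0)\|_2/c_1(0) \geq 1$ and substituting \eqref{eq:t_hat} simplifies this to
\begin{align*}
c_1(\hat{t}) \leq \|u(0)\|_2 \cdot \xi^{\alpha_2/(\alpha_1-\alpha_2)}.
\end{align*}
When $\alpha_2 \leq 0$ the right-hand side is already at most $\|u(0)\|_2 \leq \delta$. Otherwise I will bound $\xi$ using the uniform positivity $\varphi_1 \geq m$ together with the domain constraint $0 \leq u(0) \leq 1$: pointwise $u(0)^2 \leq u(0)$ yields $\|u(0)\|_2^2 \leq \|u(0)\|_1$, while $\varphi_1 \geq m$ gives $c_1(0) \geq m\|u(0)\|_1$, so $\xi \leq (m\|u(0)\|_2)^{-1}$. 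Substituting back,
\begin{align*}
c_1(\hat{t}) \leq m^{-\alpha_2/(\alpha_1-\alpha_2)} \|u(0)\|_2^{(\alpha_1-2\alpha_2)/(\alpha_1-\alpha_2)}.
\end{align*}

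The hypothesis $\beta\lambda_1 < \gamma+2\beta(\lambda_1-\lambda_2)$ is precisely $\alpha_1 > 2\alpha_2$, which makes the exponent of $\|u(0)\|_2$ strictly positive; combined with $\|u(0)\|_2 \leq \delta$, this gives $c_1(\hat{t}) \to 0$ and hence $\bar{t} \to \infty$. The main obstacle is the conversion of the $L^2$-smallness of $u(0)$ into a usable lower bound for $c_1(0)$: the concentrated example \eqref{eq:intial_conentration} shows that $\xi$ can diverge like $\delta^{-1/2}$, so its power must be dominated by a positive power of $\|u(0)\|_2$, and it is exactly the spectral gap condition $\alpha_1 > 2\alpha_2$ that supplies this domination.
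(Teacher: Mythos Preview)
Your proof is correct and follows essentially the same route as the paper's: both bound $c_1(\hat{t}) \leq c_1(0)e^{\alpha_1\hat{t}}$ via cooperativity, then control the ratio $\|u(0)\|_2/c_1(0)$ using the uniform positivity $\varphi_1\geq m$ together with $u(0)^2\leq u(0)$ (this is exactly the content of Lemma~\ref{l:c1_bound}), and finally observe that the spectral condition is equivalent to $\alpha_1>2\alpha_2$, which makes the resulting exponent positive. The only cosmetic differences are that the paper parametrizes the final bound in terms of $c_1(0)$ rather than $\|u(0)\|_2$, and does not separate the easy case $\alpha_2\leq 0$ explicitly.
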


\begin{remark}
The conditions of Lemma \ref{l:domination_time} can be interpreted the following way: when the spectral gap is large, the coefficient of $\varphi_1$ can increase more rapidly than the coefficients of the other components, enabling
 $\varphi_1$ to dominate before reaching the level $\varepsilon'$.

For rank-$1$ graphons, $\lambda_2=0,$ hence, the condition of Lemma \ref{l:domination_time} trivially holds under assumption c). In assumption b) said condition is explicitly required.
\end{remark}

\begin{proof}(Lemma \ref{l:domination_time})
	\begin{align*}
		\bar{t}=\frac{1}{\alpha_1} \log \frac{\varepsilon'}{c_1(\hat{t})}=\frac{1}{\alpha_1} \log \varepsilon' +\frac{1}{\alpha_1} \log \frac{1}{c_1(\hat{t})}
	\end{align*}
	Since $\varepsilon'$ is fixed, $\frac{1}{\alpha_1} \log \varepsilon'$ is just a constant and we can neglect it.	
	
	\begin{align*}
		&\frac{1}{\alpha_1} \log \frac{1}{c_1(\hat{t})} \geq \frac{1}{\alpha_1} \log \frac{1}{c_1(0)e^{\alpha_1\hat{t}}}=\frac{1}{\alpha_1} \log \frac{1}{c_1(0)}-\hat{t}=\\
		&-\frac{1}{\alpha_1} \log c_1(0)-\frac{1}{\alpha_1-\alpha_2}\log \frac{\| u(0)\|_2}{c_1(0)}=-\log\left[ c_1(0)^{\frac{1}{\alpha_1}} \left(\frac{\|u(0) \|_2}{c_1(0)}\right)^{\frac{1}{\alpha_1-\alpha_2}} \right]
	\end{align*}
	
	Clearly, it is enough to show that the argument of the logarithm is small. Using Lemma \ref{l:c1_bound} with $\nu:=\frac{1}{\alpha_1}-\frac{1}{2(\alpha_1-\alpha_2)}$ shows
	\begin{align*}
		c_1(0)^{\frac{1}{\alpha_1}} \left(\frac{\|u(0) \|_2}{c_1(0)}\right)^{\frac{1}{\alpha_1-\alpha_2}} = O \left( c_1^{\nu}(0) \right).
	\end{align*}
	
	Observe
	\begin{align*}
		\nu >& 0 \\
		\frac{1}{\alpha_1} >& \frac{1}{2(\alpha_1-\alpha_2)} \\
		\alpha_1 <& 2(\alpha_1-\alpha_2) \\
		\beta \lambda_1-\gamma<&2\beta(\lambda_1-\lambda_2) 
	\end{align*}
	making $\nu$ positive by the assumption.
	
	Therefore $\bar{t} \geq -\nu \log c_1(0)+O(1) \to \infty$ as $\delta \to 0^+.$ 
\end{proof}

\begin{corollary}
\label{cor:t_bar}
As $\bar{t} \to \infty$, we may choose $\delta$ to be small enough so that $e^{-(\alpha_1-\alpha_2)\bar{t}} \leq (\varepsilon')^{\theta}$, hence, by \eqref{eq:epsilon'_level1} or \eqref{eq:epsilon'_level2},
\begin{align*}
	u(T)=\varepsilon' \varphi_1+O\left(\left(\varepsilon' \right)^{1+\theta} \right)  \ \ \textit{in } L^2([0,1]) 
\end{align*}  
where  $T=t_0+\bar{t}.$
\end{corollary}

This concludes \eqref{eq:epsilon'_1} of Claim \ref{c:epsilon'}. We are left with showing \eqref{eq:epsilon'_2}.

\begin{align*}
\sup_{t_0 \leq t \leq t_0+\bar{t}}\|u(t) \|_2 \leq \sup_{t_0 \leq t \leq t_0+ \bar{t}} \|v(t) \|_2 \leq \|u(t_0) \|_2e^{\alpha_1 \bar{t}}=\frac{\|u(t_0) \|_2}{c_1(t_0)}\varepsilon' \leq \eta
\end{align*}
when $\varepsilon'$ is small enough based on Lemma \ref{l:discrite_initial_bound} - \ref{l:bounded_W}. Under assumption a) when $t_0=0$   \eqref{eq:epsilon'_2} is already satisfied, otherwise, one must check the interval $[0,\hat{t}]$ as well. That is only needed when we assume b) or c) in which case $\gamma<\lambda_1 \beta<\gamma +2\beta(\lambda_1-\lambda_2)$ holds.

Let $v$ be the solution of \eqref{eq:v} with initial condition $v(0)=u(0).$ Using \eqref{eq:t_hat_exponent},
\begin{align}
\nonumber
\|u(t) \|_2^2 \leq& \|v(t) \|_2^2=c_1^2(0)e^{2\alpha_1 t}+\sum_{k>1}c_k^2(0)e^{\alpha_k t} \leq c_1(0)e^{2\alpha_{1}\hat{t}}+\|u(0)\|_2^2 e^{2\alpha_2 \hat{t}} \\
\nonumber
=&c_1^2(0)e^{2\alpha_1 \hat{t}} \left(1+\underbrace{ \left(\frac{\|u(0) \|_2}{c_1(0)}e^{-(\alpha_1-\alpha_2)\hat{t}} \right)^2}_{=1} \right)\\
\label{eta_bound}
\sup_{0 \leq t \leq \hat{t}}\|u(t) \|_2 \leq & \sqrt{2} c_1(0)e^{\alpha_1\hat{t}}=O \left(c_1(0)^{1-\frac{\alpha_1}{2(\alpha_1-\alpha_2)}}\right) \to 0
\end{align} 
as $\delta \to 0+,$ thus, it is smaller than $\eta$ for small enough $\delta>0.$

Thus, we showed \eqref{eq:epsilon'_2} concluding the proof of Claim \ref{c:epsilon'}.

\subsubsection{Reaching the level $\varepsilon_0$}
\label{s:epsilon_0}

In this section we want to reach a level $\varepsilon_0$ where $\varepsilon_0$ is still relatively small, but  does not depend $\varepsilon'$.

Applying an appropriate time translation, we can set
$$
u_1(0), u_2(0)=\varepsilon'\varphi_1+O\left( (\varepsilon')^{1+\theta}\right).
$$

When $\varepsilon_0$ is sufficiently small, $u_1$ and $u_2$ can still be approximated by the linearized versions $v_1,v_2$, although, since $\varepsilon_0$ is set at a constant value (depending only on $W,\beta,\gamma$), a macroscopic error term remains.

\begin{heuristics}
\label{h:error_prop}
Starting from $v(0)=\varepsilon' \varphi_1$ we need
\begin{align}
	\label{eq:t*}
	t^*:=\frac{1}{\alpha_1}\log \frac{\varepsilon_0}{\varepsilon'}
\end{align}
time to reach level $\varepsilon_0$. Also, in the linear system the error propagation rate is at most $\alpha_1$ hence
\begin{align*}
	\sup_{0 \leq t \leq t^*}\|v_1(t^*)-v_2(t^*) \|_2 \leq \|v_1(0)-v_2(0) \|_2e^{\alpha_1 t^*}=O\left((\varepsilon')^{1+\theta} \frac{\varepsilon_0}{\varepsilon'}\right)=O\left((\varepsilon')^{\theta} \right)
\end{align*}
and the error could still be arbitrarily small even after reaching level $\varepsilon_0.$
\end{heuristics}

Heuristic \ref{h:error_prop} works for the nonlinear dynamics too due to Lemma \ref{l:elsilon_0_error} showing
\begin{align*}
	\sup_{0 \leq t \leq t^*} \|u_1(t)-u_2(t) \|_2=O \left((\varepsilon')^{\theta} \right).	
\end{align*}
in accordance with \eqref{eq:epsilon_0_2}.

\begin{lemma}
\label{l:epsilon_0_level}
For  $0<\varepsilon' \leq \varepsilon_0$
$$u_1(t^*),u_2(t^*)=\varepsilon_0 \varphi+O\left(\varepsilon_0^{1+\theta} \right) \ \ \textit{in } L^2([0,1]), $$
hence, for small enough $\varepsilon_0>0$ 
$$\|u_1(t^*) \|_2, \|u_2(t^*) \|_2 \geq \frac{\varepsilon_0}{2}. $$
\end{lemma}

Note that Lemma \ref{l:epsilon_0_level} implies \eqref{eq:epsilon_0_2}.

\begin{proof}(Lemma \ref{l:epsilon_0_level})
	
	Let $u(t)$ be either $u_1(t)$ or $u_2(t).$ From \eqref{eq:linear_error2}
	\begin{align*}
		\left\|u(t^*)-v(t^*) \right\|_2 \leq \frac{\beta C}{\theta \alpha_1} \left(\| u(0)\|_2e^{ \alpha_1 t^*} \right)^{1+\theta}=O \left(\left(\varepsilon' \frac{\varepsilon_0}{\varepsilon'} \right)^{1+\theta} \right)=O\left(\varepsilon_0^{1+\theta}\right),
	\end{align*}
	making $u(t^*)=v(t^*)+O\left(\varepsilon_0^{1+\theta} \right).$
	
	\begin{align*}
		v(t^*)=&e^{\mathcal{A}t^*}u(0)=e^{\mathcal{A}t^*}\left(\varepsilon'\varphi_1+O\left((\varepsilon')^{1+\theta}\right) \right)\overset{e^{\mathcal{A}t}\varphi_1=e^{\alpha_1 t}\varphi_1}{=}\varepsilon_0\varphi_1+e^{\mathcal{A}t^*}O\left((\varepsilon')^{1+\theta}\right)) \\
		\overset{\|e^{\mathcal{A}t}\|_2=e^{\alpha_1 t}}{=}&\varepsilon_0\varphi_1+ O\left(e^{\alpha_1 t^*}(\varepsilon')^{1+\theta}\right)=\varepsilon_0\varphi_1+O\left(\varepsilon_0 (\varepsilon')^{\theta}\right)\\
		\overset{\varepsilon' \leq \varepsilon_0}{=}&\varepsilon_0\varphi_1+O\left(\varepsilon_0^{1+\theta}\right).
	\end{align*}
\end{proof}

Thus, \eqref{eq:epsilon_0_1} holds as well concluding the proof of Claim \ref{c:epsilon_0}.

\subsubsection{Getting $\varepsilon$-close to the endemic state}
\label{s:close_to_endemic}

We shift the time so that $\|u_1(0)-u_2(0) \|_2=O\left( (\varepsilon')^{\theta} \right)$ and $u_1(0),u_2(0)=\varepsilon_0 \varphi+O\left(\varepsilon_0^{1+\theta} \right).$

In this section we want to get the two solutions $u_1,u_2$ $\varepsilon$-close to the endemic state $\psi$. The deviations are denoted by $\tilde{u}_i(t):=u_i(t)-\psi$ ($i \in \{1,2\}$).

By lemma \ref{l:elsilon_0_error} the error until some time $t^{**}$ can be upper bounded as
\begin{align}
\label{eq:error_accumulation}
\sup_{0 \leq t \leq t^{**}}\|u_1(t)-u_2(t) \|_2 =O \left((\varepsilon')^{\theta}e^{\alpha_1 t^{**}} \right)
\end{align}  

which dependence on $t^{**}$. 
The question is, can we set a $t^{**}=t^{**}(\varepsilon)$ (but independent of $\delta,\varepsilon'$) such that $u_1,u_2$ are already close to equilibrium?

\begin{lemma}
\label{l:convergence_to_equilibrium}
If $\gamma>0$
\begin{align}
\label{eq:convergence_to_equilibrium}
\frac{\d}{\d t} \| \tilde{u}(t) \|_\pi^2 \leq -2\beta \int_{0}^{1} \pi(x) \left( \tilde{u}(t,x) \right)^2 \W u(t,x) \d x,
\end{align}
if $\gamma=0$
\begin{align}
	\label{eq:convergence_to_equilibrium'}
	\frac{\d}{\d t} \| \tilde{u}(t) \|_2^2 \leq -2\beta \int_{0}^{1}  \left( \tilde{u}(t,x) \right)^2 \W u(t,x) \d x.
\end{align}
\end{lemma}

\begin{corollary}
	Claim \ref{c:monotonicity}  is an easy consequence of Lemma \ref{l:convergence_to_equilibrium}. 
\end{corollary}

\begin{proof}(Lemma \ref{l:convergence_to_equilibrium})
	
	We start with the $\gamma=0$ case.
	\begin{align*}
	\partial_t \tilde{u}=&\partial_t (u-1)=\partial_t u=\beta(1-u)\W u=-\beta \left(\W u \right) \tilde{u} \\
	\frac{\d}{\d t} \|\tilde{u}(t) \|_2^2=& 2 \langle \partial_t \tilde{u}(t), \tilde{u}(t) \rangle=-2\beta \int_{0}^{1}  \left( \tilde{u}(t,x) \right)^2 \W u(t,x) \d x 
	\end{align*}
	
	Now assume $\gamma>0$.
	
	From \cite[Proposition 4.13.]{SISdyn} we know that the operator $\mathcal{B}=\beta(1-\psi)\W$ has spectral radius $\gamma$ with $\psi$ as the corresponding Perron-Frobenius eigenfunction. This means $\Lambda:=\mathcal{B}-\gamma \mathbb{I}$ has negative eigenvalues. Note that $\mathcal{B}$ is bounded in $L^2_{\pi}([0,1])$ since
	\begin{align*}
		\|\mathcal{B}f \|_{\pi}^2=&\int_{0}^{1} \pi(x) \beta^2 (1-\psi(x))^2 \left(\W f(x) \right)^2 \d x  \\
		\leq& \beta^2 \|\W f \|_2^2 \leq \beta^2 \|\W \|_2^2 \|f \|_2^2 \leq \beta^2 \|\W \|_2^2 \|f \|_\pi^2. \Rightarrow \|\mathcal{B} \|_\pi \leq \beta \|W \|_2.
	\end{align*}
	Furthermore, $\mathcal{B}$ is symmetric: in $L^2_{\pi}([0,1])$:
	\begin{align*}
	\langle \mathcal{B}f, g \rangle_{\pi}= \langle \beta \W f, g \rangle=	\langle  f, \beta \W g \rangle=\langle f, \mathcal{B}g \rangle_{\pi}.
	\end{align*}
	This makes $\Lambda$ selfadjoint in $L^2_\pi([0,1])$ with non-positive eigenvalues implying it is negative semi-definite.
	
	Next, we calculate the time derivative of $\tilde{u}.$
	\begin{align*}
		\partial_t \tilde{u}=&\partial_t (u-\psi)=\partial_t u=\beta(1-u)\W u-\gamma u\\
		=& \beta(1-\psi-\tilde{u})\W u-\gamma u=\Lambda u-\beta \left( \W u \right)\tilde{u}
	\end{align*}	
	Note that $\Lambda \psi=0$ implies $\Lambda u= \Lambda (\tilde{u}+\psi)=\Lambda \tilde{u}$, therefore, we end up with the expression
	\begin{align*}
		\partial_t \tilde{u}=\Lambda \tilde{u}-\beta \left( \W u \right) \tilde{u}.
	\end{align*}
	
	Finally, we give an upper bound on the derivative  of $\| \tilde{u}(t) \|_2^2.$
	\begin{align*}
		\frac{\d}{\d t} \| \tilde{u}(t) \|_\pi^2=& 2 \left \langle  \partial_t \tilde{u}(t),\tilde{u}(t) \right \rangle_{\pi}=2\left \langle \Lambda \tilde{u}(t),\tilde{u}(t) \right \rangle_\pi-2\beta\left \langle  (\W u(t)) \tilde{u}(t),\tilde{u}(t) \right \rangle_{\pi} \\
		\leq & -2\beta\left \langle  (\W u(t)) \tilde{u}(t),\tilde{u}(t) \right \rangle_{\pi}=-2 \beta \int_{0}^{1} \pi(x) \left( \tilde{u}(t,x) \right)^2 \W u(t,x) \d x
	\end{align*}
\end{proof}

\begin{remark}
\label{r:epsilon}
Assume $\gamma>0$. Since $u_1(0),u_2(0)$ have already reached the macroscopic level $\varepsilon_0$ we hope that some $\tilde{\varepsilon}_0>0$ exists such that $\W u(t,x) \geq \tilde{\varepsilon}_0,$ or in other words, every susceptible individual gets infected with a uniformly positive rate. Thus, by \eqref{eq:convergence_to_equilibrium} and Grönwall's lemma
\begin{align*}
\frac{\d}{\d t} \| \tilde{u}(t) \|_{\pi}^2 \leq& -2\beta \tilde{\varepsilon}_0 \int_{0}^{1} \pi(x) \left( \tilde{u}(t,x) \right)^2  \d x=-2\beta \tilde{\varepsilon}_0 \|\tilde{u}(t) \|_\pi^2 \\
\| \tilde{u}(t^{**}) \|_2 \leq & \| \tilde{u}(t^{**}) \|_\pi  \leq  e^{-\beta \tilde{\varepsilon}_0 t^{**}} \| \tilde{u}(0) \|_\pi \leq 2 \sqrt{\| \pi \|_1} e^{-\beta \tilde{\varepsilon}_0 t^{**}}=\frac{\varepsilon}{2}
\end{align*}
for
\begin{align}
\label{eq:t**}
t^{**}:=\frac{1}{\beta \tilde{\varepsilon}_0} \log \frac{\varepsilon}{4 \sqrt{\| \pi \|_1}}.
\end{align}

Note that, for this choice of $t^{**}$ the right hand side of \eqref{eq:error_accumulation} becomes dependent on $\varepsilon.$

Similarly, when $\gamma=0$ one has
\begin{align*}
	\|\tilde{u}(t^{**}) \|_2 \leq 2  e^{-\beta \tilde{\varepsilon}_0 t^{**}}=\frac{\varepsilon}{2} 
\end{align*}
for
\begin{align}
	\label{eq:t**'}
	t^{**}:=\frac{1}{\beta \tilde{\varepsilon}_0} \log \frac{\varepsilon}{4 }.
\end{align}
\end{remark}

The following lemma provides the existence of such $\tilde{\varepsilon}_0.$

\begin{lemma}
\label{l:epsilon_0_tilde}
Assume a), b) or c). Also, assume $u(0)=\varepsilon_0 \varphi_1+O \left(\varepsilon_0^2 \right)$ for some small enough $\varepsilon_0$. Then there exists an $\tilde{\varepsilon}_0>0$ such that for all $t \geq 0, \ x \in [0,1] $,
 $$\W u(t,x) \geq \tilde{\varepsilon}_0.$$
\end{lemma}

The proof of Lemma \ref{l:epsilon_0_tilde} will be handled in three different \emph{cases} according to whether we assumed a), b) or c) referred to as Case 1-3 respectively.

For Case 1, first we show that $u(t)$ can be bounded from below by a curve which is increasing and uniformly positive. The following lemma is a reformulation of \cite[Proposition 4.8.]{SISdyn}.

\begin{lemma}
	\label{l:monotone_lower_bound}
	Assume $\lambda_1 \beta>\gamma$ and $\varphi_1$ is bounded. Define $\varphi_\omega:=\omega \varphi_1$ and let $u_{\omega}$ be a solution to \eqref{eq:u} with initial condition $u_{\omega}(0)=\varphi_\omega.$
	
	Then there is a small $\omega_0>0$ such that for all $0<\omega \leq \omega_0$ $\varphi_\omega\in \Delta$ and $u_{\omega}(t,x)$ is monotone increasing in $t \geq 0.$
\end{lemma}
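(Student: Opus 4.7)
My plan is to reduce the result to the cooperative/monotonicity property of the PDE already recorded in the paper (the analogue of \cite[Proposition~2.7 (iv)]{SISdyn}) via the observation that $\varphi_\theta$ is a time-independent subsolution.

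\textbf{Step 1 (domain).} Since $\varphi_1 \geq 0$ a.e., is bounded, and $\beta \lambda_1 > \gamma$, set
\begin{align*}
\theta_0 := \frac{\beta \lambda_1 - \gamma}{\beta \lambda_1 \|\varphi_1\|_\infty}.
\end{align*}
For $0 < \theta \leq \theta_0$ one has $0 \leq \varphi_\theta(x) \leq 1 - \gamma/(\beta\lambda_1) < 1$ a.e., so $\varphi_\theta \in \Delta$.

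\textbf{Step 2 (subsolution).} Using $\W \varphi_1 = \lambda_1 \varphi_1$, I would compute the right hand side of \eqref{eq:u} at the constant-in-time trajectory $u \equiv \varphi_\theta$:
\begin{align*}
\beta(1-\varphi_\theta)\W \varphi_\theta - \gamma \varphi_\theta
= \theta \varphi_1 \bigl[\beta \lambda_1 (1 - \theta\varphi_1) - \gamma \bigr] \geq 0 \quad \text{a.e.,}
\end{align*}
where the last inequality uses $\theta \|\varphi_1\|_\infty \leq 1 - \gamma/(\beta \lambda_1)$ from Step 1. In particular, the constant trajectory $\underline{u}(t) \equiv \varphi_\theta$ is a (weak) subsolution of \eqref{eq:u}.

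\textbf{Step 3 (monotonicity).} Using the subsolution inequality $\partial_t \underline{u} = 0 \leq \beta(1-\underline{u})\W \underline{u} - \gamma \underline{u}$ together with $\underline{u}(0) = u_\theta(0) = \varphi_\theta$, the sub-/supersolution comparison principle for the cooperative nonlinearity on the right hand side of \eqref{eq:u} yields
\begin{align*}
u_\theta(t,x) \geq \varphi_\theta(x) \quad \text{a.e. } x, \ \text{for all } t \geq 0.
\end{align*}
In particular, for every $h > 0$ we have $u_\theta(h) \geq u_\theta(0)$ a.e. Applying \cite[Proposition~2.7 (iv)]{SISdyn} (cooperativity) to the two solutions $t \mapsto u_\theta(t)$ and $t \mapsto u_\theta(t+h)$ of \eqref{eq:u}, whose initial data satisfy $u_\theta(0) \leq u_\theta(h)$, gives $u_\theta(t) \leq u_\theta(t+h)$ for all $t,h \geq 0$, which is the desired monotonicity.

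\textbf{Main obstacle.} The only step not stated verbatim in the excerpt is the subsolution comparison used in Step 3, which is a mild extension of \cite[Proposition~2.7 (iv)]{SISdyn} from solutions to subsolutions. I expect this to be handled by exactly the same Grönwall argument: setting $w(t) := (\varphi_\theta - u_\theta(t))^+$ and differentiating $\|w(t)\|_2^2$, one uses positivity of $\W$ together with the Lipschitz estimate of the nonlinearity on $\Delta$ to obtain $\frac{\d}{\d t}\|w(t)\|_2^2 \leq C \|w(t)\|_2^2$ with $w(0) = 0$, hence $w \equiv 0$. All remaining computations are routine.
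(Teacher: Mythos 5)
Your proof is correct and follows essentially the same route as the paper: choose $\theta_0$ so that $\varphi_\theta\in\Delta$ and the right-hand side of \eqref{eq:u} is nonnegative at $\varphi_\theta$ (using that $\varphi_\theta$ is an eigenvector of $\W$), then invoke a subsolution/monotonicity comparison. The paper compresses your Step~3 into a single citation of \cite[Proposition~2.12]{SISdyn}, which is precisely the statement ``initial data with $F(u_0)\ge 0$ a.e. yield a time-monotone solution''; your reconstruction of it from cooperativity plus a subsolution comparison is a correct unpacking of that cited result, and your identification of the one lemma not quoted verbatim in the excerpt is exactly the right thing to flag. (Minor nit: when $\gamma=0$ the bound $1-\gamma/(\beta\lambda_1)$ equals $1$, so the strict inequality ``$<1$'' in Step~1 should be ``$\le 1$'', which still suffices for $\varphi_\theta\in\Delta$.)
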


\begin{proof}(Lemma \ref{l:monotone_lower_bound})

	Since we are in the supercritical regime we can set a small $\epsilon>0$ such that $\mu:=\beta(1-\epsilon)\lambda_1-\gamma \geq 0.$ For such $\epsilon$ we can set a $\omega_0$ such that for all $0<\omega \leq \omega_0$ $0 \leq \varphi_{\omega}(x) \leq \epsilon$ making $\varphi_{\omega} \in \Delta.$
	
	Since $\varphi_{\omega}$ is an eigenvector of $\W$ with eigenvalue $\lambda_1$ we have
	\begin{align*}
		\beta(1-\epsilon)\W \varphi_{\omega}=\beta(1-\epsilon)\lambda_1 \varphi_{\omega}=(\gamma+\mu)\varphi_{\omega},
	\end{align*}
	implying
	\begin{align*}
		0 \leq \mu \varphi_{\omega}=\beta(1-\epsilon)\W \varphi_{\omega}-\gamma \varphi_{\omega} \leq  \beta(1-\varphi_{\omega})\W \varphi_{\omega}-\gamma \varphi_{\omega}.
	\end{align*}
	
	\cite[Proposition 2.12.]{SISdyn} implies $u_{\omega}(t)$ is increasing in $t \geq 0.$
\end{proof}

\begin{proof} (Case 1 of Lemma \ref{l:epsilon_0_tilde})
	
	For $x \in I_i$
	\begin{align*}
		u(0,x)= \frac{1}{|I_i|} \int_{I_i} u(0,y) \d y=\varepsilon_0 \phi_{i}^{(1)} +O \left( \varepsilon_0^2 \right) \geq \frac{\varepsilon_0}{2}\phi_{i}^{(1)}
	\end{align*}
	for small enough $\varepsilon_0$ uniformly in $i$. Therefore, $u(0,x) \geq \frac{\varepsilon_0}{2} \varphi_1(x).$
	
	In the discrete case,
	$$\|\varphi_1 \|_{\infty} = \max_{1 \leq i \leq n} \phi_{i}^{(1)}<\infty,$$
	making Lemma \ref{l:monotone_lower_bound} applicable. Set $\omega:=\min \{\frac{\varepsilon_0}{2},\omega_0\}.$ This makes $u(0) \geq u_{\omega}(0),$ hence $u(t) \geq u_{\omega}(t)$ for $t \geq 0$ as \eqref{eq:u} is cooperative (see \cite[Proposition 2.7.]{SISdyn}), and
	\begin{align*}
		\W u(t,x) \geq \W u_{\omega}(t,x) \geq \W u_{\omega}(0,x)=\W \varphi_{\omega}=\lambda_{1} \varphi_{\omega} \geq \lambda_1 \omega m=: \tilde{\varepsilon}_0>0.
	\end{align*}
	
\end{proof}

\begin{proof}(Case 2 of Lemma \ref{l:epsilon_0_tilde})
	
	Since $W$ is uniformly positive,
	\begin{align*}
		\W u(t,x)=&\int_{0}^{1} W(x,y)u(t,y) \d y \geq m_0 \int_{0}^{1}u(t,y) \d y \geq m_0 \int_{0}^{1}u^2(t,y) \d y  \\
		=& m_0 \|u(t) \|_2^2=m_0 \left( \|\varphi_1 \|_2 \|u(t) \|_2 \right)^2 \geq m_0 \langle \varphi_1, u(t) \rangle^2 =m_0 c_1^2(t),
	\end{align*}	
	thus, it is enough to have a lower bound for $c_1(t).$
	
	Since $u(0)=\varepsilon_0 \varphi_1+O\left(\epsilon_0^2 \right)$ one has $c_1(0) \geq \frac{\varepsilon_0}{2}$ for small enough $\varepsilon_0.$
	
	We will show that $c_1(t)$ can not go below a certain small positive value after reaching such level.
	
	Recall Lemma \ref{l:u_and_v}. From \eqref{eq:u_and_v}, taking the scalar product of both sides with $\varphi_1$ gives
	\begin{align*}
		c_1(t)=c_1(0)e^{\alpha_1 t}-\beta \int_{0}^{t}e^{\alpha_1(t-s)}\langle u(s) \W u(s), \varphi_1 \rangle \d s.
	\end{align*}
	
	The derivative is
	\begin{align}
		\label{eq:diff_c1}
		\begin{split}
			\frac{\d}{\d t}c_1(t)=&\alpha_1 c_1(0)e^{\alpha_1 t}-\beta \langle u(t) \W u(t), \varphi_1 \rangle-\alpha_1 \int_{0}^{t}e^{\alpha_1(t-s)}\langle u(s) \W u(s), \varphi_1 \rangle \d s \\
			=& \alpha_1 c_1(t)-\beta  \langle u(t) \W u(t), \varphi_1 \rangle.
		\end{split}	
	\end{align}
	
	Using \eqref{eq:Wu_upper_bound}
	\begin{align*}
		\langle u(t) \W u(t), \varphi_1 \rangle \leq  \frac{M}{m}c_1(t) \langle u(t) , \varphi_1 \rangle=\frac{M}{m}c_1^2(t),
	\end{align*}
	which gives
	\begin{align*}
		\frac{\d}{\d t} c_1(t) \geq c_1(t) \left( \alpha_1-\frac{\beta M}{m}c_1(t) \right).
	\end{align*}
	Assuming $\frac{\varepsilon_0}{2} \leq \frac{m \alpha_1}{\beta M} $ implies $c_1(t) \geq \frac{\varepsilon_0}{2}$ for all $t \geq 0$.		
\end{proof}

\begin{proof}(Case 3 of Lemma \ref{l:epsilon_0_tilde})
	
	Since $W$ is rank-$1$,
	\begin{align*}
		\W u(t,x)=\lambda_1 c_1(t) \varphi_1(x) \geq \lambda_1 m c_1(t),
	\end{align*}
	hence, it is enough to get a lower bound on $c_1(t)$.For this, we will use \eqref{eq:diff_c1} once again. As $Wu(t,x)=\lambda_1 c_1(t) \varphi_1(x)$ \eqref{eq:diff_c1} takes the form
	\begin{align*}
		\frac{\d}{\d t}c_1(t)=c_1(t) \left(\alpha_1-\beta \lambda_1 \left \langle u(t), \varphi_1^2 \right \rangle  \right).
	\end{align*} 
	
	Choose $K$ large enough so that $ \int_{\varphi_1(x) \geq K} \varphi_1^2(x ) \d x \leq \frac{\alpha_1}{2 \beta \lambda_1}.$
	\begin{align*}
		\beta \lambda_1 \left \langle u(t), \varphi_1^2 \right \rangle \leq \beta \lambda_1 \left \langle \1{\varphi_1 \geq K }, \varphi_1^2 \right \rangle +\beta \lambda_1 K \langle u(t), \varphi_1 \rangle  \leq \frac{\alpha_1}{2} +\beta \lambda_1 Kc_1(t),
	\end{align*}
	resulting in
	\begin{align*}
		\frac{\d}{\d t}c_1(t) \geq c_1(t) \left(\frac{\alpha_1}{2}-\beta \lambda_1 K c_1(t)  \right).
	\end{align*}
	
	The rest is similar to Case 2.
\end{proof}

Remark \ref{r:epsilon} with Lemma \ref{l:epsilon_0_tilde} shows \eqref{eq:epsilon_1} while \eqref{eq:error_accumulation} is the same as \eqref{eq:epsilon_2} concluding the proof of Claim \ref{c:epsilon_close}.

\subsection{Proofs regarding eternal solutions}
\label{s:proof_eternal}
\begin{proof}(Theorem \ref{t:existence})
	
	Set a small enough $\varepsilon_0>0$ whose value is chosen later. Also, define $\varepsilon_n:=\varepsilon_0 e^{-\alpha_1 n}.$ We will often use the identity
	$$e^{\alpha_1 n}= \frac{\varepsilon_0}{\varepsilon_n}. $$
	
	Let $(u_n)_{n=1}^{\infty}$ be a set of solutions with initial conditions
	\begin{align*}
		u_n(-n):=\min\{\varepsilon_n \varphi_1,1\}=\varepsilon_n \left(\varphi_1 -\left(\varphi_1-\frac{1}{\varepsilon_n}\right) \1{\varphi_1 \geq \frac{1}{\varepsilon_n}} \right)=:\varepsilon_n\left(\varphi_1-\eta_n \right).
	\end{align*}
	
	Clearly,
	\begin{align*}
		\| \eta_n \|_2^2=\int_{\varphi_1 \geq \frac{1}{\varepsilon_n}} \left(\varphi_1(x)-\frac{1}{\varepsilon_n}\right)^2 \d x 
	\end{align*}
	is monotone decreasing. To get a rate of convergence as $n \to \infty$, define the random variable $\xi$ on $[0,1]$ with density function $\varphi_1^2(x).$ Due to Markov's inequality,
	
	\begin{align*}
		\|\eta_{n} \|_2^2 \leq &\int_{\varphi_1 \geq \frac{1}{\varepsilon_n}} \varphi_1^2(x) \d x\\
		=& \pr \left(\varphi_1(\xi) \geq \frac{1}{\varepsilon_n}\right)=\pr \left(\varphi_1^{\rho}(\xi) \geq \varepsilon_n^{-\rho} \right) \leq \varepsilon_n^{\rho} \E \left( \varphi_1^{\rho}(\xi) \right)\\
		=& \varepsilon_n^{\rho} \int_{0}^{1}\varphi_1^{2+\rho}(x) \d x=O \left(\varepsilon_n^{\rho}\right) \\
		\|\eta_{n} \|_2=& O \left(\varepsilon_n^{\frac{\rho}{2}}\right).
	\end{align*}
	
	We will apply a Cauchy argument to show $u_n(t) \to u(t)$ for some $u$ which end up being a nontrivial eternal solution. Furthermore, for large enough $k_0 $ (and $n$) we have $u_n(-k_0) \approx \varepsilon_{k_0} \varphi_1.$
	
	Set $k_0$  and let $n,k$ be $n \geq k \geq k_0.$ 
	
	With a slight modification of \eqref{eq:linear_error2} in Lemma \ref{l:linear_error},

	\begin{align*}
		\sup_{0 \leq t \leq n-k_0}\|u_n(t-n)-v_n(t-n) \|_2 \leq& \frac{\beta C}{\theta\alpha_1}\|u_n(-n) \|_2^{1+\theta}e^{(1+\theta) \alpha_1(n-k_0)} \\
		\leq& \frac{\beta C}{\theta\alpha_1}\varepsilon_n^{1+\theta}e^{(1+\theta) \alpha_1(n-k_0)}= \frac{\beta C}{\theta \alpha_1}\varepsilon_{k_0}^{1+\theta}.	
	\end{align*}

	Define $c_l^{(n)}(t):=\langle \varphi_l, u_n(t) \rangle.$ To approximate $v_n(-k_0)$ first notice
	\begin{align*}
		c_l^{(n)}(-n)=\varepsilon_n\left(\delta_{1,l}- \langle \varphi_l,\eta_n \rangle \right),
	\end{align*}
	hence \eqref{eq:v_expanded} shows
	\begin{align*}
		v_n(-k_0)=&\sum_{l}c_l^{(n)}(-n)e^{\alpha_l(n-k_0)} \varphi_l=\varepsilon_ne^{\alpha_1(n-k_0)}\left(\varphi_1+O\left(\|\eta_n \|_2 \right) \right)\\
		=&\varepsilon_{k_0}\left(\varphi_1+O\left(\|\eta_{k_0} \| \right)\right) \ \textit{in} \ L^{2}([0,1]).
	\end{align*}
	 Combining the two bounds yields
	
	\begin{align*}
		\|u_n(-k_0)-\varepsilon_{k_0}\varphi_1 \|_2 \leq& \|u_n(-k_0)-v_n(-k_0) \|_2+\|v_n(-k_0)-\varepsilon_{k_0} \varphi_1 \|_2=\\
		&O \left(\varepsilon_{k_0}^{1+\theta}+\varepsilon_{k_0}\|\eta_{k_0} \|_2 \right) =O \left( \varepsilon_{k_0}^{1+\theta'} \right)
	\end{align*}
	where $\theta':=\min \{\theta, \frac{\rho}{2} \}.$

	The same argument can be used for index $k$, resulting in
	\begin{align}
		\label{eq:k_0}
		u_n(-k_0),u_{k}(-k_0)=\varepsilon_{k_0}\varphi_1+O\left( \varepsilon_{k_0}^{1+\theta'} \right) \ \textit{in} \ L^{2}([0,1])
	\end{align}
	which resembles the setup of Subsection \ref{s:epsilon_0}. Indeed, we can use Lemma \ref{l:elsilon_0_error} to show
	\begin{align}
		\label{eq:k_0_2}
		\begin{split}
		\sup_{0 \leq t \leq k_0}\|u_n(t-k_0)-u_k(t-k_0) \|_2  \leq& \|u_n(-k_0)-u_{k}(-k_0) \|_2e^{\alpha_1 k_0}=\\
		&O \left(\varepsilon_{k_0}^{1+\theta'} \frac{\varepsilon_0}{\varepsilon_{k_0}}\right)=O\left(\varepsilon_{k_0}^{\theta'} \right).
		\end{split}
	\end{align}
	Now, fix an arbitrarily large $\tau>0.$  Define the norm on $\mathcal{C}\left([-\tau,0], L^2([0,1]) \right)$ as
	\begin{align*}
		\|u_n-u_k \|_{[-\tau,0]}:=\sup_{-\tau \leq t \leq 0 } \|u_n(t)-u_k(0) \|_2.
	\end{align*}
	$\tau \leq  k_0$ for large enough $k_0$, hence \eqref{eq:k_0_2} yields
	\begin{align}
		\label{eq:Cauchy}
		\|u_n-u_k \|_{[-\tau,0]}=O\left(\varepsilon_{k_0}^{\theta'} \right) \to 0
	\end{align}
	as $k_0\to\infty$, making $(u_n)_{n=1}^{\infty}$ a Cauchy sequence for this norm, with limit $u$.
	
	Note that \eqref{eq:k_0} is also applicable for $k_0=0$ making $u_n(0)=\varepsilon_0 \varphi_1+O\left(\varepsilon_{0}^{1+\theta'} \right)$ in $L^2([0,1]).$ Choosing $\varepsilon_0$ small enough shows
	$$\frac{\varepsilon_0}{2} \|u_n(0) \|_2 \leq 2 \varepsilon_0, $$
	hence, $u$ can not be neither the disease-free nor the endemic state.
	
	$u(t)$ inherits being in $\Delta$ and satisfies \eqref{eq:u}, so it can be extended to $[-\tau, \infty[.$ To extend backwards in time, we consider $\tau'>\tau>0$ with corresponding limit $u'.$ Clearly $\left.u' \right|_{[-\tau,0]}=u$, otherwise $u_n(t) \to u(t) $ would not hold for $-\tau \leq t \leq 0.$ In conclusion, $u(t)$ can be extended to $\mathbb{R}$ as well making it a nontrivial eternal solution.
	
	When $W$ is discrete, $\varepsilon_0$ can be chosen small enough so that $u_n(-n)=\varepsilon_n \varphi_1 \in \Delta_I$ which property is also inherited by $u(t)$.
	
	Finally, we have to show \eqref{eq:small_eternal}. 
	
	Define
	$$\varepsilon_t:=\varepsilon_0e^{-\alpha_1 t}. $$
	
	Note that in the proof of \eqref{eq:k_0} we did not use the fact that $k_0$ is an integer -- $u_n$ is well defined for non-integer $n$ too -- hence, we also have
	\begin{align*}
		u_n(-t)=\varepsilon_t \varphi_1+O \left(\varepsilon_t^{1+\theta'} \right)  \ \textit {in } L^2([0,1]).
	\end{align*}

	This means
	\begin{align*}
		1 \geq \frac{\langle \varphi_1, u_n(-t) \rangle}{\|u_n(-t) \|_2}=\frac{\varepsilon_t+O \left(\varepsilon_t^{1+\theta'} \right)}{\varepsilon_t+O \left(\varepsilon_t^{1+\theta'} \right)} \geq 1-O \left(\varepsilon_{t}^{\theta'} \right)
	\end{align*}
	so after taking $n \to \infty$
	\begin{align*}
			1 \geq \frac{\langle \varphi_1, u(-t) \rangle}{\|u(-t) \|_2} \geq 1-O \left(\varepsilon_{t}^{\theta'} \right) \to 1.
	\end{align*}
\end{proof}

\begin{proof}(Lemma \ref{l:convergene_to_0})
	
	Let $u(t)$ be a nontrivial eternal solution and $\tilde{u}(t):=u(t)-\psi.$ Using Lemma \ref{l:convergence_to_equilibrium} for $-t \leq 0$,
	\begin{align*}
		4 \|\pi \|_1 \geq \| \tilde{u}(-t) \|_{\pi}^2 =&\|\tilde{u}(0) \|_{\pi}^2+\int_{-t}^{0} \left(- \frac{\d}{\d t} \|\tilde{u}(s) \|_{\pi}^2 \right) \d s \\
		\geq &\|\tilde{u}(0) \|_2^{\pi}+2 \beta \int_{-t}^{0}  \int_{0}^{1} \pi(x) \left(\tilde{u}(s,x)\right)^2 \W u(s,x) \d x \d s \\
		\geq & 2 \beta \int_{-t}^{0}  \int_{0}^{1}  \left(\tilde{u}(s,x)\right)^2 \W u(s,x) \d x \d s
	\end{align*}
	when $\gamma>0$ and
	\begin{align*}
		4 \geq & 2 \beta \int_{-t}^{0}  \int_{0}^{1}  \left(\tilde{u}(s,x)\right)^2 \W u(s,x) \d x \d s
	\end{align*}
	when $\gamma=0$.
	
	Since $\left(\tilde{u}(s,x)\right)^2 \W u(s,x) \geq 0$, the right hand side is monotone increasing in $t$ and bounded, hence
	\begin{align*}
		\int_{-\infty}^{0}  \int_{0}^{1} \left(\tilde{u}(s,x)\right)^2 \W u(s,x) \d x \d s<\infty,
	\end{align*}
	implying
	\begin{align*}
		\lim_{t \to -\infty} \int_{0}^{1} \left(\tilde{u}(t,x) \right)^2 \W u(t,x) \d x=0.
	\end{align*}
	
	Due to
	\begin{align*}
		\int_{0}^{1} \left(\tilde{u}(s,x)\right)^2 \W u(s,x) \d x \leq 4\int_{0}^{1}\int_{0}^{1}W(x,y) \d x \d y <\infty,
	\end{align*}
	we can take the limit inside:
	\begin{align*}
		\lim_{t \to -\infty}\left(\tilde{u}(t,x)\right)^2 \W u(t,x)=0.
	\end{align*}
	
	Define
	\begin{align*}
		u(-\infty,x):=\limsup_{t \to -\infty}u(t,x), \\
		A:=\{ x \in [0,1] \left| \W u(-\infty,x)=0 \right.\}.
	\end{align*}
	Clearly $u(-\infty,x)=\psi(x)$ for almost every $x \in A^c.$
	
	Note that, $A^{c}$ can not have measure $1$ as $u(0) \neq \psi$ and
	\begin{align*}
		\|\tilde{u}(-t) \|_\pi^2  \geq \|\tilde{u}(0) \|_\pi^2>0,
	\end{align*}
	when $gamma>0$ and
	\begin{align*}
		\|\tilde{u}(-t) \|_2^2  \geq \|\tilde{u}(0) \|_2^2>0,
	\end{align*}
	 when $\gamma=0$.
	
	Assume $A^c$ has measure strictly between $0$ and $1$. From the connectivity property \eqref{eq:W_connectivity} we have that
	$$D:=\int_{A} \int_{A^c}W(x,y) \d y \d x>0. $$
	
	When $W(x,y) \geq m_0>0$, we have
	\begin{align*}
		\psi(x)=\frac{\beta \W \psi(x)}{\gamma+\beta \W \psi(x)} \geq \frac{\beta m_0 \|\psi \|_1}{\gamma+\beta m_0 \|\psi\|_1 }>0.
	\end{align*}
	When $W$ is discrete,
	\begin{align*}
		\min_{x \in [0,1]}\psi(x)>0.
	\end{align*}
	based on \cite{NIMFA_bifurcation}, hence, in all cases we may assume $\psi$ is uniformly positive with lower bound $\psi_{\min}>0.$

	\begin{align*}
		0=&\int_{A} \W u(-\infty, x) \d x=\int_{A}\int_{0}^{1} W(x,y) u(-\infty,y) \d y \d x  \\
		\geq & \int_{A}\int_{A^c} W(x,y) u(-\infty,y) \d y \d x=\int_{A}\int_{A^c} W(x,y) \psi(y) \d y \d x \\
		\geq& \psi_{\min}  \int_{A}\int_{A^c} W(x,y)   \d y \d x = \psi_{\min} D>0,
	\end{align*}
	which leads to a contradiction.
	
	What remains is to show $\W u(-\infty)=0$ a.e. implies $u(-\infty)=0$ a.e. Define
	$$B_{\epsilon}:=\{ x \in [0,1] \left| u(-\infty,x) \geq \epsilon \right.\}. $$
	
	It is enough to show that $B_{\epsilon}$ has zero measure as $B_{\frac{1}{n}}$ are increasing sets. Note that $B_{\epsilon}$ can not have measure $1$ as that would lead to
	\begin{align*}
		0=\int_{0}^{1} \W u(-\infty,x) \d x \geq \epsilon \int_{0}^{1}\int_{0}^{1}W(x,y) \d y \d x>0.
	\end{align*}
	
	Assume $B_{\epsilon}$ has measure between $0$ and $1$. Then
	\begin{align*}
		0=&\int_{B_{\epsilon}^c} \W u(-\infty,x) \d x \geq \int_{B_{\epsilon}^c}\int_{B_{\epsilon}}W(x,y) u(-\infty,y) \d y \d x\\
		\geq & \epsilon\int_{B_{\epsilon}^c}\int_{B_{\epsilon}}W(x,y)  \d y \d x >0
	\end{align*}
	concluding the proof.
\end{proof}

\begin{proof}(Theorem \ref{t:uniqunes})
	
	Indirectly assume
	$$ \sup_{t \in \mathbb{R}} \inf_{\tau \in \mathbb{R}} \|u_1(t+\tau)-u_2(t) \|_2>0,$$
	that is, there exists a $t$ such that
	$$ \inf_{\tau \in \mathbb{R}} \|u_1(t+\tau)-u_2(t) \|_2>0. $$	
	
	With appropriate time translation we get 
	\begin{align}
		\label{eq:uniquenes_indirect}
		\inf_{\tau \in \mathbb{R}} \|u_1(\tau)-u_2(0) \|_2=:2 \varepsilon>0.
	\end{align}
	
	Set $\eta=\frac{\varepsilon}{2}$ and choose a corresponding $\delta>0$ according to Proposition \ref{t:main}.
	
	Due to Lemma \ref{l:convergene_to_0}, $\lim_{t \to -\infty}u_i(t)=0$ a.e. ($i=1,2$), hence, for $T \geq 0$ large enough,
	$$0<\|u_1(-T) \|_2, \|u_2(-T) \|_2 \leq \delta,$$
	meaning there must be times $t_1,t_2 \geq 0$ such that
	\begin{align*}
		\sup_{t \geq 0} \|u_1(t+t_1-T)-u_2(t+t_2-T) \|_2 \leq & \varepsilon, \\
		\sup_{0 \leq t \leq t_2} \|u_2(t-T) \|_2 \leq & \frac{\varepsilon}{2}.
	\end{align*}
	
	$T>t_2$ would give $\|u_1(t_1-t_2)-u_2(0) \|_2 \leq \varepsilon,$ violating \eqref{eq:uniquenes_indirect} with $\tau=t_1-t_2.$
	
	So $T \in [0,t_2]$, and then $\|u_2(0) \|_2=\|u_2(T-T) \| \leq \frac{\varepsilon}{2}. $
	
	Set $\tau$ to be a large negative number such that $\|u_1(\tau) \|_2 \leq \varepsilon.$ However, that implies 
	\begin{align*}
		2 \varepsilon \leq & \|u_1(\tau)-u_2(0) \|_2 \leq \|u_1(\tau) \|_2+\|u_2(0) \|_2 \leq \varepsilon+\|u_2(0) \|_2, \\
		\varepsilon \leq & \|u_2(0) \|_2,
	\end{align*}
	resulting in a contradiction.
\end{proof}

\section{Outlook}

In this paper we investigated the deterministic SIS process in general communities described by graphons, starting from small initial conditions. We have shown that after appropriate time translation, the solutions will be close to each other and identified their limit as the nontrivial eternal solution. This results in a huge reduction of complexity as one can neglect the exact initial configuration of infections as it can be well-approximated by the scaled version of the eigenvector centrality.

\rem{There are many questions left open for future work. Firstly, it seems reasonable to extend USIC to other compartmental models such as SIR or SEIR as the main ideas during the linearization phase is the same for these models. The main challenge is to show that convergence to the zero infection state is uniform in some sense. Although, the stationary solution is no longer unique, the differences are expected to be small if the curves stayed close together beforehand.

Secondly, lots of questions are remained open regarding the quantitative properties of the properties of eternal solution. Even innocent looking questions such is whether the eternal solution depends continuously on the graphon $W$ is yet to be answered.

Furthermore, it would be interesting to study USIC for sparse stochastic systems on an infinite graph, say the infinite $d$-regular tree and see what is their relation to the finite counterparts on random $d$-regular graphs. It is unclear at the moment whether it would be possible to define a non-trivial eternal contact process.

Lastly, it would be important to have a better understanding how small $\delta$ is required to be for different $W$, how practical USIC really is when it comes to real world applications.
}

\section*{Acknowledgment}
The author is thankful to Ill\'es Horv\'ath, P\'eter L. Simon and Istv\'an Z. Kiss for insightful discussions.

\label{s:outlook}

\bibliographystyle{abbrv}
\bibliography{USIC7bib}

\end{document}